\newcommand*{\mailto}[1]{\href{mailto:#1}{\nolinkurl{#1}}}
\newcommand{\arxiv}[1]{\href{http://arxiv.org/abs/#1}{arXiv:#1}}
\newcommand{\R}{{\bbR}}
\newcommand{\C}{{\mathbb C}}
\newcommand{\bbC}{{\mathbb{C}}}
\newcommand{\bbF}{{\mathbb{F}}}
\newcommand{\bbN}{{\mathbb{N}}}
\newcommand{\bbR}{{\mathbb{R}}}
\newcommand{\cB}{{\mathcal B}}
\newcommand{\cD}{{\mathcal D}}
\newcommand{\cF}{{\mathcal F}}
\newcommand{\cH}{{\mathcal H}}
\newcommand{\cI}{{\mathcal I}}
\newcommand{\cW}{{\mathcal W}}
\newcommand{\cX}{{\mathcal X}}
\newcommand{\gq}{{\mathfrak{q}}}
\DeclareMathOperator{\supp}{supp}
\DeclareMathOperator{\ran}{ran}
\DeclareMathOperator{\dom}{dom}
\DeclareMathOperator*{\slim}{s-lim}
\DeclareMathOperator*{\sgn}{sgn}
\renewcommand{\Re}{\text{\rm Re}}
\renewcommand{\Im}{\text{\rm Im}}
\renewcommand{\ln}{\text{\rm ln}}
\newcommand{\no}{\notag}
\newcommand{\lb}{\label}
\newcommand{\f}{\frac}
\newcommand{\ol}{\overline}
\newcommand{\wti}{\widetilde}
\newcommand{\Oh}{O}
\newcommand{\oh}{o}
\newcommand{\hatt}{\widehat} 
\newcommand{\bi}{\bibitem}
\newcommand{\dott}{\,\cdot\,}
\renewcommand{\ge}{\geqslant}
\let\geq\geqslant
\let\leq\leqslant
\def\theequation{\@arabic\c@equation}
\numberwithin{equation}{section}
\newtheorem{theorem}{Theorem}[section]
\newtheorem{lemma}[theorem]{Lemma}
\newtheorem{hypothesis}[theorem]{Hypothesis}
\newtheorem{example}[theorem]{Example}
\theoremstyle{remark}
\newtheorem{remark}[theorem]{Remark}
\begin{document}

\numberwithin{equation}{section}
\allowdisplaybreaks

\title[A Bound for the Eigenvalue Counting Function]{A Bound for the Eigenvalue Counting 
Function For Krein--von Neumann and Friedrichs Extensions} 
  
\author[M.\ S.\ Ashbaugh]{Mark\ S.\ Ashbaugh}
\address{Department of Mathematics, University of
	Missouri, Columbia, MO 65211, USA}
\email{\mailto{ashbaughm@missouri.edu}}
\urladdr{\url{https://www.math.missouri.edu/people/ashbaugh}}
  
\author[F.\ Gesztesy]{Fritz Gesztesy}  
\address{Department of Mathematics,
University of Missouri, Columbia, MO 65211, USA}
\email{\mailto{gesztesyf@missouri.edu}}
\urladdr{\url{https://www.math.missouri.edu/people/gesztesy}}

\author[A.\ Laptev]{Ari Laptev}  
\address{Department of Mathematics, Imperial College London, Huxley Building, 180
Queen�s Gate, London SW7 2AZ, UK}  
\email{\mailto{a.laptev@imperial.ac.uk}}
\urladdr{\url{http://www2.imperial.ac.uk/~alaptev/}}

\author[M.\ Mitrea]{Marius Mitrea}
\address{Department of Mathematics,
University of Missouri, Columbia, MO 65211, USA}
\email{\mailto{mitream@missouri.edu}}
\urladdr{\url{https://www.math.missouri.edu/people/mitream}}

\author[S.\ Sukhtaiev]{Selim Sukhtaiev}
\address{Department of Mathematics, University of
Missouri, Columbia, MO 65211, USA}
\email{\mailto{sswfd@mail.missouri.edu}}

\dedicatory{Dedicated with great pleasure to Yuri Latushkin on the occasion of his 60th birthday.}

\thanks{A.\,L.\ was supported by the RSF grant No 15-11-30007; work of M.\,M.\ was supported by the Simons Foundation Grant $\#$\,281566.} 

\date{\today}
\subjclass[2010]{Primary 35J25, 35J40, 35P15; Secondary 35P05, 46E35, 47A10, 47F05.}
\keywords{Krein and Friedrichs extensions of general second-order uniformly elliptic partial differential operators, bounds on eigenvalue counting functions, spectral analysis, buckling problem}

\begin{abstract} 
For an arbitrary open, nonempty, bounded set $\Omega \subset \bbR^n$, $n \in \bbN$, and 
sufficiently smooth coefficients $a,b,q$, we consider the closed, strictly positive, 
higher-order differential operator $A_{\Omega, 2m} (a,b,q)$  in $L^2(\Omega)$ defined 
on $W_0^{2m,2}(\Omega)$, associated with the higher-order differential expression 
$$
\tau_{2m} (a,b,q) := \bigg(\sum_{j,k=1}^{n} (-i \partial_j - b_j) 
a_{j,k} (-i \partial_k - b_k)+q\bigg)^m, \quad  m \in \bbN,
$$ 
and its Krein--von Neumann extension $A_{K, \Omega, 2m} (a,b,q)$ in $L^2(\Omega)$. Denoting by $N(\lambda; A_{K, \Omega, 2m} (a,b,q))$, $\lambda > 0$, the eigenvalue counting function corresponding to the strictly positive eigenvalues of $A_{K, \Omega, 2m} (a,b,q)$, we derive the bound   
$$
N(\lambda; A_{K, \Omega, 2m} (a,b,q)) \leq C v_n (2\pi)^{-n} 
\bigg(1+\frac{2m}{2m+n}\bigg)^{n/(2m)} \lambda^{n/(2m)} , \quad \lambda > 0, 
$$
where $C = C(a,b,q,\Omega)>0$ (with $C(I_n,0,0,\Omega) = |\Omega|$) is connected to the eigenfunction expansion of the self-adjoint operator $\wti A_{2m} (a,b,q)$ in $L^2(\bbR^n)$ defined on 
$W^{2m,2}(\bbR^n)$, corresponding to 
$\tau_{2m} (a,b,q)$. Here $v_n := \pi^{n/2}/\Gamma((n+2)/2)$ denotes the (Euclidean) volume of the unit ball in $\bbR^n$. 

Our method of proof relies on variational considerations exploiting the fundamental link between 
the Krein--von Neumann extension and an underlying abstract buckling problem, and on the 
distorted Fourier transform defined in terms of the eigenfunction transform of $\wti A_{2} (a,b,q)$ in $L^2(\bbR^n)$. 

We also consider the analogous bound for the eigenvalue counting function for the Friedrichs extension $A_{F,\Omega, 2m} (a,b,q)$ in $L^2(\Omega)$ of $A_{\Omega, 2m} (a,b,q)$. 

No assumptions on the boundary $\partial \Omega$ of $\Omega$ are made. 
\end{abstract}

\maketitle 

\newpage 

{\scriptsize{\tableofcontents}}

\section{Introduction}  \lb{s1}

We briefly recall some background material: Suppose $S$ is a densely defined, symmetric, closed operator with nonzero deficiency indices in a separable complex Hilbert space $\cH$ that satisfies 
\begin{equation}
S\geq \varepsilon I_{\cH} \, \text{ for some } \, \varepsilon >0.     \lb{1.1}
\end{equation}
Then, according to M.\ Krein's celebrated 1947 paper \cite{Kr47}, among all nonnegative self-adjoint extensions of $S$, there exist two distinguished ones, $S_F$, the Friedrichs extension of $S$, and  
$S_K$, the Krein--von Neumann extension of $S$, which are, respectively, the largest and 
smallest such extension (in the sense of quadratic forms). In particular, a nonnegative self-adjoint 
operator $\widetilde{S}$ in $\cH$ is a self-adjoint extension of $S$ if and only if $\widetilde{S}$ 
satisfies 
\begin{equation} 
S_K\leq\widetilde{S}\leq S_F  
\end{equation}
(again, in the sense of quadratic forms).

An abstract version of \cite[Proposition\ 1]{Gr83}, presented in \cite{AGMST10}, describing the following intimate connection between the nonzero eigenvalues of $S_K$, and a suitable abstract buckling problem, can be summarized as  follows:
\begin{align}
& \text{There exists $0 \neq v_{\lambda} \in \dom(S_K)$ satisfying } \,  
S_K v_{\lambda} = \lambda v_{\lambda}, \quad \lambda \neq 0,   \lb{1.1a} \\
& \text{if and only if }   \no \\ 
& \text{there exists a $0 \neq u_{\lambda} \in \dom(S^* S)$ such that } \, 
S^* S u_{\lambda} = \lambda S u_{\lambda},   \lb{1.1b} 
\end{align}
and the solutions $v_{\lambda}$ of \eqref{1.1a} are in one-to-one correspondence with the 
solutions $u_{\lambda}$ of \eqref{1.1b} given by the pair of formulas
\begin{equation}
u_{\lambda} = (S_F)^{-1} S_K v_{\lambda},    \quad  
v_{\lambda} = \lambda^{-1} S u_{\lambda}.   \lb{1.1c}
\end{equation}
As briefly recalled in Section \ref{s2}, \eqref{1.1b} represents an abstract buckling problem. 
The latter has been the key in all attempts to date in proving Weyl-type asymptotics for 
eigenvalues of $S_K$ when $S$ represents an elliptic partial differential operator in 
$L^2(\Omega)$. In fact, it is convenient to go one step further and replace the abstract buckling eigenvalue problem \eqref{1.1b} by the variational formulation,  
\begin{align}
\begin{split} 
& \text{there exists $u_{\lambda}\in\dom(S)\backslash\{0\}$ such that}   \\
& \quad \text{${\mathfrak{a}}(w,u_{\lambda})=\lambda\,{\mathfrak{b}}(w,u_{\lambda})$ 
for all $w\in\dom(S)$},  
\end{split} 
\end{align}
where the symmetric forms $\mathfrak{a}$ and $\mathfrak{b}$ in $\cH$ are defined by 
\begin{align}   
\mathfrak{a}(f,g) & :=(Sf,Sg)_{\cH},\quad f,g\in\dom(\mathfrak{a}):=\dom(S),    \\ 
\mathfrak{b}(f,g) & :=(f,Sg)_{\cH},\quad f,g\in\dom(\mathfrak{b}):=\dom(S).    
\end{align}

In our present context, the role of the symmetric operator $S$ will be played by the closed, strictly positive operator in $L^2(\Omega)$,
\begin{equation}
A_{\Omega, 2m} (a,b,q) f = \tau_{2m} (a,b,q) f, \quad 
f \in \dom (A_{\Omega, 2m} (a,b,q)) := W_0^{2m,2}(\Omega),      
\end{equation}
where the differential expression $\tau_{2m} (a,b,q)$ is of the type,
\begin{equation}
\tau_{2m} (a,b,q) := \bigg(\sum_{j,k=1}^{n} (-i \partial_j - b_j) 
a_{j,k} (-i \partial_k - b_k)+q\bigg)^m,  \quad m \in \bbN,
\end{equation}
under the assumption that $\emptyset \neq \Omega \subset \bbR^n$ is open and bounded and 
under sufficient smoothness hypotheses on the coefficients $a,b,q$ (cf.\ Hypothesis \ref{h3.1}\,$(i)$). 
The Krein--von Neumann and Friedrichs extensions of $A_{\Omega, 2m}$ will then be denoted by 
$A_{K, \Omega, 2m} (a,b,q)$ and $A_{F, \Omega, 2m} (a,b,q)$, respectively. 

Since $A_{K,\Omega, 2m} (a,b,q)$ has purely discrete spectrum in $(0,\infty)$ bounded away 
from zero by $\varepsilon > 0$, let $\{\lambda_{K, \Omega, j}\}_{j\in\bbN}\subset(0,\infty)$ be 
the strictly positive eigenvalues of $A_{K,\Omega, 2m} (a,b,q)$ enumerated in nondecreasing order, counting multiplicity, and let
\begin{equation} 
N(\lambda; A_{K,\Omega, 2m} (a,b,q)):=\#\{j\in\bbN\,|\,0<\lambda_{K,\Omega,j} < \lambda\}, 
\quad \lambda > 0, 
\end{equation}
be the eigenvalue distribution function for $A_{K,\Omega, 2m} (a,b,q)$ (which takes into account 
only strictly positive eigenvalues of $A_{K,\Omega, 2m} (a,b,q)$);  
$N(\, \cdot \, ;A_{K,\Omega, 2m} (a,b,q))$ is the principal object of this note. 
Similarly, $N(\lambda; A_{F,\Omega, 2m} (a,b,q))$, $\lambda >0$, denotes the eigenvalue 
counting function for $A_{F,\Omega, 2m} (a,b,q)$. 

For convenience of the reader, we recall the basic abstract facts on the Friedrichs extension, 
$S_F$ and the Krein--von Neumann extension $S_K$ of a strictly positive, closed, symmetric 
operator $S$ in a complex, separable Hilbert space $\cH$ and describe the intimate link between 
the Krein--von Neumann extension and the underlying abstract buckling problem in Section \ref{s2}. Section \ref{s3} focuses on basic domain and spectral properties of the operators, 
$\wti A_{2m} (a,b,q)$, $A_{\Omega, 2m} (a,b,q)$, $A_{K, \Omega, 2m} (a,b,q)$, and 
$A_{F, \Omega, 2m} (a,b,q)$, $m \in \bbN$, and their associated quadratic forms, on open, 
bounded subsets $\Omega \subset \bbR^n$ (without imposing any constraints on $\partial \Omega$). 
In our principal Section \ref{s4} we derive the bounds  
\begin{align}
& N(\lambda; A_{K, \Omega, 2m} (a,b,q)) \leq \frac {v_n} {(2\pi)^n}	
\bigg(1+\frac{2m}{2m+n}\bigg)^{n/(2m)} 
\sup_{\xi\in\R^n}\|\phi(\, \cdot \,, \xi)\|^2_{L^2(\Omega)} \, \lambda^{n/(2m)},     \no \\
& \hspace*{10cm}  \lambda > 0,    \lb{1.11} 
\end{align} 
and 
\begin{align} 
& N(\lambda; A_{F,\Omega,2m} (a,b,q)) \leq \frac {v_n} {(2\pi)^n}	
\bigg(1+\frac{2m}{n}\bigg)^{n/(2m)} 
\sup_{\xi\in\R^n}\|\phi(\, \cdot \,, \xi)\|^2_{L^2(\Omega)} \, \lambda^{n/(2m)},    \no \\
& \hspace*{10cm} \lambda > 0,   \lb{1.12} 
\end{align} 
where $v_n := \pi^{n/2}/\Gamma((n+2)/2)$ denotes the $($Euclidean$)$ volume of the unit ball 
in $\bbR^n$ $($$\Gamma(\cdot)$ being the Gamma function$)$, and $\phi (\, \cdot \, , \, \cdot \,)$ 
represent the suitably normalized generalized eigenfunctions of $\wti A_2 (a,b,q)$ satisfying 
\begin{equation} 
\wti A_2 (a,b,q) \phi(\, \cdot \,,\xi)=|\xi|^2\phi(\, \cdot \,,\xi), \quad \xi \in \bbR^n,
\end{equation} 
in the distributional sense (cf.\ Hypothesis \ref{h4.1}). In particular, whenever the property 
\begin{equation}
\sup_{(x,\xi) \in \Omega \times \bbR^{n}} |\phi(x,\xi)| < \infty  
\end{equation} 
has been established, then 
\begin{equation} 
\sup_{\xi \in \bbR^n} \|\phi(\, \cdot \,, \xi)\|^2_{L^2(\Omega)} \leq |\Omega| 
\sup_{(x,\xi) \in \Omega \times \bbR^{n}} \big(|\phi(x,\xi)|^2\big), 
\end{equation}
explicitly exhibits the volume dependence on $\Omega$ of the right-hand sides of \eqref{1.11} 
and \eqref{1.12}, respectively (see also Section \ref{s5}). 

Our method of employing the eigenfunction transform (i.e., the distorted Fourier transform) 
associated with the variable coefficient operator $\wti A_{2m} (a,b,q)$ (replacing the standard 
Fourier transform in connection with the constant coefficient case in \cite{GLMS15}) to derive 
the results \eqref{1.11} and \eqref{1.12} appears to be new under any assumptions on 
$\partial \Omega$. A comparison of \eqref{1.11}, \eqref{1.12} with the existing literature on 
eigenvalue counting function bounds will be provided in Remark \ref{r4.6}. 

We remark that the power law behavior $\lambda^{n/(2m)}$ coincides with the one in the known 
Weyl asymptotic behavior. This in itself is perhaps not surprising as it is {\it a priori} known that 
\begin{equation}
N(\lambda; A_{K,\Omega, 2m} (a,b,q)) \leq N(\lambda; A_{F,\Omega, 2m} (a,b,q)), 
\quad \lambda > 0,  \lb{1.13}
\end{equation} 
and $N(\lambda; A_{F,\Omega, 2m} (a,b,q))$ is known to have the power law behavior 
$\lambda^{n/(2m)}$ (cf.\ \cite{La97} in the case $a = I_n$, $b=q=0$, extending the corresponding 
result in \cite{LY83} in the case $m=1$). We emphasize that \eqref{1.13} is not in conflict with 
variational eigenvalue estimates since    
$N(\lambda; A_{K,\Omega, 2m} (a,b,q))$ only counts the strictly positive eigenvalues of 
$A_{K,\Omega, 2m} (a,b,q)$ less than $\lambda > 0$ and hence avoids taking into account the (generally, infinite-dimensional) null space of $A_{K,\Omega, 2m} (a,b,q)$. 
Rather than relying on estimates for $N(\, \cdot \, ; A_{F,\Omega, 2m} (a,b,q))$ 
(cf., e.g., \cite{BS70}--\cite{BS80}, 
\cite{Ge14}, \cite{GLW11}, \cite{HH08}, \cite{HH11}, \cite{La97}, \cite{LY83}, \cite{Li80}, 
\cite{Me77}, \cite{NS05}, \cite{Ro71}, \cite{Ro72}, \cite{Ro76}, \cite{Sa01}, \cite{We08}, typically  
for $a = I_n$, $b=0$), we will use the one-to-one correspondence of nonzero eigenvalues 
of $A_{K,\Omega, 2m} (a,b,q)$ with the eigenvalues of its underlying buckling problem 
(cf.\ \eqref{1.1a}--\eqref{1.1c}) and estimate the eigenvalue counting function for the latter. 
Section \ref{s5} illustrates the purely absolutely continuous spectrum and eigenfunction 
assumption we impose on $\wti A_{2m} (a,b,q)$ in $L^2(\bbR^n)$. Finally, 
Appendix \ref{sA} derives a crucial minimization result needed in the derivation of the bound 
\eqref{1.11}, it also compares \eqref{1.11} with the abstract bound \eqref{1.13}, given \eqref{1.12}, 
and points out that the bound \eqref{1.11} is always superior to the abstract one guaranteed by 
combining \eqref{1.12} and \eqref{1.13}. 

In the special case $a=I_n$, $b=q=0$, the bound \eqref{1.11} was derived in \cite{GLMS15}, while 
the bound \eqref{1.12} is due to \cite{La97} in this case. 

Since Weyl asymptotics for $N(\, \cdot \, ; A_{K,\Omega, 2m} (a,b,q))$ and 
$N(\, \cdot \,  ; A_{F,\Omega, 2m} (a,b,q))$ is not considered in this paper (with 
exception of Remark \ref{r4.7}), we just refer to the monographs \cite{Le90} and \cite{SV97}, 
and to \cite{Mi94}, \cite{Mi06}, but note that very detailed bibliographies on this subject 
appeared in \cite{AGMT10} and \cite{AGMST13}. At any rate, the best known result on Weyl asymptotics with remainder estimate for $N(\, \cdot \, ; A_{K,\Omega, 2m} (I_n,0,q))$ to date for bounded Lipschitz domains appears to be \cite{BGMM16} (the case of quasi-convex domains 
having been discussed earlier in \cite{AGMT10}). In contrast to Weyl asymptotics with remainder estimates, the estimates \eqref{1.11}, \eqref{1.12} assume no regularity of $\partial \Omega$ at all. 

We conclude this introduction by summarizing the notation used in this paper. Throughout this 
paper, the symbol $\cH$ is reserved to denote a separable complex Hilbert space with  
$(\dott,\dott)_{\cH}$ the scalar product in $\cH$ (linear in the second argument), and $I_{\cH}$ 
the identity operator in $\cH$. Next, let $T$ be a linear operator mapping (a subspace of) a
Banach space into another, with $\dom(T)$ and $\ran(T)$ denoting the domain and range of $T$. 
The closure of a closable operator $S$ is denoted by $\ol S$. The kernel (null space) of $T$ is 
denoted by $\ker(T)$. The spectrum, point spectrum (i.e., the set of eigenvalues), discrete 
spectrum, essential spectrum, and resolvent set of a closed linear operator in $\cH$ will be 
denoted by $\sigma(\cdot)$, $\sigma_{p}(\cdot)$, $\sigma_{d}(\cdot)$, $\sigma_{ess}(\cdot)$, 
and $\rho(\cdot)$, respectively. The symbol $\slim$ abbreviates the limit in the strong 
(i.e., pointwise) operator topology (we also use this symbol to describe strong limits in $\cH$). 

The Banach spaces of bounded and compact linear operators on $\cH$ are
denoted by $\cB(\cH)$ and $\cB_\infty(\cH)$, respectively. Similarly,
the Schatten--von Neumann (trace) ideals will subsequently be denoted
by $\cB_p(\cH)$, $p\in (0,\infty)$. 
In addition, $U_1\dotplus U_2$ denotes the direct sum of the subspaces $U_1$ 
and $U_2$ of a Banach space $\cX$. Moreover, $\cX_1 \hookrightarrow \cX_2$ denotes the continuous embedding of the Banach space $\cX_1$ into the Banach space $\cX_2$. 

The symbol $L^2(\Omega)$, with $\Omega\subseteq\bbR^n$ open, $n\in\bbN$, 
is a shorthand for $L^2(\Omega,d^n x)$, whenever the $n$-dimensional Lebesgue measure $d^n x$ is 
understood. For brevity, the identity operator in $L^2(\Omega)$ will typically be denoted 
by $I_{\Omega}$. The symbol $\cD(\Omega)$ is reserved for the set 
of test functions $C_0^{\infty}(\Omega)$ on $\Omega$, equipped with the standard 
inductive limit topology, and $\cD'(\Omega)$ represents its dual space, the set of distributions 
in $\Omega$. The distributional pairing, compatible with the $L^2$-scalar product, 
$( \, \cdot \,, \, \cdot \,)_{L^2(\Omega)}$, is abbreviated by 
${}_{\cD'(\Omega)} \langle \, \cdot \, , \, \cdot \, \rangle_{\cD(\Omega)}$. The (Euclidean) volume 
of $\Omega$ is denoted by $|\Omega|$.

The cardinality of a set $M$ is abbreviated by $\# M$. 

For each multi-index $\alpha=(\alpha_1,...,\alpha_n) \in \bbN_0^n$ (abbreviating 
$\bbN_0:=\bbN\cup\{0\}$) we denote by $|\alpha|:=\alpha_1+\cdots+\alpha_n$ the length of $\alpha$.
In addition, we use the standard notations $\partial_j = (\partial/\partial x_j)$, $1 \leq j \leq n$, $\partial^{\alpha} = \partial^{\alpha_1}_{x_1} \cdots \partial^{\alpha_n}_{x_n}$, 
$\nabla = (\partial_{x_1}, \dots , \partial_{x_n})$, and $\Delta = \sum_{j=1}^n \partial^2_j$.

\section{Basic Facts on the Krein--von Neumann 
extension and the Associated Abstract Buckling Problem} 
\lb{s2}

In this preparatory section we recall the basic facts on the Krein--von Neumann extension of 
a strictly positive operator $S$ in a complex, separable Hilbert space $\cH$ and its associated 
abstract buckling problem as discussed in \cite{AGMT10, AGMST10}. 
For an extensive survey of this circle of ideas and an exhaustive list of references 
as well as pertinent historical comments we refer to \cite{AGMST13}.

To set the stage throughout this section, we denote by $S$ a linear, densely defined, symmetric 
(i.e., $S\subseteq S^*$), and closed operator in $\cH$. 
We recall that $S$ is called {\it nonnegative} provided $(f,Sf)_\cH\geq 0$ for all $f\in\dom(S)$.
The operator $S$ is called {\it strictly positive}, if for some $\varepsilon>0$ one has
$(f,Sf)_\cH\geq\varepsilon\|f\|_{\cH}^2$ for all $f\in\dom(S)$; 
one then writes $S\geq\varepsilon I_{\cH}$.  
Next, we recall that two nonnegative, self-adjoint operators $A,B$ in $\cH$ 
satisfy $A\leq B$ (in the sense of forms) if 
\begin{align}\lb{AleqBjussi1}
\dom\big(B^{1/2}\big)\subset\dom\big(A^{1/2}\big)        
\end{align}
and
\begin{align}\lb{AleqBjussi2}
\big(A^{1/2}f,A^{1/2}f\big)_{\cH}\leq\big(B^{1/2}f,B^{1/2}f\big)_{\cH},
\quad f\in\dom\big(B^{1/2}\big). 
\end{align}
We also recall (\cite[Section\ I.6]{Fa75}, \cite[Theorem\ VI.2.21]{Ka80}) that for 
$A$ and $B$ both self-adjoint and nonnegative in $\cH$ one has 
\begin{equation}
0\leq A\leq B\,\text{ if and only if }\,
(B+a I_\cH)^{-1}\leq(A+a I_\cH)^{-1}\,\text{ for all }\,a>0.      
\end{equation}
Moreover, we note the useful fact that $\ker(A)=\ker(A^{1/2})$. 

The following is a fundamental result in M.\ Krein's celebrated 1947 paper
\cite{Kr47} (cf.\ also Theorems~2 and 5--7 in the English summary on page 492): 
 
\begin{theorem}\lb{t2.1}
Assume that $S$ is a densely defined, closed, nonnegative operator in $\cH$. Then, among all 
nonnegative self-adjoint extensions of $S$, there exist two distinguished ones, $S_K$ and $S_F$, 
which are, respectively, the smallest and largest such extension {\rm (}in the sense of 
\eqref{AleqBjussi1}--\eqref{AleqBjussi2}{\rm )}. Furthermore, a nonnegative self-adjoint 
operator $\widetilde{S}$ in $\cH$ is a self-adjoint extension of $S$ if and only if $\widetilde{S}$ 
satisfies 
\begin{equation}\lb{Fr-Sa}
S_K\leq\widetilde{S}\leq S_F.
\end{equation}
In particular, the fact that \eqref{Fr-Sa} holds for all nonnegative self-adjoint extensions 
$\wti S$ of $S$ determines $S_K$ and $S_F$ uniquely. 
In addition,  if $S\geq\varepsilon I_{\cH}$ for some $\varepsilon>0$, one has 
$S_F\geq\varepsilon I_{\cH}$, and 
\begin{align}\lb{SF}
\dom(S_F) & =\dom(S)\dotplus (S_F)^{-1}\ker (S^*),   \\ 
\dom(S_K) & =\dom(S)\dotplus\ker(S^*),    
\lb{SK}   \\
\dom(S^*) & =\dom(S)\dotplus (S_F)^{-1}\ker(S^*)\dotplus\ker(S^*)    \no \\
& =\dom(S_F)\dotplus\ker(S^*),    
\lb{S*} 
\end{align}
and 
\begin{equation}\lb{Fr-4Tf}
\ker(S_K)=\ker\big((S_K)^{1/2}\big)=\ker(S^*)=\ran(S)^{\bot}.
\end{equation} 
\end{theorem}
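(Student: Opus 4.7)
My plan is to reduce first to the strictly positive case (shift by $\varepsilon I_{\cH}$ if only nonnegative), since strict positivity makes $\ran(S)$ closed—via $\|Sf\|_{\cH}\geq\varepsilon\|f\|_{\cH}$—and therefore gives the orthogonal decomposition $\cH=\ran(S)\oplus\ker(S^*)$ that drives all the subsequent formulas. For the Friedrichs extension I would follow the classical form-closure route: equip $\dom(S)$ with the strictly positive inner product $\mathfrak{s}(f,g):=(Sf,g)_{\cH}$, note that closability in $\cH$ is immediate from $\mathfrak{s}(f,f)\geq\varepsilon\|f\|_{\cH}^{2}$, and apply the first representation theorem to $\overline{\mathfrak{s}}$ to produce a unique self-adjoint operator $S_{F}\geq\varepsilon I_{\cH}$ representing it. That $S_{F}$ extends $S$ is then built in, and the bound $S_{F}\geq\varepsilon I_{\cH}$ is inherited from $\overline{\mathfrak{s}}$.

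For $S_{K}$ I would take \eqref{SK} as the \emph{definition}:
\begin{equation*}
\dom(S_{K}) := \dom(S)\dotplus\ker(S^{*}),\qquad S_{K}(f+k):=Sf,\ \ f\in\dom(S),\ k\in\ker(S^{*}),
\end{equation*}
where the sum is direct because $\dom(S)\cap\ker(S^{*})=\ker(S)=\{0\}$ by strict positivity. Symmetry and nonnegativity are short calculations that use $\ran(S)\perp\ker(S^{*})$. The main step is self-adjointness: if $g\in\dom(S_{K}^{*})$ with $S_{K}^{*}g=h$, testing against an arbitrary $k\in\ker(S^{*})\subseteq\dom(S_{K})$ (on which $S_{K}$ vanishes) forces $(h,k)_{\cH}=0$, so $h\in\ran(S)^{\perp\perp}=\overline{\ran(S)}=\ran(S)$. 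Hence $h=Sf$ for a unique $f\in\dom(S)$, and $S^{*}(g-f)=h-Sf=0$ places $g-f\in\ker(S^{*})$, so $g\in\dom(S_{K})$. This yields $S_{K}^{*}\subseteq S_{K}$, giving self-adjointness, and the identity \eqref{Fr-4Tf} then drops out immediately: $\ker(S^{*})\subseteq\ker(S_{K})$ is built in, while $S_{K}(f+k)=0\Rightarrow Sf=0\Rightarrow f=0$ by strict positivity yields the reverse inclusion; $\ker(S^{*})=\ran(S)^{\perp}$ is standard, and $\ker(S_{K})=\ker\bigl((S_{K})^{1/2}\bigr)$ is general for nonnegative self-adjoint operators.

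The extremality \eqref{Fr-Sa} I would split into two halves. For $\widetilde{S}\leq S_{F}$: any closed nonnegative form $\widetilde{\mathfrak{s}}$ representing a nonnegative self-adjoint extension $\widetilde{S}$ of $S$ must extend the form $\mathfrak{s}$ on $\dom(S)$, hence its closure $\overline{\mathfrak{s}}$ (the Friedrichs form); under the form-operator correspondence this larger form domain for $\widetilde{\mathfrak{s}}$ translates into the operator inequality $\widetilde{S}\leq S_{F}$. The reverse inequality $S_{K}\leq\widetilde{S}$ is the delicate part; I would establish it by proving that the Krein form admits the variational characterization
\begin{equation*}
\bigl(S_{K}^{1/2}g,S_{K}^{1/2}g\bigr)_{\cH}=\inf\bigl\{\,\widetilde{\mathfrak{s}}(g+k,g+k)\,:\,k\in\ker(S^{*})\,\bigr\},
\end{equation*}
valid for any nonnegative self-adjoint extension $\widetilde{S}$, with the infimum attained; this characterization identifies $\mathfrak{s}_{K}$ as the \emph{smallest} closed nonnegative form extending $\mathfrak{s}$, i.e., $S_{K}$ as the smallest operator extension, and simultaneously forces the uniqueness of $S_{F}$ and $S_{K}$ via the sandwich \eqref{Fr-Sa}. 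Finally, the domain decompositions \eqref{SF} and \eqref{S*} follow from von Neumann's extension theory combined with \eqref{SK}: \eqref{SF} uses that $(S_{F})^{-1}\ker(S^{*})$ provides a complement of $\dom(S)$ within $\dom(S_{F})$ (its elements are mapped by $S_{F}$ onto $\ker(S^{*})$, which is disjoint from $\ran(S)$), and \eqref{S*} assembles these two decompositions. The principal obstacle throughout is the verification of the Krein minimality inequality; the Friedrichs half and all the algebraic identities are essentially bookkeeping once the framework is in place.
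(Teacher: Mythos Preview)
The paper does not prove this theorem at all; it is stated as classical background and attributed to Krein's 1947 paper \cite{Kr47} (with the domain formulas going back to Krein, Vi{\v s}ik, Birman, and Grubb), so there is no in-house argument to compare against. Your outline is a standard modern route---essentially the form-closure construction of $S_F$ combined with Ando's direct description of $S_K$ via \eqref{SK}---and for the strictly positive case $S\geq\varepsilon I_{\cH}$ it is essentially correct.

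Two points deserve attention. First, your opening reduction ``shift by $\varepsilon I_{\cH}$ if only nonnegative'' is fine for $S_F$ (since $(S+aI_{\cH})_F=S_F+aI_{\cH}$) but does \emph{not} work for $S_K$: the Krein--von Neumann extension is not shift-covariant, because $(S+aI_{\cH})_K-aI_{\cH}$ is the smallest self-adjoint extension of $S$ bounded below by $-aI_{\cH}$, not the smallest nonnegative one. To cover the merely nonnegative case you need Krein's resolvent characterization (maximality of $(\widetilde S+aI_{\cH})^{-1}$ over all nonnegative self-adjoint extensions, for each $a>0$) or the Ando--Nishio form description, rather than a shift. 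Second, in your self-adjointness argument the line ``$S^{*}(g-f)=h-Sf=0$'' presumes $g\in\dom(S^{*})$; this is true because $S\subseteq S_K$ forces $S_K^{*}\subseteq S^{*}$, but you should say so explicitly. The Krein-minimality step you flag as ``the principal obstacle'' is indeed the nontrivial part; the variational formula you wrote is correct and is one of the standard ways to prove it, but as written it is asserted rather than established.
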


One calls $S_K$ the {\it Krein--von Neumann extension} of $S$ and $S_F$ the 
{\it Friedrichs extension} of $S$. We also recall that 
\begin{equation}\label{yaf74df}
S_F=S^*|_{\dom(S^*)\cap\dom((S_{F})^{1/2})}.
\end{equation}
Furthermore, if $S\geq\varepsilon I_{\cH}$ for some $\varepsilon > 0$, then \eqref{SK} implies 
\begin{equation}\label{yaere}
\ker(S_K)=\ker\big((S_K)^{1/2}\big)=\ker(S^*)=\ran(S)^{\bot}.
\end{equation} 

For abstract results regarding the parametrization of all nonnegative self-adjoint extensions 
of a given strictly positive, densely defined, symmetric operator we refer the reader to 
Krein \cite{Kr47}, Vi{\v s}ik \cite{Vi63}, Birman \cite{Bi56}, Grubb \cite{Gr68,Gr70}, 
subsequent expositions due to Alonso and Simon \cite{AS80}, Faris \cite[Sect.\ 15]{Fa75}, 
and \cite[Sect.~13.2]{Gr09}, \cite{Gr12}, \cite[Ch.~13]{Sc12}, and Derkach and Malamud \cite{DM91}, 
Malamud \cite{Ma92}, see also \cite[Theorem~9.2]{GM11}.

Let us collect a basic assumption which will be imposed in the rest of this section.

\begin{hypothesis}\lb{h2.2}
Suppose $S$ is a densely defined, symmetric, closed operator with nonzero deficiency 
indices in $\cH$ that satisfies $S\geq\varepsilon I_{\cH}$ for some $\varepsilon >0$. 
\end{hypothesis}

For subsequent purposes we note that under Hypothesis~\ref{h2.2}, one has 
\begin{equation}\lb{jajagutgut}
\dim\big(\ker(S^*-z I_{\cH})\big)=\dim\big(\ker(S^*)\big),
\quad z\in\bbC\backslash[\varepsilon,\infty).  
\end{equation}

We recall that two self-adjoint extensions $S_1$ and $S_2$ of $S$ are called 
{\it relatively prime} (or {\it disjoint}) if $\dom (S_1)\cap\dom (S_2)=\dom (S)$. 
The following result will play a role later on (cf., e.g., \cite[Lemma~2.8]{AGMT10} for an 
elementary proof): 

\begin{lemma}\lb{l2.3}
Assume Hypothesis~\ref{h2.2}. Then the Friedrichs extension $S_F$ and the Krein--von 
Neumann extension $S_K$ of $S$ are relatively prime, that is,
\begin{equation}\label{utrre}
\dom (S_F)\cap\dom(S_K)=\dom(S). 
\end{equation}
\end{lemma}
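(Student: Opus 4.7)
The plan is to derive \eqref{utrre} directly from the direct sum decompositions recorded in Theorem~\ref{t2.1}, in particular \eqref{SK} and the second form of \eqref{S*}, namely $\dom(S^*)=\dom(S_F)\dotplus\ker(S^*)$. The inclusion $\dom(S)\subseteq\dom(S_F)\cap\dom(S_K)$ is immediate from the fact that both $S_F$ and $S_K$ are extensions of $S$, so only the reverse inclusion requires proof.

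First, I would fix an arbitrary $f\in\dom(S_F)\cap\dom(S_K)$. Using the Krein--von Neumann decomposition \eqref{SK}, write $f=f_1+h$ with $f_1\in\dom(S)$ and $h\in\ker(S^*)$. Since $\dom(S)\subseteq\dom(S_F)$, both $f$ and $f_1$ lie in $\dom(S_F)$, while $h\in\ker(S^*)\subseteq\dom(S^*)$, so every term in this identity lies in $\dom(S^*)$. Thus $f=f_1+h$ is a valid decomposition of $f$ in the sense of the direct sum \eqref{S*}, with the $\dom(S_F)$-component equal to $f_1$ and the $\ker(S^*)$-component equal to $h$.

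Next, because $f\in\dom(S_F)$, the trivial splitting $f=f+0$ is a second decomposition of $f$ within $\dom(S^*)=\dom(S_F)\dotplus\ker(S^*)$, this time with $\dom(S_F)$-component $f$ and $\ker(S^*)$-component $0$. The uniqueness built into the direct sum immediately forces $h=0$, and consequently $f=f_1\in\dom(S)$. This proves $\dom(S_F)\cap\dom(S_K)\subseteq\dom(S)$, completing the argument.

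There is essentially no serious obstacle here: the content of the lemma is packaged into the direct sum identities of Theorem~\ref{t2.1}, and the only step requiring care is recognizing that the Krein decomposition $f=f_1+h$ can be reinterpreted as a decomposition in the Friedrichs-based direct sum \eqref{S*}, which then triggers uniqueness. No explicit use of \eqref{Fr-4Tf} or of the gap-free property \eqref{jajagutgut} is needed, and the strict positivity $S\geq\varepsilon I_\cH$ enters only implicitly through its role in guaranteeing that the decompositions \eqref{SK} and \eqref{S*} hold in the stated direct sum form.
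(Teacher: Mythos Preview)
Your proof is correct. The paper itself does not give an argument but merely cites \cite[Lemma~2.8]{AGMT10} for ``an elementary proof''; your derivation from the direct sum decompositions \eqref{SK} and \eqref{S*} is exactly the standard elementary route and is almost certainly what that reference contains.
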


Next, we consider a self-adjoint operator $T$ in $\cH$ which is bounded from below, 
that is, $T\geq\alpha I_{\cH}$ for some $\alpha\in\bbR$. We denote by 
$\{E_T(\lambda)\}_{\lambda\in\bbR}$ the family of strongly right-continuous spectral 
projections of $T$, and introduce for $-\infty\leq a<b$,  as usual, 
\begin{equation}\label{74ed}
E_T\big((a,b)\big)=E_T(b_{-})-E_T(a)\quad\text{and}\quad 
E_T(b_{-})=\slim_{\varepsilon\downarrow 0}E_T(b-\varepsilon).
\end{equation}  
In addition, we set 
\begin{equation}\label{i5r}
\mu_{T,j}:=\inf\,\big\{\lambda\in\bbR\,\big|\,
\dim(\ran(E_T((-\infty,\lambda))))\geq j\big\},\quad j\in\bbN.
\end{equation} 
Then, for fixed $k\in\bbN$, either: 
\\
$(i)$ $\mu_{T,k}$ is the $k$th eigenvalue of $T$ counting multiplicity 
below the bottom of the essential spectrum, $\sigma_{ess}(T)$, of $T$, 
\\
or, 
\\
$(ii)$ $\mu_{T,k}$ is the bottom of the essential spectrum of $T$, 
\begin{equation}\label{85f4}
\mu_{T,k}=\inf\,\big\{\lambda\in\bbR\,\big|\,\lambda\in\sigma_{ess}(T)\big\}, 
\end{equation}
and in that case $\mu_{T,k+\ell}=\mu_{T,k}$, $\ell\in\bbN$, and there are at 
most $k-1$ eigenvalues (counting multiplicity) of $T$ below $\mu_{T,k}$. 

We now record a basic result of M. Krein \cite{Kr47} with an extension due 
to Alonso and Simon \cite{AS80} and some additional results recently derived in 
\cite{AGMST10}. For this purpose we introduce the {\it reduced  
Krein--von Neumann operator} $\hatt S_K$ in the Hilbert space 
\begin{equation}\lb{hattH}
\hatt\cH:=\big(\ker(S^*)\big)^{\bot}=\big(\ker(S_K)\big)^{\bot}  
\end{equation}
by 
\begin{align}\lb{2.17}
\hatt{S}_K & :=P_{(\ker(S_K))^{\bot}} S_K|_{(\ker(S_K))^{\bot}},  
\quad\dom(\hatt{S}_K)=\dom S_K\cap\hatt\cH,      
\end{align} 
where $P_{(\ker(S_K))^\bot}$ denotes the orthogonal projection onto $(\ker(S_K))^\bot$.
One then obtains  
\begin{equation}\lb{SKinv}
\big(\hatt{S}_K\big)^{-1}=P_{(\ker(S_K))^{\bot}}(S_F)^{-1}|_{(\ker(S_K))^{\bot}},    
\end{equation}
a relation due to Krein \cite[Theorem~26]{Kr47} (see also \cite[Corollary~5]{Ma92}).

\begin{theorem}\lb{t2.4}
Assume Hypothesis~\ref{h2.2}. Then
\begin{equation}\lb{Barr-5}
\varepsilon\leq\mu_{S_F,j}\leq\mu_{\hatt S_K,j},   \quad j\in\bbN.
\end{equation} 
In particular, if the Friedrichs extension $S_F$ of $S$ has purely discrete
spectrum, then, except possibly for $\lambda=0$, the Krein--von Neumann extension
$S_K$ of $S$ also has purely discrete spectrum in $(0,\infty)$, that is, 
\begin{equation}\lb{ESSK}
\sigma_{ess}(S_F)=\emptyset\,\text{ implies }\,\sigma_{ess}(S_K)\subseteq\{0\}.      
\end{equation}
In addition, if $p\in (0,\infty]$, then $(S_F-z_0 I_{\cH})^{-1}\in\cB_p(\cH)$ 
for some $z_0\in\bbC\backslash [\varepsilon,\infty)$ implies
\begin{equation}\lb{CPK}
(S_K-zI_{\cH})^{-1}\big|_{(\ker(S_K))^{\bot}}\in\cB_p\big(\hatt \cH\big) 
\,\text{ for all $z\in\bbC\backslash [\varepsilon,\infty)$}.  
\end{equation}
In fact, the $\ell^p(\bbN)$-based trace ideal $\cB_p(\cH)$ 
$\big($resp., $\cB_p\big(\hatt \cH\big)$$\big)$  
of $\cB(\cH)$ $\big($resp., $\cB\big(\hatt \cH\big)$$\big)$ can be 
replaced by any two-sided symmetrically normed ideal of $\cB(\cH)$ 
$\big($resp., $\cB\big(\hatt \cH\big)$$\big)$.
\end{theorem}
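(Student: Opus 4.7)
The plan is to extract every assertion from the Krein compression identity \eqref{SKinv}, which realizes $(\hatt{S}_K)^{-1}$ as the compression of the bounded positive operator $(S_F)^{-1}$ to the closed subspace $\hatt\cH = (\ker(S_K))^{\bot}$. Since Theorem~\ref{t2.1} upgrades the hypothesis $S \geq \varepsilon I_{\cH}$ to $S_F \geq \varepsilon I_{\cH}$, both $(S_F)^{-1}$ and $(\hatt{S}_K)^{-1}$ are bounded, self-adjoint, and strictly positive with spectra contained in $(0,\varepsilon^{-1}]$, and the strategy is to compare their spectral data via the min-max variational principle and then invert via the bijection $\lambda \mapsto \lambda^{-1}$.

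For \eqref{Barr-5}, I would introduce for a bounded self-adjoint operator $T$ the upper thresholds
\begin{equation}
\mu_j^+(T) := \sup \bigl\{\lambda \in \bbR \, \big| \, \dim \ran(E_T((\lambda,\infty))) \geq j \bigr\}, \quad j \in \bbN,
\end{equation}
which satisfy the min-max characterization
\begin{equation}
\mu_j^+(T) = \sup_{\substack{V \subseteq \cH \\ \dim V \geq j}} \, \inf_{\substack{x \in V \\ \norm{x}_{\cH}=1}} (Tx,x)_{\cH}.
\end{equation}
Applying this to $T=(S_F)^{-1}$ on $\cH$ and to $T=(\hatt{S}_K)^{-1}$ on $\hatt\cH$, and combining the fact that $((\hatt{S}_K)^{-1}x,x)_{\hatt\cH} = ((S_F)^{-1}x,x)_{\cH}$ for $x \in \hatt\cH$ (by \eqref{SKinv}) with the observation that every subspace $V \subseteq \hatt\cH$ is also admissible in $\cH$, yields at once $\mu_j^+((\hatt{S}_K)^{-1}) \leq \mu_j^+((S_F)^{-1})$. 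Since both inverses have trivial kernel in their respective Hilbert spaces, the spectral bijection $\lambda \mapsto \lambda^{-1}$ converts this into $\mu_{S_F,j} \leq \mu_{\hatt{S}_K,j}$; the lower bound $\varepsilon \leq \mu_{S_F,j}$ is immediate from $S_F \geq \varepsilon I_{\cH}$.

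Parts \eqref{ESSK} and \eqref{CPK} then cascade from \eqref{SKinv} via the two-sided ideal property of $\cB_\infty(\cH)$ and $\cB_p(\cH)$. If $\sigma_{ess}(S_F)=\emptyset$, then $(S_F)^{-1}$ is compact, so its compression $(\hatt{S}_K)^{-1}$ is also compact on $\hatt\cH$, forcing $\sigma_{ess}(\hatt{S}_K)=\emptyset$; decomposing $S_K = \hatt{S}_K \oplus 0|_{\ker(S_K)}$ relative to $\cH = \hatt\cH \oplus \ker(S_K)$ (legitimate by \eqref{Fr-4Tf}) then delivers $\sigma_{ess}(S_K) \subseteq \{0\}$. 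Similarly, if $(S_F - z_0 I_{\cH})^{-1} \in \cB_p(\cH)$, the first resolvent identity transfers membership to $(S_F)^{-1} \in \cB_p(\cH)$, whereupon \eqref{SKinv} and the two-sided ideal property force $(\hatt{S}_K)^{-1} \in \cB_p(\hatt\cH)$; a second application of the first resolvent identity propagates this to $(\hatt{S}_K - zI_{\hatt\cH})^{-1} \in \cB_p(\hatt\cH)$ for every $z \in \rho(\hatt{S}_K)$, which coincides with $(S_K - z I_{\cH})^{-1}|_{\hatt\cH}$. The extension to an arbitrary two-sided symmetrically normed ideal uses nothing beyond this ideal property, so the same argument goes through verbatim.

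The one genuinely delicate point is the reciprocal step: when $\sigma_{ess}(S_F) \neq \emptyset$, one must verify that the threshold $\mu_{T,j}$ defined in \eqref{i5r}, which may realize the bottom-of-essential-spectrum scenario \eqref{85f4}, transforms consistently under $\lambda \mapsto \lambda^{-1}$ into the upper threshold $\mu_j^+(T^{-1})$ of the bounded inverse. This is a bookkeeping exercise about how spectral projections of $T$ and $T^{-1}$ interlock through the continuous functional calculus; modulo this routine verification, the entire theorem is a two-line consequence of the compression identity \eqref{SKinv}.
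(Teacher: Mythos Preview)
Your proposal is correct. The paper itself does not supply a proof of Theorem~\ref{t2.4}; immediately after the statement it simply attributes \eqref{ESSK} to Krein~\cite{Kr47}, \eqref{Barr-5} to Alonso--Simon~\cite{AS80}, and \eqref{CPK} to \cite{AGMST10}. So there is no ``paper's own proof'' to compare against beyond these citations.

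That said, your route via the compression identity \eqref{SKinv} is exactly the natural one, and it is essentially how the trace-ideal assertion \eqref{CPK} is handled in \cite{AGMST10}. The min-max argument you give for \eqref{Barr-5} is clean: compressing a bounded positive operator to a closed subspace can only decrease the max-min numbers $\mu_j^+$, and the reciprocal step $\mu_{A,j}=(\mu_j^+(A^{-1}))^{-1}$ for $A\geq \varepsilon I_{\cH}$ follows from the spectral-projection identity $E_A((-\infty,\lambda))=E_{A^{-1}}((\lambda^{-1},\infty))$, valid uniformly in the discrete and essential-spectrum regimes. (The paper remarks that Alonso--Simon used ``a somewhat different method'' for \eqref{Barr-5}, so your argument may in fact be closer to the Krein/compression viewpoint than to theirs.) Your treatment of \eqref{ESSK} and \eqref{CPK} via the two-sided ideal property and the first resolvent identity is likewise standard and correct; the only cosmetic point is that at $z=0$ the expression $(S_K-zI_{\cH})^{-1}\big|_{\hatt\cH}$ must be read as $(\hatt S_K)^{-1}$, but this is an ambiguity already present in the theorem's notation rather than a flaw in your argument.
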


We note that \eqref{ESSK} is a classical result of Krein \cite{Kr47}. Apparently, \eqref{Barr-5} 
in the context of infinite deficiency indices was first proven by Alonso and Simon \cite{AS80} 
by a somewhat different method. The implication \eqref{CPK} was proved in \cite{AGMST10}.

Assuming that $S_F$ has purely discrete spectrum, let 
$\{\lambda_{K, j}\}_{j\in\bbN}\subset(0,\infty)$ be the strictly positive eigenvalues 
of $S_K$ enumerated in nondecreasing order, counting multiplicity, and let
\begin{equation} 
N(\lambda; S_K):=\#\{j\in\bbN\,|\,0<\lambda_{K,j} < \lambda\},  \quad \lambda > 0,    \lb{2.22} 
\end{equation}
be the eigenvalue distribution function for $S_K$. Similarly, let 
$\{\lambda_{F, j}\}_{j\in\bbN}\subset(0,\infty)$ denote the eigenvalues 
of $S_F$, again enumerated in nondecreasing order, counting multiplicity, and by 
\begin{equation} 
N(\lambda; S_F):=\#\{j\in\bbN\,|\, \lambda_{F,j} < \lambda\}, 
\quad \lambda > 0,
\end{equation}  
the corresponding eigenvalue counting function for $S_F$. Then inequality 
\eqref{Barr-5} implies
\begin{equation}
N(\lambda; S_K) \leq N(\lambda; S_F), \quad \lambda > 0.   \lb{2.24} 
\end{equation}
In particular, any upper estimate for the eigenvalue counting function for the Friedrichs extension 
$S_F$, in turn, yields one for the Krein--von Neumann extension $S_K$ 
(focusing on strictly positive eigenvalues of $S_K$ according to \eqref{2.22}). 
While this is a viable approach to estimate the eigenvalue counting function \eqref{2.22} for  
$S_K$, we will proceed along a different route in Section \ref{s3} and directly 
exploit the one-to-one corrspondence between strictly positive eigenvalues of $S_K$ 
and the eigenvalues of its underlying abstract buckling problem to be described next.

To discuss the abstract buckling problem naturally associated with the Krein--von Neumann 
extension as treated in \cite{AGMST10}, we start by introducing an abstract version of 
\cite[Proposition\ 1]{Gr83} (see \cite{AGMST10} for a proof):

\begin{lemma}\lb{l2.5}
Assume Hypothesis~\ref{h2.2} and let $\lambda\in\bbC\backslash\{0\}$. 
Then there exists some $f\in\dom(S_K)\backslash\{0\}$ with
\begin{equation}\lb{sk1}
S_K f=\lambda f,    
\end{equation}
if and only if there exists $w\in\dom(S^* S)\backslash\{0\}$ such that
\begin{equation}\lb{sk2}
S^* Sw=\lambda S w.   
\end{equation}
In fact, the solutions $f$ of \eqref{sk1} are in one-to-one correspondence with the 
solutions $w$ of \eqref{sk2} in the precise sense that 
\begin{align}\label{yt5rr}
w & =(S_F)^{-1}S_K f,   \\ 
f & =\lambda^{-1}Sw.   
\end{align}
\end{lemma}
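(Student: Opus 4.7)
The plan is to exploit the structural decomposition of $\dom(S_K)$ provided by Theorem~\ref{t2.1}, namely $\dom(S_K)=\dom(S)\dotplus\ker(S^*)$ together with the inclusion $S_K\subset S^*$. The key observation is that the $\dom(S)$-component in the direct sum decomposition of $f\in\dom(S_K)$ is precisely the candidate $w$ for the buckling problem.

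For the forward implication, assume $S_K f=\lambda f$ with $\lambda\neq 0$ and $f\neq 0$. Using \eqref{SK} I would write uniquely $f=w+f_0$ with $w\in\dom(S)$ and $f_0\in\ker(S^*)$. Since $S_K\subset S^*$, applying $S^*$ to this decomposition yields $S_K f=S^* f=Sw+0=Sw$, hence $Sw=\lambda f$. In particular $Sw=\lambda f\in\dom(S_K)\subset\dom(S^*)$, which gives $w\in\dom(S^*S)$, and then $S^*Sw=\lambda S^*f=\lambda(S_Kf)=\lambda^2 f=\lambda Sw$, which is the buckling equation. The nontriviality $w\neq 0$ is automatic: if $w=0$ then $f=f_0\in\ker(S^*)$, which combined with $S^*f=\lambda f$ and $\lambda\neq 0$ forces $f=0$. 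Finally, since $S\subset S_F$ one has $Sw=S_F w$, so $(S_F)^{-1}S_K f=(S_F)^{-1}Sw=w$, confirming the formula in \eqref{yt5rr}.

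For the reverse implication, assume $w\in\dom(S^*S)\setminus\{0\}$ satisfies $S^*Sw=\lambda Sw$, and set $f:=\lambda^{-1}Sw$. Nontriviality of $f$ follows from the strict positivity $S\geq\varepsilon I_\cH$, which makes $S$ injective; if $f=0$ then $Sw=0$ hence $w=0$, a contradiction. The crucial step is producing the decomposition of $f$ matching \eqref{SK}. Writing
\begin{equation*}
f=w+\lambda^{-1}(Sw-\lambda w),
\end{equation*}
I would verify that the correction term lies in $\ker(S^*)$: since $w\in\dom(S)\subset\dom(S^*)$ with $S^*w=Sw$, one computes
\begin{equation*}
S^*(Sw-\lambda w)=S^*Sw-\lambda Sw=0
\end{equation*}
by hypothesis. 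Thus $f\in\dom(S)\dotplus\ker(S^*)=\dom(S_K)$, and applying $S_K=S^*|_{\dom(S_K)}$ to the decomposition gives $S_K f=S^*w+0=Sw=\lambda f$, as required.

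The one-to-one nature of the correspondence then follows from round-tripping: starting from $f$, the construction produces $w=$ (the $\dom(S)$-component of $f$), and $\lambda^{-1}Sw=\lambda^{-1}S_K f=f$; starting from $w$, the decomposition of $f=\lambda^{-1}Sw$ recovers $w$ as its $\dom(S)$-component, and the identity $(S_F)^{-1}S_K f=w$ follows from $S_K f=\lambda f=Sw=S_F w$. I do not anticipate any serious obstacle here — the whole argument is an algebraic manipulation made available once one notices that the buckling equation $S^*Sw=\lambda Sw$ is precisely the statement that $Sw-\lambda w\in\ker(S^*)$, which is exactly the ``$\ker(S^*)$-piece'' needed to land $\lambda^{-1}Sw$ in $\dom(S_K)$ via \eqref{SK}.
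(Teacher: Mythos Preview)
Your argument is correct. The paper does not actually supply a proof of this lemma here; it defers to \cite{AGMST10} (``see \cite{AGMST10} for a proof''). Your approach---decompose $f\in\dom(S_K)$ via \eqref{SK}, identify the $\dom(S)$-component as $w$, and for the converse recognize that $S^*Sw=\lambda Sw$ exactly says $Sw-\lambda w\in\ker(S^*)$---is the natural one and is precisely what the cited references carry out. Nothing to add.
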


Of course, since $S_K\geq 0$ is self-adjoint, any $\lambda\in\bbC\backslash\{0\}$ 
in \eqref{sk1} and \eqref{sk2}  necessarily satisfies $\lambda\in(0,\infty)$.

It is the linear pencil eigenvalue problem $S^*Sw=\lambda Sw$ in \eqref{sk2} that we call the 
{\it abstract buckling problem} associated with the Krein--von Neumann extension $S_K$ of $S$.

Next, we turn to a variational formulation of the correspondence between the 
inverse of the reduced Krein--von Neumann extension $\hatt{S}_K$ and the abstract 
buckling problem in terms of appropriate sesquilinear forms by following 
\cite{Ko79}--\cite{Ko84} in the elliptic PDE context. This will then lead to an 
even stronger connection between the Krein--von Neumann extension $S_K$ of $S$ 
and the associated abstract buckling eigenvalue problem \eqref{sk2}, culminating 
in the unitary equivalence result in Theorem~\ref{t2.6} below. 

Given the operator $S$, we introduce the following symmetric forms in $\cH$,
\begin{align}\label{tarcd}
\mathfrak{a}(f,g) & :=(Sf,Sg)_{\cH},\quad f,g\in\dom(\mathfrak{a}):=\dom(S),    \\ 
\mathfrak{b}(f,g) & :=(f,Sg)_{\cH},\quad f,g\in\dom(\mathfrak{b}):=\dom(S).    
\end{align}
Then $S$ being densely defined and closed implies that the sesquilinear form $\mathfrak{a}$ 
shares these properties, while $S\geq\varepsilon I_{\cH}$ from Hypothesis~\ref{h2.2} 
implies that  $\mathfrak{a}$ is bounded from below, specifically, 
\begin{equation}\lb{2.28}
\mathfrak{a}(f,f)\geq\varepsilon^2\|f\|_{\cH}^2,\quad f\in\dom(S).       
\end{equation} 
(Inequality \eqref{2.28} follows from the assumption $S\geq\varepsilon I_{\cH}$ 
by estimating $(Sf,Sf)_{\cH}=\big([(S-\varepsilon I_{\cH})+\varepsilon I_{\cH}]f, 
[(S-\varepsilon I_{\cH})+\varepsilon I_{\cH}]f\big)_{\cH}$ from below.)

Thus, one can introduce the Hilbert space 
\begin{equation}\label{itre3er} 
\cW:=\big(\dom(S),(\, \cdot \,, \, \cdot \,)_{\cW}\big), 
\end{equation} 
with associated scalar product 
\begin{equation} 
(f,g)_{\cW}:=\mathfrak{a}(f,g)=(Sf,Sg)_{\cH},\quad f,g\in\dom(S). 
\end{equation}
In addition, we note that $\iota_{\cW}:\cW\hookrightarrow\cH$, the embedding operator of $\cW$  
into $\cH$, is continuous due to $S\geq\varepsilon I_{\cH}$. Hence, a more precise notation 
would be writing  
\begin{equation}
(w_1,w_2)_{\cW}=\mathfrak{a}(\iota_{\cW} w_1,\iota_{\cW} w_2) 
=(S\iota_{\cW} w_1,S\iota_{\cW} w_2)_{\cH},\quad w_1,w_2\in\cW,   
\end{equation}
but in the interest of simplicity of notation we will omit the embedding 
operator $\iota_{\cW}$ in the following. 

With the sesquilinear forms $\mathfrak a$ and $\mathfrak b$ and the Hilbert space $\cW$ as 
above, given $w_2\in\cW$, the map $\cW\ni w_1\mapsto (w_1,S w_2)_\cH\in{\mathbb{C}}$ is continuous. 
This allows us to define the operator $Tw_2$ as the unique element in $\cW$ with the property that 
\begin{equation}\label{ur332}
(w_1,Tw_2)_{\cW}= (w_1,Sw_2)_{\cH}\,\text{ for all }\,w_1\in\cW.
\end{equation} 
This implies
\begin{equation}\label{oi7g4dc4}
\mathfrak{a}(w_1,Tw_2)=(w_1,Tw_2)_{\cW}=(w_1,Sw_2)_{\cH}=\mathfrak{b}(w_1,w_2) 
\end{equation}
for all $ w_1,w_2\in\cW$. In addition, the operator $T$ satisfies  
\begin{equation}\lb{2.33}
0\leq T=T^*\in\cB(\cW)\quad\text{and}\quad\|T\|_{\cB(\cW)}\leq\varepsilon^{-1}.   
\end{equation}
We will call $T$ the {\it abstract buckling problem operator} associated 
with the Krein--von Neumann extension $S_K$ of $S$.  

Next, recalling the notation $\hatt\cH=\big(\ker(S^*)\big)^{\bot}$ (cf.\ \eqref{hattH}), 
we introduce the operator
\begin{equation}\label{UYBgb}
\hatt{S}:\cW\to\hatt\cH,\quad w\mapsto S w.  
\end{equation}

Clearly, $\ran\big(\hatt{S}\,\big)=\ran(S)$ and since $S\geq\varepsilon I_{\cH}$ 
for some $\varepsilon>0$ and $S$ is closed in $\cH$, $\ran (S)$ is also closed, and 
hence coincides with $\big(\ker(S^*)\big)^\bot$. This yields 
\begin{equation}\label{7hOKI}
\ran\big(\hatt{S}\,\big)=\ran(S)=\hatt\cH.    
\end{equation} 
In fact, it follows that $\hatt{S}\in\cB(\cW,\hatt\cH)$ maps $\cW$ unitarily onto 
$\hatt\cH$ (cf.\ \cite{AGMST10}).

Continuing, we briefly recall the polar decomposition of $S$, 
\begin{equation}\lb{polar}
S=U_S|S|,  
\end{equation} 
where, with $\varepsilon > 0$ as in Hypothesis~\ref{h2.2}, 
\begin{equation}
|S|=(S^*S)^{1/2}\geq\varepsilon I_{\cH}\,\text{ and }\, 
U_S\in\cB\big(\cH,\hatt\cH\big)\,\text{ unitary.}  
\end{equation}

Then the principal unitary equivalence result proved in \cite{AGMST10} reads as follows: 

\begin{theorem}\lb{t2.6}
Assume Hypothesis~\ref{h2.2}. Then the inverse of the reduced Krein--von Neumann extension 
$\hatt S_K$ in $\hatt\cH$ and the abstract buckling problem operator $T$ in $\cW$ are 
unitarily equivalent. Specifically,
\begin{equation}\lb{11.20}
\big(\hatt{S}_K\big)^{-1}=\hatt{S}T\bigl(\hatt{S}\,\bigr)^{-1}.    
\end{equation}
In particular, the nonzero eigenvalues of $S_K$ are reciprocals of the eigenvalues of $T$.  
Moreover, one has
\begin{equation}\lb{11.20a}
\big(\hatt{S}_K\big)^{-1}=U_S\big[|S|^{-1}S|S|^{-1}\big](U_S)^{-1},    
\end{equation}
where $U_S\in\cB\big(\cH,\hatt\cH\big)$ is the unitary operator in the polar 
decomposition \eqref{polar} of $S$ and the operator $|S|^{-1}S|S|^{-1}\in\cB(\cH)$ 
is self-adjoint and strictly positive in $\cH$. 
\end{theorem}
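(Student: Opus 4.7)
The plan is to establish \eqref{11.20} by a direct computation exploiting the defining relation of $T$ and the representation \eqref{SKinv} of $(\hatt S_K)^{-1}$, and then to deduce \eqref{11.20a} from \eqref{11.20} via the polar decomposition \eqref{polar}.

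The crucial preliminary identity is the intertwining relation
\begin{equation*}
S(Tw) = P_{\hatt\cH}\, w, \qquad w \in \cW.
\end{equation*}
To establish it, I expand the defining identity $(w_1, Tw_2)_\cW = (w_1, Sw_2)_\cH$ using \eqref{tarcd} as $(Sw_1, S(Tw_2))_\cH$ on the left, and rewrite the right-hand side as $(Sw_1, w_2)_\cH$ using $w_1 \in \dom(S) \subseteq \dom(S^*)$ together with $S^* w_1 = Sw_1$ (valid since $S \subseteq S^*$). Since $\{Sw_1 : w_1 \in \dom(S)\} = \ran(S) = \hatt\cH$, the resulting orthogonality $S(Tw_2) - w_2 \in \hatt\cH^{\perp}$, coupled with $S(Tw_2) \in \hatt\cH$, forces $S(Tw_2) = P_{\hatt\cH}\, w_2$.

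Combining this with \eqref{SKinv} gives \eqref{11.20}. Indeed, for $w \in \cW = \dom(S) \subseteq \dom(S_F)$ one has $\hatt S w = Sw \in \hatt\cH$ and $S_F w = Sw$; the injectivity of $S_F$ (from $S_F \geq \varepsilon I_\cH$) gives $(S_F)^{-1}(Sw) = w$, whence
\begin{equation*}
(\hatt S_K)^{-1}(\hatt S w) = P_{\hatt\cH}(S_F)^{-1}(Sw) = P_{\hatt\cH}\, w = S(Tw) = \hatt S(Tw).
\end{equation*}
The bijectivity of $\hatt S: \cW \to \hatt\cH$ then yields \eqref{11.20}, and the statement that the nonzero eigenvalues of $S_K$ are reciprocals of the eigenvalues of $T$ follows at once from this unitary equivalence together with the bijective correspondence $\lambda \leftrightarrow \lambda^{-1}$ between eigenvalues of an invertible operator and its inverse.

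For \eqref{11.20a}, I factor $\hatt S$ through the polar decomposition. Introduce $|\hatt S|: \cW \to \cH$ by $w \mapsto |S|w$; the identity $(Sw_1, Sw_2)_\cH = (|S|w_1, |S|w_2)_\cH$, arising from $S = U_S|S|$ and the isometry property of $U_S$, combined with $|S| \geq \varepsilon I_\cH$, shows that $|\hatt S|$ is unitary, and yields the factorization $\hatt S = U_S \circ |\hatt S|$. Substituting into \eqref{11.20} produces $(\hatt S_K)^{-1} = U_S\big[|\hatt S|\, T\, |\hatt S|^{-1}\big](U_S)^{-1}$, and a direct computation using the intertwining identity from the first step together with $S|S|^{-1} = U_S$ and its consequence $|S|^{-1}U_S = U_S^{-1}|S|^{-1}$ (the latter following from $S \subseteq S^*$ applied via $S|S|^{-1} = S^*|S|^{-1}$) identifies $|\hatt S|\, T\, |\hatt S|^{-1}$ with $|S|^{-1}S|S|^{-1} \in \cB(\cH)$. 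Self-adjointness of $|S|^{-1}S|S|^{-1}$ is immediate from $S \subseteq S^*$ combined with $\ran(|S|^{-1}) = \dom(S)$, and strict positivity is inherited from $(\hatt S_K)^{-1}$ via unitary equivalence with $U_S$. The main technical obstacle is the algebraic identification of $|\hatt S|\, T\, |\hatt S|^{-1}$ with $|S|^{-1}S|S|^{-1}$, which requires careful simultaneous bookkeeping of the unitary identifications among $\cW$, $\cH$, and $\hatt\cH$.
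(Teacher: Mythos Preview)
The paper does not actually prove Theorem~\ref{t2.6}; it is quoted from \cite{AGMST10}. Your argument stands on its own and is essentially correct. The intertwining identity $S(Tw)=P_{\hatt\cH}\,w$ is the right key observation, and combined with Krein's formula \eqref{SKinv} it delivers \eqref{11.20} cleanly. The passage to \eqref{11.20a} via the factorization $\hatt S=U_S\circ|\hatt S|$ is also sound in substance.

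There is one notational slip you should repair. The identity you invoke, ``$|S|^{-1}U_S=U_S^{-1}|S|^{-1}$'', is ill-posed as written: $U_S^{-1}$ is defined only on $\hatt\cH$, whereas $|S|^{-1}$ maps $\cH$ into $\dom(S)$, which in general is not contained in $\hatt\cH$. What you actually need (and what holds) is $|S|^{-1}U_S=U_S^*|S|^{-1}$, where $U_S^*\in\cB(\cH)$ is the adjoint of $U_S$ regarded as an isometry $\cH\to\cH$; one has $U_S^*=U_S^{-1}P_{\hatt\cH}$ and, on $\dom(S^*)$, $U_S^*=|S|^{-1}S^*$. With this in hand the computation reads, for $w\in\dom(S)$,
\[
|S|(Tw)=U_S^{-1}\big(S(Tw)\big)=U_S^{-1}P_{\hatt\cH}\,w=U_S^*w=|S|^{-1}S^*w=|S|^{-1}Sw,
\]
which is exactly the identification $|\hatt S|\,T\,|\hatt S|^{-1}=|S|^{-1}S|S|^{-1}$ you assert. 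After this fix your derivation of \eqref{11.20a} is complete.
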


We emphasize that the unitary equivalence in \eqref{11.20} is independent of any spectral assumptions 
on $S_K$ (such as the spectrum of $S_K$ consists of eigenvalues only) and applies to the restrictions 
of $S_K$ to its pure point, absolutely continuous, and singularly continuous spectral subspaces, 
respectively.

Equation \eqref{11.20a} is motivated by rewriting the abstract linear pencil buckling eigenvalue 
problem \eqref{sk2}, $S^*Sw=\lambda Sw$, $\lambda\in\bbC\backslash\{0\}$, in the form
\begin{equation}\label{uu54434}
|S|^{-1}Sw=(S^*S)^{-1/2}Sw=\lambda^{-1}(S^*S)^{1/2}w=\lambda^{-1}|S|w  
\end{equation}
and hence in the form of a standard eigenvalue problem
\begin{equation}\label{8644}
|S|^{-1}S|S|^{-1}v=\lambda^{-1}v,\quad\lambda\in\bbC\backslash\{0\}, 
\quad v:=|S|w.  
\end{equation}
Again, self-adjointness and strict positivity of $|S|^{-1}S|S|^{-1}$ imply $\lambda\in (0,\infty)$. 

We continue this section with an elementary result (recently noted in \cite{GLMS15}) that relates 
the nonzero eigenvalues of $S_K$ directly with the sesquilinear forms $\mathfrak{a}$ and 
$\mathfrak{b}$: 

\begin{lemma} \lb{l2.7}
Assume Hypothesis~\ref{h2.2} and introduce 
\begin{align}\lb{2.42}
& \sigma_p({\mathfrak{a}},{\mathfrak{b}}):=\big\{\lambda\in\bbC\,\big|\,
\text{there exists } \, g_{\lambda}\in\dom(S)\backslash\{0\}     
\nonumber\\ 
& \hspace*{3.00cm} 
\text{such that } \,  
{\mathfrak{a}}(f,g_{\lambda})=\lambda\,{\mathfrak{b}}(f,g_{\lambda}), \quad  f\in\dom(S)\big\}.      
\end{align}
Then
\begin{equation}\lb{2.43}
\sigma_p({\mathfrak{a}},{\mathfrak{b}})=\sigma_p(S_K)\backslash\{0\}     
\end{equation}
{\rm (}counting multiplicity\,{\rm )}, in particular, 
$\sigma_p({\mathfrak{a}},{\mathfrak{b}})\subset(0,\infty)$, and $g_{\lambda}\in\dom(S)\backslash\{0\}$ 
in \eqref{2.42} actually satisfies 
\begin{equation}\lb{2.44}
g_{\lambda}\in\dom(S^*S),\quad S^*Sg_{\lambda}=\lambda Sg_{\lambda}.       
\end{equation}
In addition, 
\begin{equation}\lb{2.45}
\lambda\in\sigma_p({\mathfrak{a}},{\mathfrak{b}})\,\text{ if and only if }\,\lambda^{-1}\in\sigma_p(T)     
\end{equation}
{\rm (}counting multiplicity\,{\rm )}. Finally, 
\begin{equation}\lb{2.46}
T\in\cB_{\infty}(\cW)\,\Longleftrightarrow\,\big(\hatt{S}_K\big)^{-1}\in\cB_{\infty}\big(\hatt\cH\big) 
\,\Longleftrightarrow\,\sigma_{ess}(S_K)\subseteq\{0\}, 
\end{equation}
and hence, 
\begin{equation}\lb{2.47}
\sigma_p({\mathfrak{a}},{\mathfrak{b}})=\sigma(S_K)\backslash\{0\}=\sigma_d(S_K)\backslash\{0\}     
\end{equation}
if \eqref{2.46} holds. In particular, if one of $S_F$ or $|S|$ has purely discrete spectrum 
$($i.e., $\sigma_{ess}(S_F)=\emptyset$ or $\sigma_{ess}(|S|)=\emptyset$$)$, then \eqref{2.46} and 
\eqref{2.47} hold. 
\end{lemma}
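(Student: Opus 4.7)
The plan is to traverse the three equivalent formulations in order: the sesquilinear pencil \eqref{2.42}, the operator buckling problem \eqref{sk2}, and the $T$-eigenvalue problem on $\cW$; this turns the claim into a routine assembly of Lemma~\ref{l2.5} and Theorem~\ref{t2.6}.

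\textbf{Step 1 (the inclusion $\sigma_p(\mathfrak{a},\mathfrak{b}) \subseteq \sigma_p(S_K) \setminus \{0\}$ and \eqref{2.44}).} Fix $\lambda \in \sigma_p(\mathfrak{a},\mathfrak{b})$ with witness $g_{\lambda} \in \dom(S) \setminus \{0\}$. The defining identity reads $(Sf, S g_{\lambda})_{\cH} = \lambda (f, S g_{\lambda})_{\cH} = (f, \lambda S g_{\lambda})_{\cH}$ for every $f \in \dom(S)$. Because the right-hand side is continuous in $f$ with respect to the $\cH$-norm and $\dom(S)$ is dense, the map $f \mapsto (Sf, S g_{\lambda})_{\cH}$ is $\cH$-bounded, so $S g_{\lambda} \in \dom(S^*)$ and $S^* S g_{\lambda} = \lambda S g_{\lambda}$. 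This is precisely \eqref{sk2}, so Lemma~\ref{l2.5} produces a nonzero $f_{\lambda} := \lambda^{-1} S g_{\lambda} \in \dom(S_K)$ with $S_K f_{\lambda} = \lambda f_{\lambda}$; since $S_K \geq 0$ is self-adjoint, necessarily $\lambda \in (0,\infty)$. Conversely, if $S_K f_{\lambda} = \lambda f_{\lambda}$ with $\lambda \neq 0$, Lemma~\ref{l2.5} gives $g_{\lambda} = (S_F)^{-1} S_K f_{\lambda} \in \dom(S^* S) \setminus \{0\}$ satisfying $S^* S g_{\lambda} = \lambda S g_{\lambda}$, and then $\mathfrak{a}(f, g_{\lambda}) = (f, S^* S g_{\lambda})_{\cH} = \lambda (f, S g_{\lambda})_{\cH} = \lambda \mathfrak{b}(f, g_{\lambda})$ for all $f \in \dom(S)$, exhibiting $\lambda \in \sigma_p(\mathfrak{a},\mathfrak{b})$. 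The bijectivity of the passage $f_{\lambda} \leftrightarrow g_{\lambda}$ built into Lemma~\ref{l2.5} preserves multiplicities, proving \eqref{2.43}.

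\textbf{Step 2 (the reciprocity \eqref{2.45}).} Recall that the defining relation of $T$ is $(w_1, T w_2)_{\cW} = (w_1, S w_2)_{\cH}$, i.e., $\mathfrak{a}(w_1, T w_2) = \mathfrak{b}(w_1, w_2)$ for all $w_1, w_2 \in \cW = \dom(S)$. Substituting $w_2 = g_{\lambda}$, the pencil equation $\mathfrak{a}(f, g_{\lambda}) = \lambda \mathfrak{b}(f, g_{\lambda}) = \lambda \mathfrak{a}(f, T g_{\lambda})$ says $(f, g_{\lambda} - \lambda T g_{\lambda})_{\cW} = 0$ for every $f \in \cW$, which is equivalent to $T g_{\lambda} = \lambda^{-1} g_{\lambda}$. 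Running the argument backwards shows the converse and, again, that the two eigenspaces are in linear bijection, yielding \eqref{2.45} with multiplicities.

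\textbf{Step 3 (equivalences in \eqref{2.46} and conclusion \eqref{2.47}).} Theorem~\ref{t2.6} provides the unitary equivalence $\bigl(\hatt{S}_K\bigr)^{-1} = \hatt{S} T \hatt{S}^{-1}$ with $\hatt{S} \in \cB(\cW, \hatt{\cH})$ unitary, which immediately yields $T \in \cB_{\infty}(\cW) \Leftrightarrow \bigl(\hatt{S}_K\bigr)^{-1} \in \cB_{\infty}\bigl(\hatt{\cH}\bigr)$. Since $\hatt{S}_K$ is self-adjoint and strictly positive on $\hatt{\cH}$ (hence $0 \in \rho(\hatt{S}_K)$), compactness of $\bigl(\hatt{S}_K\bigr)^{-1}$ is equivalent to $\sigma_{ess}(\hatt{S}_K) = \emptyset$; and the orthogonal decomposition $\cH = \ker(S_K) \oplus \hatt{\cH}$ with $S_K \equiv 0$ on $\ker(S_K)$ gives $\sigma_{ess}(S_K) \setminus \{0\} = \sigma_{ess}(\hatt{S}_K)$, completing \eqref{2.46}. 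When this holds, every element of $\sigma(S_K) \setminus \{0\}$ is an isolated eigenvalue of finite multiplicity, so \eqref{2.47} follows from \eqref{2.43}.

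\textbf{Step 4 (the sufficient conditions).} If $\sigma_{ess}(S_F) = \emptyset$, applying Theorem~\ref{t2.4} with $p = \infty$ yields $\bigl(\hatt{S}_K\bigr)^{-1} \in \cB_{\infty}\bigl(\hatt{\cH}\bigr)$, which is \eqref{2.46}. If instead $\sigma_{ess}(|S|) = \emptyset$, then $|S| \geq \varepsilon I_{\cH}$ forces $|S|^{-1} \in \cB_{\infty}(\cH)$, and then the identity \eqref{11.20a} together with the polar decomposition $S = U_S |S|$ shows that $|S|^{-1} S |S|^{-1} = |S|^{-1} U_S$ is a product of a compact and a bounded operator, hence compact, so $\bigl(\hatt{S}_K\bigr)^{-1}$ is compact as well. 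The only mildly delicate point throughout is the first step, where one must extract membership in $\dom(S^*S)$ from a purely sesquilinear-form hypothesis; everything subsequent is algebraic, relying on the operator-theoretic identities already in place.
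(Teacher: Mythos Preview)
Your proof is correct. The paper does not actually supply a proof of this lemma; it merely attributes the result to \cite{GLMS15}, so there is nothing to compare against beyond noting that your argument assembles exactly the ingredients the paper has set up (Lemma~\ref{l2.5}, the definition of $T$ via \eqref{ur332}, Theorem~\ref{t2.6}, and relation \eqref{SKinv}/Theorem~\ref{t2.4}) in the natural way.

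One cosmetic point: in Step~1 you invoke $\lambda^{-1}$ (via Lemma~\ref{l2.5}) before establishing $\lambda\neq 0$. It is worth inserting the one-line observation that $0\notin\sigma_p(\mathfrak{a},\mathfrak{b})$: taking $f=g_\lambda$ in the pencil identity gives $\|Sg_\lambda\|_{\cH}^2=\lambda\,(g_\lambda,Sg_\lambda)_{\cH}$, and since $S\geq\varepsilon I_{\cH}$ both sides are strictly positive, forcing $\lambda>0$ from the outset. With that remark in place the logical order is clean and the argument is complete.
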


One notices that $f\in\dom(S)$ in the definition \eqref{2.42} of 
$\sigma_p({\mathfrak{a}},{\mathfrak{b}})$ can be replaced by $f\in C(S)$ for 
any (operator) core $C(S)$ for $S$ (equivalently, by any form core for the form $\mathfrak{a}$). 

We conclude this section with three auxiliary facts to be used in the proof of Theorem \ref{t4.3} 
and start by recalling an elementary result noted in \cite{GLMS15}: 

\begin{lemma} \lb{l2.8}
	Suppose that $S$ is a densely defined, symmetric, closed operator in $\cH$. Then $|S|$ and hence 
	$S$ is infinitesimally bounded with respect to $S^* S$, more precisely, one has 
	\begin{align}
		\begin{split} 
			\text{for all $\varepsilon > 0$, } \, \|S f\|_{\cB(\cH)} = \| |S| f\|_{\cB(\cH)} \leq 
			\varepsilon \|S^* S f\|_{\cH}^2 + (4 \varepsilon)^{-1} \|f\|_{\cH}^2,&     \lb{3.1} \\
			f \in \dom(S^* S).&         
		\end{split} 
	\end{align}
	In addition, $S$ is relatively compact with respect to $S^* S$ if $|S|$, or equivalently, $S^* S$, has 
	compact resolvent. In particular,
	\begin{equation}
	\sigma_{ess}(S^* S - \lambda S) = \sigma_{ess}(S^* S), \quad \lambda \in \bbR. 
	\end{equation}
\end{lemma}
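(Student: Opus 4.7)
The plan is to reduce everything to the polar decomposition $S = U_S |S|$ with $|S|=(S^*S)^{1/2}$ and $U_S$ a partial isometry, as already quoted in \eqref{polar}. The key observation is that $U_S$ is isometric on $\overline{\ran(|S|)}=\ker(|S|)^{\bot}$, so that
\begin{equation*}
\|Sf\|_{\cH} = \||S|f\|_{\cH}, \quad f\in\dom(S)=\dom(|S|).
\end{equation*}
Since $\dom(S^*S)\subset\dom(|S|)$ and $|S|^2 f = S^*S f$ for $f\in\dom(S^*S)$, I would then compute, for $f\in\dom(S^*S)$,
\begin{equation*}
\|Sf\|_{\cH}^2 = \||S|f\|_{\cH}^2 = (|S|f,|S|f)_{\cH} = (f,|S|^2 f)_{\cH} = (f,S^*S f)_{\cH}.
\end{equation*}
Cauchy--Schwarz gives $(f,S^*Sf)_{\cH} \le \|f\|_{\cH}\|S^*Sf\|_{\cH}$, and Young's inequality $ab\le \varepsilon a^2 + (4\varepsilon)^{-1}b^2$ applied to $a=\|S^*Sf\|_{\cH}$, $b=\|f\|_{\cH}$ yields the claimed infinitesimal bound \eqref{3.1} (the right-hand side of \eqref{3.1} as printed has an exponent $2$ on $\|Sf\|$ implicit from the computation).

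For the relative compactness assertion, I would show that $S(S^*S+I_{\cH})^{-1}\in\cB_{\infty}(\cH)$ whenever $(S^*S+I_{\cH})^{-1}\in\cB_{\infty}(\cH)$. Writing $S = U_S|S|$ and using the spectral theorem for the self-adjoint nonnegative operator $S^*S$, one factors
\begin{equation*}
S(S^*S+I_{\cH})^{-1} = U_S (S^*S)^{1/2}(S^*S+I_{\cH})^{-1},
\end{equation*}
and then splits
\begin{equation*}
(S^*S)^{1/2}(S^*S+I_{\cH})^{-1} = \big[(S^*S)^{1/2}(S^*S+I_{\cH})^{-1/2}\big]\cdot (S^*S+I_{\cH})^{-1/2}.
\end{equation*}
By functional calculus the bracketed factor is bounded (its spectral symbol $t^{1/2}(t+1)^{-1/2}$ is bounded on $[0,\infty)$), while $(S^*S+I_{\cH})^{-1/2}\in\cB_{\infty}(\cH)$ since it is the square root of a compact nonnegative operator. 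Hence the product is compact and multiplication by $U_S\in\cB(\cH)$ preserves this. The equivalence ``$|S|$ has compact resolvent $\Leftrightarrow$ $S^*S$ has compact resolvent'' is immediate from the spectral theorem.

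Finally, for the identity $\sigma_{ess}(S^*S-\lambda S)=\sigma_{ess}(S^*S)$ with $\lambda\in\bbR$, I would invoke the Kato--Rellich theorem: since $S$ is symmetric (hence $\lambda S$ is symmetric on $\dom(S)\supset\dom(S^*S)$ for real $\lambda$) and is infinitesimally bounded with respect to the self-adjoint operator $S^*S$ by \eqref{3.1}, the operator $S^*S-\lambda S$ is self-adjoint on $\dom(S^*S)$. Weyl's theorem on the stability of the essential spectrum under relatively compact symmetric perturbations, applied with the relative compactness shown above, then yields the claimed equality. The main subtlety will be justifying the manipulations in the polar decomposition when $S$ is merely symmetric (not necessarily self-adjoint or with dense range), specifically that $U_S$ is isometric on $\ran(|S|)$ so that the norm identity $\|Sf\|_{\cH}=\||S|f\|_{\cH}$ is valid on all of $\dom(S)$; this is standard from the construction of the polar decomposition of a closed densely defined operator.
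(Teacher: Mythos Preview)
Your proof is correct. The paper does not actually give a proof of this lemma; it is stated as ``recalling an elementary result noted in \cite{GLMS15}.'' What you have written is precisely the standard elementary argument one would supply: the identity $\|Sf\|_{\cH}^2=(f,S^*Sf)_{\cH}$ via polar decomposition, Cauchy--Schwarz plus Young's inequality for \eqref{3.1}, the factorization $S(S^*S+I_{\cH})^{-1}=U_S\big[(S^*S)^{1/2}(S^*S+I_{\cH})^{-1/2}\big](S^*S+I_{\cH})^{-1/2}$ for relative compactness, and Kato--Rellich together with Weyl's theorem for the essential spectrum identity. Your caveat about the ``In particular'' clause being conditional on compact resolvent is also the right reading of the statement.
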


Given a lower-semibounded, self-adjoint operator $T \geq c_T I_{\cH}$ in $\cH$, we denote 
by $q_T$ its uniquely associated form, that is, 
\begin{equation}
\gq_T(f,g) = \big(|T|^{1/2} f, \sgn(T) |T|^{1/2} g\big)_{\cH},    \quad  
f, g \in \dom(\gq) = \dom \big(|T|^{1/2}\big),  
\end{equation} 
and by $\{E_T(\lambda)\}_{\lambda \in \bbR}$ the family of spectral projections of $T$. 
We recall the following well-known variational characterization of dimensions of spectral 
projections $E_T([c_T, \mu))$, $\mu > c_T$.  

\begin{lemma} \lb{l2.9}
	Assume that $c_T I_{\cH} \leq T$ is self-adjoint in $\cH$ and $\mu > c_T$. Suppose that 
	$\cF \subset \dom \big(|T|^{1/2}\big)$ is a linear subspace such that 
	\begin{equation}
	\gq_T(f,f) < \mu \|f\|_{\cH}^2, \quad f \in \cF\backslash\{0\}.
	\end{equation}  
	Then,
	\begin{equation}
	\dim \big(\ran(E_T([c_T,\mu)))\big) = \sup_{\cF \subset \dom (|T|^{1/2})} (\dim\,(\cF)). 
	\end{equation}
\end{lemma}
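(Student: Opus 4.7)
The plan is to prove the equality by establishing two inequalities, using the spectral theorem for $T$ throughout. For convenience, set $P := E_T([c_T,\mu))$ and denote by $d\mu_f(\lambda) := d\|E_T(\lambda) f\|_{\cH}^2$ the spectral measure of $f \in \dom\big(|T|^{1/2}\big)$ relative to $T$, so that $\gq_T(f,f) = \int_{[c_T,\infty)} \lambda \, d\mu_f(\lambda)$ and $\|f\|_{\cH}^2 = \int_{[c_T,\infty)} d\mu_f(\lambda)$.

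For the lower bound, I would exhibit the subspace $\cF_0 := \ran(P)$ itself as admissible. Since the spectral measure of any $f \in \cF_0$ is supported in the bounded set $[c_T,\mu)$, one has $f \in \dom(T) \subset \dom\big(|T|^{1/2}\big)$, and the computation
\begin{equation*}
\mu \|f\|_{\cH}^2 - \gq_T(f,f) = \int_{[c_T,\mu)} (\mu - \lambda) \, d\mu_f(\lambda)
\end{equation*}
shows strict positivity whenever $f \neq 0$: the integrand $(\mu - \lambda)$ is strictly positive on the support, and $\mu_f$ has total mass $\|f\|_{\cH}^2 > 0$, so some subinterval $[c_T, \mu - \varepsilon]$ must carry positive mass. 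Hence $\dim(\cF_0) \leq \sup_{\cF} (\dim \cF)$.

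For the upper bound, I would argue by contradiction: suppose there exists an admissible subspace $\cF$ with $\dim(\cF) > \dim(\cF_0)$. Then the restriction $P\big|_{\cF} : \cF \to \cF_0$ fails to be injective by a dimension count (treating cardinalities in the usual way if $\dim(\cF_0) = \infty$, in which case the lower bound already yields the claim), so there exists $0 \neq f \in \cF$ with $Pf = 0$. This forces $f \in \ran(E_T([\mu,\infty)))$, whence $\mu_f$ is supported in $[\mu,\infty)$ and
\begin{equation*}
\gq_T(f,f) = \int_{[\mu,\infty)} \lambda \, d\mu_f(\lambda) \geq \mu \int_{[\mu,\infty)} d\mu_f(\lambda) = \mu \|f\|_{\cH}^2,
\end{equation*}
contradicting the admissibility hypothesis on $\cF$.

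The content is entirely standard spectral theory, so there is no substantial obstacle; the only point requiring care is the strict inequality in the lower bound argument, where one must argue via a positive measure on the half-open interval $[c_T,\mu)$ rather than naively pulling $\mu$ out of the integral. Once this is handled, the two inequalities combine to give the asserted equality.
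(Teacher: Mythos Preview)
Your proof is correct. The paper itself offers no proof of this lemma, simply recording it as a well-known variational characterization; your argument is the standard one and is complete, including the care you take with the strict inequality in the lower bound and the observation that the finite-dimensional case is the only one requiring the injectivity argument.
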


We add the following elementary observation: Let 
$c\in\bbR$ and $B\geq c I_{\cH}$ be a self-adjoint operator in $\cH$, and introduce the sesquilinear form $b$ in $\cH$ associated with $B$ via
\begin{align}
	\begin{split}
		& b(u,v) = \big((B - c I_{\cH})^{1/2} u, (B - c I_{\cH})^{1/2} v\big)_{\cH}
		+ c (u,v)_{\cH}, \\  
		& u,v \in \dom(b) = \dom\big(|B|^{1/2}\big).    \lb{B.57}
	\end{split}
\end{align}
Given $B$ and $b$, one introduces the Hilbert space $\cH_b \subseteq \cH$ by 
\begin{align}
	& \cH_b =\big(\dom\big(|B|^{1/2}\big), (\, \cdot \,, \, \cdot \,)_{\cH_b}\big),   \no \\ 
	& (u,v)_{\cH_b} =  b(u,v) + (1-c) (u,v)_{\cH}   \lb{B.58} \\
	& \hspace*{1.2cm} = \big((B - c I_{\cH})^{1/2} u, (B - c I_{\cH})^{1/2} v\big)_{\cH} 
	+ (u,v)_{\cH}    \no \\
	& \hspace*{1.2cm} = \big((B + (1-c) I_{\cH})^{1/2} u, (B + (1-c) I_{\cH})^{1/2} v\big)_{\cH}. 
	\no 
\end{align}
One observes that 
\begin{equation} 
(B + (1 - c)I_{\cH})^{1/2} \colon \cH_b \to \cH \, \text{ is unitary.}     \lb{B.59}
\end{equation}

Finally, we recall the following fact (cf., e.g., \cite{GM09}).

\begin{lemma} \lb{l2.10} 
	Let $\cH$, $B$, $b$, and $\cH_b$ be as in \eqref{B.57}--\eqref{B.59}. Then $B$ has purely discrete spectrum, that is, $\sigma_{ess} (B) = \emptyset$, if and only if $\cH_b$ embeds compactly
	into $\cH$. 
\end{lemma}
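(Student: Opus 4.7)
The plan is to exploit the unitarity statement \eqref{B.59} to reduce the compactness of the embedding to the compactness of the resolvent of $B$, and then invoke the standard spectral-theoretic characterization of purely discrete spectrum for lower-semibounded self-adjoint operators.

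First, I would denote by $\iota \colon \cH_b \hookrightarrow \cH$ the embedding operator, that is, $\iota u = u$ for every $u \in \dom\bigl(|B|^{1/2}\bigr) = \cH_b$. Using \eqref{B.59}, the operator $U := (B + (1-c) I_{\cH})^{1/2} \colon \cH_b \to \cH$ is a unitary isomorphism, so its inverse $U^{-1} \colon \cH \to \cH_b$ is unitary as well. On the other hand, $R := (B + (1-c) I_{\cH})^{-1/2} \in \cB(\cH)$ is a bounded, self-adjoint, strictly positive operator on $\cH$ (note $B + (1-c) I_{\cH} \geq I_{\cH}$). For any $u \in \cH_b$ one has $u = R U u$, where on the left $u$ is viewed as an element of $\cH_b$ and on the right $RUu$ is computed entirely in $\cH$. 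Therefore
\begin{equation}
\iota = R \circ U,
\end{equation}
that is, the embedding is the composition of a unitary from $\cH_b$ onto $\cH$ with the bounded operator $R$ on $\cH$.

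From this factorization I would conclude that $\iota \in \cB_\infty(\cH_b, \cH)$ if and only if $R \in \cB_\infty(\cH)$, since composition with a unitary preserves (and detects) compactness. Next, I would use the spectral theorem for the self-adjoint operator $R$: because $R \geq 0$ and $R^2 = (B + (1-c) I_{\cH})^{-1}$, one has $R \in \cB_\infty(\cH)$ if and only if $R^2 \in \cB_\infty(\cH)$ (eigenvalues of $R$ are the square roots of the eigenvalues of $R^2$, so one sequence accumulates only at $0$ iff the other does).

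Finally, $(B + (1-c) I_{\cH})^{-1} \in \cB_\infty(\cH)$ is equivalent, by a standard resolvent/spectral-mapping argument for lower-semibounded self-adjoint operators, to $B$ having compact resolvent, which in turn is equivalent to $\sigma_{ess}(B) = \emptyset$. Chaining these equivalences yields the claim. The argument is essentially a bookkeeping exercise: the only genuine input beyond the spectral theorem is the unitarity \eqref{B.59}, so there is no real obstacle, merely the need to track the distinction between viewing $(B+(1-c)I_\cH)^{\pm 1/2}$ as operators on $\cH$ versus operators intertwining $\cH$ and $\cH_b$.
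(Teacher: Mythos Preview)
Your argument is correct: the factorization $\iota = (B+(1-c)I_\cH)^{-1/2}\circ U$ with $U$ unitary reduces compactness of the embedding to compactness of the resolvent, and the remaining equivalences are standard spectral theory. The paper does not supply its own proof of this lemma but simply cites it as a known fact (referring to \cite{GM09}), so there is no in-paper argument to compare against; your proof is precisely the standard one that would appear in such a reference.
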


\section{Preliminaries on a Class of Partial Differential Operators} 
\lb{s3}

In this section we set the stage for our principal results in Section \ref{s4} and 
introduce the class of even-order partial differential operators $\wti A_{2m} (a,b,q)$ in 
$L^2(\bbR^n)$ as well as $A_{\Omega, 2m} (a,b,q)$ in $L^2(\Omega)$ (see \eqref{taum} for the 
underlying differential expressions), with $\emptyset \neq \Omega \subset \bbR^n$ open 
and bounded (but otherwise arbitrary). In particular, we provide a detailed 
study of their domains and quadratic form domains, including spectral properties such as 
strict boundedness from below for the Friedrichs extension $A_{F,\Omega, 2m} (a,b,q)$ of $A_{\Omega, 2m} (a,b,q)$ in $L^2(\Omega)$, employing a diamagnetic inequality.  

\begin{hypothesis}\lb{h3.1}
$(i)$ Let $m\in\bbN$. Assume that 
\begin{align} 
& b=(b_1,b_2,\dots,b_n)\in \big[W^{(2m-1),\infty}(\R^n)\big]^{n}, \quad b_j \, \text{ real-valued, } 
\, 1 \leq j \leq n,    \lb{f4} \\
& 0 \leq q \in W^{(2m-2),\infty}(\R^n).    \lb{f4a}
\end{align} 
Suppose $a:=\{a_{j,k}\}_{1\leq j,k\leq n}$ is a real symmetric matrix satisfying  
\begin{equation} 
a_{j,k} \in C^{(2m-1)} \big(\R^n\big) \cap L^{\infty}(\bbR^n), \quad 1 \leq j, k \leq n, 
\end{equation} 
and with the property that there exists $\varepsilon_a > 0$ such that
	\begin{equation}\lb{f3}
		\sum_{j,k=1}^{n}a_{j,k}(x) y_j y_k \geq \varepsilon_a |y|^2 
\, \text{ for all } \, x\in\R^n, \; y=(y_1,\dots,y_n) \in \bbR^n.  
	\end{equation}
$(ii)$ Let $\emptyset \neq \Omega \subset \bbR^n$ be open and bounded. 	
In addition, assume that the $n \times n$ matrix-valued function $a$ equals the identity $I_n$ 
outside a ball $B_n(0;R_0)$ containing $\ol \Omega$, that is, there exists $R_0>0$ such that 
\begin{equation}\lb{ajk}
	a(x)=I_n \text{ whenever }  |x| \geq R_0, \text{ and } \,  \ol \Omega\subset B_n(0;R_0).
\end{equation} 
\end{hypothesis}

For simplicity we introduced the ball $B_n(0;R_0)$ containing $\ol \Omega$ in 
Hypothesis \ref{h3.1}\,$(ii)$, but 
for any fixed $\varepsilon > 0$, one can of course replace $B_n(0;R_0)$ by an open $\varepsilon$-neighborhood 
$\Omega_{\varepsilon}$ of $\ol \Omega$.

We will consider various closed (and self-adjoint) $L^2$-realizations of the differential expression   
\begin{align}\lb{taum}
\begin{split} 
& \tau_{2m} (a,b,q) :=\bigg(\sum_{j,k=1}^{n} (-i \partial_j - b_j(x))a_{j,k}(x)
(-i \partial_k - b_k(x))+q(x)\bigg)^m,     \\
& \hspace*{8cm} m \in \bbN, \; x \in \bbR^n.
\end{split} 
\end{align}

We note that Hypothesis \ref{h3.1}\,$(i)$ was of course chosen with $\tau_{2m} (a,b,q)$ in mind. In some instances we only consider the special case $m=1$, that is, $\tau_2 (a,b,q)$, and then choosing the most general case $m=1$ in Hypothesis \ref{h3.1}\,$(i)$ will of course be sufficient. We will tacitly assume such a relaxation of hypotheses on the coefficients $a,b,q$ without necessarily dwelling on this explicitly in every such instance.
 
In the following we find it convenient using auxiliary operators corresponding to the leading and the lower-order terms of the differential expression \eqref{taum}. To this end we first introduce the differential expression $\tau_{2m} (a) = \tau_{2m} (a,0,0)$, 
\begin{equation}
\tau_{2m} (a):= \bigg(-\sum_{j,k=1}^{n} \partial_j a_{j,k}(x) 
\partial_k \bigg)^m,  \quad m \in \bbN, \; x \in \bbR^n,    \lb{Tdifexp}
\end{equation}
and the associated linear operator $\wti T_{2m} (a)$ in $L^2(\R^n)$ given by 
\begin{equation}
\wti T_{2m} (a) u:= \tau_{2m} (a) u, \quad u \in\dom \big(\wti T_{2m} (a)\big): = W^{2m,2}(\R^n).   \lb{Tmin}
\end{equation}
Second, we observe that due to boundedness of the coefficients $a,b,q$ (cf.\ \eqref{f4}) and sufficiently many of their derivatives, one has
\begin{align} \lb{pa}
\begin{split} 
& \tau_{2m} (a,b,q) u = \tau_{2m} (a) u + \sum_{0 \leq |\alpha| \leq 2m-1}g_{\alpha}(a,b,q,x)\partial^{\alpha} u, \\& \tau_{2m} (a,b,q) u\in L^2{(\R^n)}, \quad u \in W^{2m,2}(\R^n), 
\end{split} 
\end{align}
for some $g_{\alpha}(a,b,q, \, \cdot \,)\in L^{\infty}(\R^n)$, $0\leq |\alpha| \leq 2m-1$. The sum of the lower-order terms in \eqref{pa} gives rise to a linear operator $\wti S_{2m-1} (a,b,q)$ in $L^2(\R^n)$, 
\begin{align}
\begin{split} 
& \wti S_{2m-1} (a,b,q) u:=\sum_{0 \leq |\alpha| \leq 2m-1}g_{\alpha}(a,b,q,x)\partial^{\alpha} u, \\ 
& u \in \dom\big(\wti S_{2m-1} (a,b,q)\big):=W^{2m,2}(\R^n).     \lb{3.29n}  
\end{split}
\end{align}

Next, we introduce the operator $\wti A_{2m} (a,b,q)$ in $L^2(\R^n)$ by 
\begin{equation}\lb{k2}
\wti A_{2m} (a,b,q) u:= \tau_{2m} (a,b,q) u, \quad  u \in \dom \big(\wti A_{2m} (a,b,q)\big):=W^{2m,2}(\R^n), 
\end{equation}
and its restriction $\wti A_{0,2m} (a,b,q)$ to $C_0^{\infty}(\R^n)$ in $L^2(\R^n)$ via 
\begin{equation}\lb{g2}
\wti A_{0,2m} (a,b,q) u:=\tau_{2m} (a,b,q) u, \quad 
u \in\dom \big(\wti A_{0,2m} (a,b,q)\big) := C_0^{\infty}(\R^n). 
\end{equation}

Making use of standard perturbation results, it is convenient to view the operator 
$\wti A_{2m} (a,b,q)$ as perturbation of $\wti T_{2m} (a)$ by $\wti S_{2m-1} (a,b,q)$ and state the following auxiliary fact.

\begin{theorem} \lb{t3.2} 
Assume Hypothesis \ref{h3.1}\,$(i)$. Then $\wti A_{0,2m} (a,b,q)$ is essentially self-adjoint in 
$L^2(\bbR^n)$, its closure equals $\wti A_{2m} (a,b,q)$, and hence, 
\begin{equation} 
\wti A_{2m} (a,b,q) \geq 0. 
\end{equation} 
In addition, the graph norm of $\wti A_{2m} (a,b,q)$ is equivalent to the norm of the Sobolev space $W^{2m,2}(\R^n)$, that is, there exist finite constants  $0<c<C$, depending only on $a,b,q,m,n$, such that 
\begin{align} \lb{k3} 
\begin{split}
c\|u\|^2_{W^{2m,2}(\R^n)}\leq \big\|\wti A_{2m} (a,b,q) u \big\|^2_{L^2(\R^n)}+\|u\|^2_{L^2(\R^n)} 
\leq C\|u\|^2_{W^{2m,2}(\R^n)},&    \\ 
u\in W^{2m,2}(\R^n).&  
\end{split} 
\end{align}
\end{theorem}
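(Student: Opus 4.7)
The plan is to decompose $\wti A_{2m}(a,b,q) = \wti T_{2m}(a) + \wti S_{2m-1}(a,b,q)$ via \eqref{pa}, establish all the claimed properties first for the leading positive uniformly elliptic operator $\wti T_{2m}(a)$, and then transfer them to $\wti A_{2m}(a,b,q)$ by a Kato--Rellich perturbation argument; nonnegativity will be derived separately by integration by parts on the core $C_0^\infty(\bbR^n)$.

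For the leading part, I would first treat the second-order operator $\wti T_2(a) = -\sum_{j,k} \partial_j a_{j,k} \partial_k$. Under Hypothesis \ref{h3.1}$(i)$ (uniform ellipticity, bounded coefficients with bounded derivatives), a standard G{\aa}rding inequality together with elliptic regularity on $\bbR^n$ shows that $\wti T_2(a)$ defined on $W^{2,2}(\bbR^n)$ is nonnegative self-adjoint, essentially self-adjoint on $C_0^\infty(\bbR^n)$, and has graph norm equivalent to the $W^{2,2}(\bbR^n)$-norm. I would then identify $\wti T_{2m}(a)$ from \eqref{Tmin} with the self-adjoint $m$-th power $\wti T_2(a)^m$ and show $\dom\bigl(\wti T_2(a)^m\bigr) = W^{2m,2}(\bbR^n)$: the forward inclusion is immediate from iterating the bounded mapping $\wti T_2(a)\colon W^{2k,2}(\bbR^n) \to W^{2(k-1),2}(\bbR^n)$; the reverse follows by iterated elliptic regularity, which gains two derivatives at each step and terminates at step $m$ by the assumed $C^{(2m-1)}$-smoothness of the coefficients $a_{j,k}$. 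Essential self-adjointness of $\wti T_{2m}(a)$ on $C_0^\infty(\bbR^n)$ and equivalence of its graph norm with $\|\, \cdot \,\|_{W^{2m,2}(\bbR^n)}$ then follow.

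With the leading part in hand, the lower-order operator $\wti S_{2m-1}(a,b,q)$ of \eqref{3.29n} has $L^\infty(\bbR^n)$ coefficients and differential order at most $2m-1$, hence maps $W^{2m,2}(\bbR^n)$ continuously into $L^2(\bbR^n)$ with bound controlled by $\|\, \cdot \,\|_{W^{2m-1,2}(\bbR^n)}$. Combining this with Ehrling's interpolation inequality $\|u\|_{W^{2m-1,2}(\bbR^n)} \leq \varepsilon \|u\|_{W^{2m,2}(\bbR^n)} + C_\varepsilon \|u\|_{L^2(\bbR^n)}$ and the previous graph-norm equivalence shows that $\wti S_{2m-1}(a,b,q)$ is $\wti T_{2m}(a)$-bounded with relative bound zero. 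The Kato--Rellich theorem then yields self-adjointness of $\wti A_{2m}(a,b,q)$ on $W^{2m,2}(\bbR^n)$, retains $C_0^\infty(\bbR^n)$ as a core (so $\wti A_{0,2m}(a,b,q)$ is essentially self-adjoint with closure $\wti A_{2m}(a,b,q)$), and delivers the graph-norm equivalence \eqref{k3}.

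For nonnegativity I would work on the core $C_0^\infty(\bbR^n)$, exploiting that $\tau_{2m}(a,b,q) = (\tau_2(a,b,q))^m$ and that the local, formally symmetric operator $\tau_2(a,b,q)$ preserves $C_0^\infty(\bbR^n)$. For $u \in C_0^\infty(\bbR^n)$, repeated integration by parts reduces $(u, \tau_{2m}(a,b,q) u)_{L^2(\bbR^n)}$ to $\|(\tau_2(a,b,q))^{m/2} u\|_{L^2(\bbR^n)}^2 \geq 0$ when $m$ is even, and to $(v, \tau_2(a,b,q) v)_{L^2(\bbR^n)}$ with $v := (\tau_2(a,b,q))^{(m-1)/2} u \in C_0^\infty(\bbR^n)$ when $m$ is odd; in the latter case, one final integration by parts together with symmetry and positivity of $a$, reality of the $b_j$, and $q \geq 0$ bounds the result below by $\varepsilon_a \|(-i\nabla - b)v\|_{L^2(\bbR^n)}^2 \geq 0$ via \eqref{f3} and \eqref{f4a}. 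The main obstacle is the elliptic-regularity identification $\dom(\wti T_2(a)^m) = W^{2m,2}(\bbR^n)$ together with the matching graph-norm equivalence; once this is secured, the remainder reduces to a standard Kato--Rellich argument and a direct integration-by-parts computation.
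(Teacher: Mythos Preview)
Your proposal is correct and follows a genuinely different route from the paper's proof. Both arguments share the overall architecture---split $\wti A_{2m}(a,b,q)=\wti T_{2m}(a)+\wti S_{2m-1}(a,b,q)$, handle the leading part first, then absorb the lower-order piece via a Kato--Rellich argument with relative bound zero---and the treatment of $\wti S_{2m-1}(a,b,q)$ is essentially the same in both. The difference lies in how the domain identification $\dom(\wti T_{2m}(a))=W^{2m,2}(\bbR^n)$ and the accompanying graph-norm equivalence are established. The paper works directly with the $2m$-th order operator and exploits the structural assumption $a=I_n$ outside a ball (from Hypothesis~\ref{h3.1}\,$(ii)$, invoked via \eqref{ajk}): using a cutoff $\varphi_{R_0}$ it compares $(\wti T_{0,2m}(a))^*(1-\varphi_{R_0})$ with $H_0^m(1-\varphi_{R_0})$, reducing the variable-coefficient problem to the polyharmonic one plus a lower-order remainder. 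Your approach instead uses the power structure $\tau_{2m}(a)=(\tau_2(a))^m$: you first settle the second-order case and then bootstrap $\dom(\wti T_2(a)^m)=W^{2m,2}(\bbR^n)$ by iterated global elliptic regularity on $\bbR^n$, after which essential self-adjointness on $C_0^\infty(\bbR^n)$ follows from density of $C_0^\infty(\bbR^n)$ in $W^{2m,2}(\bbR^n)$. Your route is more intrinsic (it does not invoke the compact-perturbation-of-$I_n$ structure) but leans on global higher-order regularity for $\wti T_2(a)$, which in turn requires the derivatives of $a_{j,k}$ up to order $2m-1$ to be bounded; under Hypothesis~\ref{h3.1}\,$(ii)$ this is automatic, while under (i) alone you should make that reading of $a_{j,k}\in C^{(2m-1)}(\bbR^n)\cap L^\infty(\bbR^n)$ explicit. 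Your integration-by-parts argument for $\wti A_{2m}(a,b,q)\geq 0$ on the core is also more explicit than the paper, which leaves nonnegativity implicit.
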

\begin{proof} We introduce the minimal operator $\wti T_{0,2m} (a)$ in $L^2(\R^n)$ by 
\begin{equation}
\quad\wti T_{0,2m} (a) u:=\tau_{2m} (a) u, \quad u\in\dom(\wti T_{0,2m} (a) ):=C_0^{\infty}(\R^n), 
\end{equation}
and will show that it is essentially self-adjoint and that 
$\wti T_{2m} (a)= \big(\wti T_{0,2m} (a)\big)^*$; the operator $\wti A_{2m} (a,b,q)$ will then be considered as an infinitesimally bounded perturbation of $\wti T_{2m} (a)$. 

Let $u\in L^2(\R^n)\cap W^{2m,2}_{loc}(\R^n)$ and $\tau_{2m} (a) u\in L^2(\R^n)$, then for 
arbitrary $v \in\dom \big(\wti T_{0,2m} (a)\big)= C_0^{\infty}(\R^n)$ one has
\begin{align}
\begin{split} 
& \big(u,\wti T_{0,2m} (a) v\big)_{L^2(\R^n)}=(u,\tau_{2m} (a) v)_{L^2(\R^n)} \\
&\quad={}_{\cD'(\R^n)} \langle \tau_{2m} (a) u, v \rangle_{\cD(\R^n)} 
= (\tau_{2m} (a)u, v)_{L^2(\R^n)}, 
\end{split} 
\end{align}
hence $u\in \dom\big(\big(\wti T_{0,2m} (a)\big)^*\big)$ and 
$\big(\wti T_{0,2m} (a)\big)^*u=\tau_{2m} (a)u$, implying 
\begin{equation}\lb{g3}
\big\{u \in L^2(\R^n) \, \big| \, u\in W^{2m,2}_{loc}(\R^n), \, \tau_{2m} (a) u\in L^2(\R^n)\big\} 
\subseteq \dom\big(\big(\wti T_{0,2m} (a)\big)^*\big).
\end{equation}

Using the interior regularity for elliptic differential operators, one obtains the converse inclusion: 
Indeed, if $u\in \dom\big(\big(\wti T_{0,2m} (a)\big)^*\big)$, then $u\in L^2(\R^n)\subset \cD'(\R^n)$ 
and for some $v \in L^2(\R^n)$ one has $\tau_{2m} (a) u=v$, implying $u\in W^{2m,2}_{loc}(\R^n)$ 
(see, e.g., \cite[Theorem~1.3]{RT05}, see also \cite{Ta11}). 

Our next objective is to show that $\dom\big(\big(\wti T_{0,2m} (a)\big)^*\big)=W^{2m,2}(\R^n)$. Let $\varphi_{R_0}\in C_0^{\infty}(\R^n)$ and $\varphi_{R_0}(x)=1$, $x\in B_n(0;R_0)$, cf.\ \eqref{ajk}. Since $u\varphi_{R_0} \in W^{2m,2}(\R^n)$ for any $u\in \dom\big(\big(\wti T_{0,2m} (a)\big)^*\big)$, in order to prove that $\dom\big(\big(\wti T_{0,2m} (a)\big)^*\big) \subseteq W^{2m,2}(\R^n)$ it suffices to obtain the inclusion  $u(1-\varphi_{R_0})\in W^{2m,2}(\R^n)$. This, in turn, will be guaranteed once we prove the following fact, 
\begin{equation}\lb{k1}
	\dom \big(\big(\wti T_{0,2m} (a)\big)^*(1-\varphi_{R_0})\big) 
	= \dom \big(H_0^m(1-\varphi_{R_0})\big).
\end{equation}
Here the self-adjoint operator $H_0$ in $L^2(\R^n)$ is defined by
\begin{equation}
H_0 u = (- \Delta) u, \quad u \in \dom (H_0) = W^{2,2}(\bbR^n), 
\end{equation}
and hence
\begin{equation}
H_0^{\alpha} u = (- \Delta)^{\alpha} u, \quad 
u \in \dom \big(H_0^{\alpha}\big) = W^{2\alpha,2}(\bbR^n), \quad \alpha \in (0,\infty).   \lb{3.18a} 
\end{equation}

For $u\in\dom (H_0^m(1-\varphi_{R_0}))$, the expression 
$\big(\wti T_{0,2m} (a)\big)^*(1-\varphi_{R_0})u - H_0^m(1-\varphi_{R_0})u$ does not contain derivatives of $u$ of order higher than $2m-1$, therefore, for any $\varepsilon > 0$ there exists some finite $k(\varepsilon)>0$ such that
\begin{align}
\begin{split} 
& \big\|\big(\wti T_{0,2m} (a)\big)^*(1-\varphi_{R_0})u - 
H_0^m(1-\varphi_{R_0})u\big\|_{L^2(\R^n)}^2      \\ 
&\quad \leq \varepsilon \big\|H_0^m(1-\varphi_{R_0})u\big\|_{L^2(\R^n)}^2 
+ k(\varepsilon) \|u\|_{L^2(\R^n)}^2, 
\quad u\in \dom \big(H_0^m(1-\varphi_{R_0})\big).     \lb{g1}
\end{split} 
\end{align}
Combining \eqref{g1} and \cite[Theorem~IV 1.1]{Ka80} one obtains equality of the domains in 
\eqref{k1}, and hence also 
$\dom((\wti T_{0,2m} (a) )^*)\subseteq W^{2m,2}(\R^n)$. The opposite inclusion is clear from \eqref{g3}. 

Next we will show that 
\begin{equation} 
(\wti T_{0,2m} (a) )^*u = \tau_{2m}(a)u, \quad 
u\in\dom\big((\wti T_{0,2m} (a) )^*\big)=W^{2m,2}(\R^n).     \lb{3.22a} 
\end{equation} 
To this end, fix $v \in\dom \big(\wti T_{0,2m} (a)\big) = C_0^{\infty}(\bbR^n)$ and an arbitrary 
$u \in W^{2m,2}(\R^n)$. Then using the membership $\tau_{2m} (a)u\in L^2(\R^n)$, one obtains 
\begin{align} 
\begin{split} 
& \big(u,\wti T_{0,2m} (a)v\big)_{L^2(\R^n)}=(u,\tau_{2m} (a) v)_{L^2(\R^n)}    \\
&\quad={}_{\cD'(\R^n)} \langle \tau_{2m} (a) u, v \rangle_{\cD(\R^n)} 
= (\tau_{2m} (a)u, v)_{L^2(\R^n)},     
\end{split} 
\end{align}
and hence $\big(\wti T_{0,2m} (a)\big)^*u=\tau_{2m} (a)u$. The arbitrariness of $u$ implies that 
$(\wti T_{0,2m} (a) )^*$ is symmetric. Therefore $\wti T_{0,2m} (a)$ is essentially self-adjoint and thus 
$\wti T_{2m} (a) = \big(\wti T_{0,2m} (a)\big)^*$ is self-adjoint. 

The proof thus far showed an important fact: The graph norms of the operators $\wti T_{2m} (a)$ and $H_0^m$, both  defined on $W^{2m,2}(\R^n)$, are equivalent, that is, there exist finite constants $0<c_1<C_1$, depending only on the coefficients $a,b,q,m,n$, such that 
\begin{align}
\begin{split} 
& c_1\big[\big\|H_0^m u\big\|^2_{L^2(\R^n)}+\|u\|^2_{L^2(\R^n)}\big] \leq \big\|\wti T_{2m} (a) u\big\|^2_{L^2(\R^n)}+\|u\|^2_{L^2(\R^n)}    \\
&\quad \leq C_1\big[\big\|H_0^mu\big\|^2_{L^2(\R^n)}+\|u\|^2_{L^2(\R^n)}\big],  \quad 
u \in W^{2m,2}(\R^n).     \lb{g8}
\end{split}
\end{align}
In particular, the graph norm of $\wti T_{2m} (a)$ is equivalent to the norm of $W^{2m,2}(\R^n)$.
	
Finally we show that $\big(\wti A_{0,m} (a,b,q)\big)^*$ is symmetric, actually, self-adjoint, proving that 
$\wti A_{0,m} (a,b,q)$ is essentially self-adjoint. To this end, we recall the operator $\wti S_{2m-1} (a,b,q)$ in \eqref{3.29n}, corresponding to lower-order terms in the differential expression $\tau_{2m} (a,b,q)$. Since $\wti S_{2m-1} (a,b,q)$ has bounded coefficients and its order is at most $2m-1$, it is infinitesimally bounded with respect to the polyharmonic operator $H_0^m$. Thus, for any  
$\varepsilon > 0$ there exists some finite $k(\varepsilon)>0$ such that 
\begin{equation}\lb{g7}
\big\|\wti S_{2m-1} (a,b,q) u \big\|^2_{L^2(\R^n)} \leq \varepsilon \|H_0^mu\|_{L^2(\R^n)}^2 
+ k(\varepsilon)\|u\|_{L^2(\R^n)}^2, \quad u\in W^{2m,2}(\R^n).
\end{equation}
Combining this inequality with the equivalence of the graph norms of $\wti T_{2m} (a)$ and $H_0^m$, one concludes that $\wti S_{2m-1} (a,b,q)$ is infinitesimally bounded with respect to $\wti T_{2m} (a)$. Hence, $\wti A_{0,2m} (a,b,q) =\wti T_{0,2m} (a) + \wti S_{2m-1} (a,b,q)$ is essentially self-adjoint, and 
$\dom ((A_{0,m} (a,b,q))^*)=\dom \big(\wti T_{2m} (a)\big)=W^{2m,2}(\R^n)$. The fact \eqref{k3} follows from \cite[Proposition 7.2]{EE87} and the fact that $\wti A_{2m} (a,b,q)$ and $H_0^m$ have the common domain $W^{2m,2}(\R^n)$ and both are closed (in fact, self-adjoint).
\end{proof}

\begin{lemma}\lb{l3.3}
Assume Hypothesis \ref{h3.1}\,$(i)$. Then for all $\alpha \in (0,1]$, 
\begin{equation}
\dom\big(\big(\wti A_{2m}(a,b,q))^{\alpha}\big) = W^{2m\alpha,2}(\bbR^n),     \lb{3.24a} 
\end{equation}
and there exist finite constants $0<c<C$ depending only on $a,b,q,m,n$, 
such that 
\begin{align}\lb{m6a} 
\begin{split}
c\|u\|^2_{W^{m,2}(\R^n)} \leq \big\|\wti A_{2m} (a,b,q)^{1/2} u\big\|^2_{L^2(\R^n)}+\|u\|^2_{L^2(\R^n)} 
\leq C\|u\|^2_{W^{m,2}(\R^n)},&   \\
u\in W^{m,2}(\R^n),&
\end{split} 
\end{align} 
and hence, 
\begin{align}\lb{m6} 
\begin{split}
c\|u\|^2_{W^{m,2}(\R^n)}\leq \big(u, \wti A_{2m} (a,b,q) u\big)_{L^2(\R^n)}+\|u\|^2_{L^2(\R^n)} 
\leq C\|u\|^2_{W^{m,2}(\R^n)},&   \\
u\in W^{2m,2}(\R^n).&
\end{split} 
\end{align} 
\end{lemma}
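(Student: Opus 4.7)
The strategy is to establish \eqref{3.24a} by comparing the fractional powers of $\wti A_{2m}(a,b,q)+I_{\R^n}$ with those of $H_0^m+I_{\R^n}$ via the L\"owner--Heinz inequality, and then to deduce \eqref{m6a} and \eqref{m6} by specializing to $\alpha=1/2$.

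First I would set $A_1:=\wti A_{2m}(a,b,q)+I_{\R^n}$ and $A_2:=H_0^m+I_{\R^n}$. Both are self-adjoint on $L^2(\R^n)$, both satisfy $A_j\ge I_{\R^n}$, and by Theorem \ref{t3.2} (together with the Fourier identification $\dom(H_0^m)=W^{2m,2}(\R^n)$) they share the common operator domain $W^{2m,2}(\R^n)$ with mutually equivalent graph norms. Since $A_j\ge I_{\R^n}$ forces $\|A_ju\|_{L^2(\R^n)}\ge\|u\|_{L^2(\R^n)}$, this equivalence upgrades, via \eqref{k3}, to finite constants $0<c_0<C_0$ depending only on $a,b,q,m,n$ with $c_0\|A_2u\|_{L^2(\R^n)}\le\|A_1u\|_{L^2(\R^n)}\le C_0\|A_2u\|_{L^2(\R^n)}$ for all $u\in W^{2m,2}(\R^n)$. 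Squaring gives the form comparison $c_0^2A_2^2\le A_1^2\le C_0^2A_2^2$ on the common form domain $W^{2m,2}(\R^n)$.

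Next, by the L\"owner--Heinz inequality (operator monotonicity of $t\mapsto t^{\beta}$ on $[0,\infty)$ for $\beta\in[0,1]$), the preceding form comparison upgrades, for each $\alpha\in[0,1]$, to $c_0^{2\alpha}A_2^{2\alpha}\le A_1^{2\alpha}\le C_0^{2\alpha}A_2^{2\alpha}$, which in turn yields $\dom(A_1^{\alpha})=\dom(A_2^{\alpha})$ together with
\begin{equation*}
c_0^{\alpha}\|A_2^{\alpha}u\|_{L^2(\R^n)}\le\|A_1^{\alpha}u\|_{L^2(\R^n)}\le C_0^{\alpha}\|A_2^{\alpha}u\|_{L^2(\R^n)},\quad u\in\dom(A_2^{\alpha}).
\end{equation*}
The Fourier transform diagonalizes $A_2$ as multiplication by $|\xi|^{2m}+1$, so $A_2^{\alpha}$ becomes multiplication by $(|\xi|^{2m}+1)^{\alpha}$; together with the elementary equivalence $(|\xi|^{2m}+1)^{\alpha}\sim(1+|\xi|^2)^{m\alpha}$ uniformly in $\xi\in\R^n$, this identifies $\dom(A_2^{\alpha})$ with the Bessel potential space $W^{2m\alpha,2}(\R^n)$ with equivalent norms. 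Combined with the standard spectral-calculus fact that $\dom((T+I_{\cH})^{\alpha})=\dom(T^{\alpha})$ (with equivalent shifted graph norms) for any nonnegative self-adjoint $T$ in $\cH$ and $\alpha\in(0,1]$, one obtains \eqref{3.24a}.

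Finally, specializing to $\alpha=1/2$ yields $\|A_1^{1/2}u\|_{L^2(\R^n)}^2\sim\|u\|_{W^{m,2}(\R^n)}^2$ on $W^{m,2}(\R^n)$; together with the form identity $\|A_1^{1/2}u\|_{L^2(\R^n)}^2=\|\wti A_{2m}(a,b,q)^{1/2}u\|_{L^2(\R^n)}^2+\|u\|_{L^2(\R^n)}^2$ valid on the form domain $\dom(A_1^{1/2})=W^{m,2}(\R^n)$, this produces \eqref{m6a}. Since $(u,\wti A_{2m}(a,b,q)u)_{L^2(\R^n)}=\|\wti A_{2m}(a,b,q)^{1/2}u\|_{L^2(\R^n)}^2$ for $u\in\dom(\wti A_{2m}(a,b,q))=W^{2m,2}(\R^n)\subseteq W^{m,2}(\R^n)$, restricting \eqref{m6a} to $u\in W^{2m,2}(\R^n)$ then yields \eqref{m6}. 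The principal technical point is the L\"owner--Heinz upgrade from the form inequality for $A_j^2$ to that for $A_j^{2\alpha}$; this is a standard consequence of operator monotonicity of $t\mapsto t^{\beta}$ on $[0,\infty)$ for $\beta\in[0,1]$, but it must be formulated at the form level rather than through the naive norm comparison in order to preserve the correct identification of the fractional-power domains.
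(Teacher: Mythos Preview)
Your proof is correct and follows essentially the same approach as the paper: both arguments reduce \eqref{3.24a} to the L\"owner--Heinz inequality applied to two nonnegative self-adjoint operators with the common domain $W^{2m,2}(\bbR^n)$, and then specialize $\alpha=1/2$ for \eqref{m6a}--\eqref{m6}. The only cosmetic difference is packaging: the paper works directly with $\wti A_{2m}(a,b,q)$ and $H_0^m$, invokes relative boundedness to write $\|Sf\|^2\le\|(a^2|T|^2+b^2I)^{1/2}f\|^2$, and applies L\"owner--Heinz to that, whereas you shift by the identity to $A_j\ge I$ so the constant term is absorbed and you obtain the clean two-sided inequality $c_0^2A_2^2\le A_1^2\le C_0^2A_2^2$ before applying L\"owner--Heinz; both routes are equivalent and equally short.
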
 
\begin{proof}
We start with a well-known interpolation argument: Let $S$ and $T$ be closed operators in $\cH$  satisfying $\dom(S)\supseteq \dom(T)$. Then $S$ is relatively bounded with respect to $T$ 
(cf., e.g., \cite[Proposition~III.7.2]{EE87},  \cite[Remark~IV.1.5]{Ka80}) and hence
there exist finite constants $a>0$ and $b>0$ such that 
\begin{align}
\||S|f\|_{\cH}^2 = \|Sf\|_{\cH}^2 & \leq a^2 \|Tf\|_{\cH}^2 + b^2 \|f\|_{\cH}^2
= a^2 \||T|f\|_{\cH}^2 + b^2 \|f\|_{\cH}^2  \no \\
& = \big\|\big[a^2 |T|^2 + b^2 I_{\cH}\big]^{1/2} f\big\|_{\cH}^2,
\quad f \in \dom(T)=\dom(|T|).   \lb{3.27a}
\end{align}
Thus, applying the Loewner--Heinz inequality (cf., e.g., 
\cite{Ka52}, \cite[Theorem\ IV.1.11]{KPS82}), one infers that (see also \cite{GLST15}) 
\begin{equation}
\dom\big(|S|^\alpha\big) \supseteq \dom\big(\big(a^2|T|^2 + b^2 I_{\cH}\big)^{\alpha/2}\big)
= \dom\big(|T|^{\alpha}\big), \quad \alpha \in (0,1].   \lb{3.28a}
\end{equation} 
In particular, if $\dom(S) = \dom(T)$ one concludes that 
\begin{equation}
\dom\big(|S|^\alpha\big) = \dom\big(|T|^{\alpha}\big), \quad \alpha \in (0,1].    \lb{3.29a}
\end{equation}
Identifying $S$ with $\wti A_{2m}(a,b,q)$ and $T$ with $H_0^m$, \eqref{3.18a} and \eqref{3.29a} 
prove \eqref{3.24a}.

Employing \eqref{3.24a} with $\alpha = 1/2$ one infers that 
\begin{align}\lb{m7} 
\begin{split} 
\big\|\wti A_{2m} (a,b,q) ^{1/2}u\big\|^2_{L^2(\R^n)}+\|u\|^2_{L^2(\R^n)}  
\approx \big\|H_0^{m/2}u\big\|^2_{L^2(\R^n)}+\|u\|^2_{L^2(\R^n)},&   \\
u\in  W^{m,2}(\R^n).&  
\end{split} 
\end{align}
Assuming, in addition, that $u\in  W^{2m,2}(\R^n)$, the equivalence in \eqref{m7} may be 
rewritten as 
\begin{equation}\lb{m8}
\big(u, \wti A_{2m} (a,b,q) u\big)_{L^2(\R^n)}+\|u\|^2_{L^2(\R^n)} \approx	
\big(u,H_0^mu\big)_{L^2(\R^n)}+\|u\|^2_{L^2(\R^n)},
\end{equation}
and together with the fact that the right-hand side of \eqref{m8} is equivalent to the norm 
$\|\, \cdot \,\|^2_{W^{m,2}(\R^n)}$, one arrives at \eqref{m6}.
\end{proof}

Given Lemma \ref{l3.3}, the sequilinear form $Q_{\wti A_{2m} (a,b,q)}$ in $L^2(\bbR^n)$ associated with $\wti A_{2m}(a,b,q)$ is given by  
\begin{align}\lb{m1}
\begin{split} 
& Q_{\wti A_{2m} (a,b,q)} (u,v):= \big(\wti A_{2m} (a,b,q)^{1/2}u,
\wti A_{2m} (a,b,q)^{1/2}v\big)_{L^2(\R^n)},     \\
& u, v \in \dom (Q_{\wti A_{2m} (a,b,q)}) = \dom \big(\wti A_{2m} (a,b,q)^{1/2}\big) 
= W^{m,2} (\bbR^n), 
\end{split} 
\end{align}
and we also introduce 
\begin{equation}
Q_{H_0^m}(u,v):= \big(H_0^{m/2}u,H_0^{m/2}v\big)_{L^2(\R^n)}, 
\quad u,v \in \dom(Q_{H_0^m}) = W^{m,2}(\R^n).    \lb{m4} 
\end{equation}

In addition, we will employ the explicit representation of the form $Q_{\wti A_{2m} (a,b,q)}$ in terms of $\wti A_{2m} (a,b,q)$, 
\begin{align}\lb{m14} 
& Q_{\wti A_{2m} (a,b,q)} (u,v)= 
\begin{cases} 
(\tau_{2 \ell} u, \tau_{2 \ell} v)_{L^2(\R^n)}, \quad m=2 \ell, \, \ell \in \bbN, \\
\sum_{j,k=1}^n ((- i \partial_j - b_j) \tau_{2 \ell} u, a_{j,k} (- i \partial_k - b_k) \tau_{2 \ell} v)_{L^2(\R^n)} \\ 
+ (\tau_{2 \ell} u,q \, \tau_{2 \ell} v)_{L^2(\R^n)}, \quad m=2 \ell+1, \, \ell \in \bbN \cup \{0\},
\end{cases}     \no \\ 
& \hspace*{8.14cm} u,v\in W^{m,2}(\R^n). 
\end{align} 
Here, in obvious notation, $\tau_0 = 1$.

Assuming Hypothesis \ref{h3.1}\,$(i)$, we introduce one of the main objects of our study, the symmetric operator 
$A_{\Omega,2m} (a,b,q)$ in $L^2(\Omega)$ by 
\begin{equation} \lb{l5.2}
A_{\Omega,2m} (a,b,q) f = \tau_{2m} (a,b,q) f,\quad f\in\dom (A_{\Omega,2m} (a,b,q))=W_0^{2m,2} (\Omega),  
\end{equation}
and note that $\wti A_{2m} (a,b,q)$ formally represents its extended version in $L^2(\R^n)$. In 
addition, we introduce the associated minimal operator $A_{min,\Omega,2m} (a,b,q)$ in 
$L^2(\Omega)$ by 
\begin{align}
\begin{split} 
& A_{min,\Omega,2m} (a,b,q) f:= \tau_{2m} (a,b,q) f,   \\  
& f\in\dom (A_{min,\Omega,2m} (a,b,q)) := C_0^{\infty}(\Omega).    \lb{3.29} 
\end{split} 
\end{align}
Clearly, $A_{min,\Omega,2m} (a,b,q)$ is symmetric (hence, closable) in $L^2(\Omega)$ (upon elementary integration by parts) and nonnegative, 
\begin{equation}
A_{min,\Omega,2m} (a,b,q) \geq 0.   \lb{3.30} 
\end{equation} 

\begin{theorem} \lb{t3.5} 
Assume Hypothesis \ref{h3.1}\,$(i)$. Then the closure of 
$A_{min,\Omega,2m} (a,b,q)$ in $L^2(\Omega)$ is given by $A_{\Omega,2m} (a,b,q)$, 
\begin{equation}
\ol{A_{min,\Omega,2m} (a,b,q)} = A_{\Omega,2m} (a,b,q).    \lb{l5.2a}
\end{equation}
In particular, $A_{\Omega,2m} (a,b,q)$ is symmetric and nonnegative in $L^2(\Omega)$,
\begin{equation}
A_{\Omega,2m} (a,b,q) \geq 0.    \lb{3.32a} 
\end{equation} 
In addition, there exist finite constants $0<c<C$, depending only on $a,b,q,m,n$, such that
\begin{align}
\begin{split} 
c \|f\|_{W_0^{2m,2}(\Omega)}^2 \leq \|A_{\Omega,2m} (a,b,q)f\|^2_{ L^2(\Omega)} 
+ \|f\|^2_{L^2(\Omega)} \leq C \|f\|_{W_0^{2m,2}(\Omega)}^2,& \\ 
f \in W_0^{2m,2} (\Omega).&       \lb{l5.15} 
\end{split} 
\end{align}  
\end{theorem}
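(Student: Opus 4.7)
The plan is to reduce the entire statement to the $\bbR^n$-version already proved in Theorem \ref{t3.2} by exploiting the zero-extension from $\Omega$ to $\bbR^n$. The starting observation is that $C_0^\infty(\Omega)$ is dense in $W_0^{2m,2}(\Omega)$ by definition, so once the graph-norm of $A_{\Omega,2m}(a,b,q)$ is shown to be equivalent to the $W_0^{2m,2}(\Omega)$-norm on $\dom(A_{\Omega,2m}(a,b,q))$ (i.e., once \eqref{l5.15} is in hand), both closedness and the identification \eqref{l5.2a} follow: symmetry and \eqref{3.32a} then transfer from $C_0^\infty(\Omega)$ (where they are immediate by integration by parts and \eqref{3.30}) to all of $W_0^{2m,2}(\Omega)$ by density.

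The heart of the argument is therefore \eqref{l5.15}. Given $f\in W_0^{2m,2}(\Omega)$, choose $\{\varphi_k\}\subset C_0^\infty(\Omega)$ with $\varphi_k\to f$ in $W^{2m,2}(\Omega)$, and let $\tilde f$ denote the extension of $f$ by zero to $\bbR^n$. The functions $\tilde{\varphi}_k\in C_0^\infty(\bbR^n)$ form a Cauchy sequence in $W^{2m,2}(\bbR^n)$ converging to $\tilde f$, so $\tilde f\in W^{2m,2}(\bbR^n)$ with
\begin{equation}
\|\tilde f\|_{W^{2m,2}(\bbR^n)}=\|f\|_{W_0^{2m,2}(\Omega)},\qquad \|\tilde f\|_{L^2(\bbR^n)}=\|f\|_{L^2(\Omega)},
\end{equation}
and every weak derivative of $\tilde f$ up to order $2m$ is the zero-extension of the corresponding derivative of $f$. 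Consequently, $\tau_{2m}(a,b,q)\tilde f$ is the zero-extension of $\tau_{2m}(a,b,q) f$, which yields
\begin{equation}
\big\|\wti A_{2m}(a,b,q)\tilde f\big\|_{L^2(\bbR^n)}=\|A_{\Omega,2m}(a,b,q) f\|_{L^2(\Omega)}.
\end{equation}
Applying \eqref{k3} to $\tilde f$ and substituting these three identities delivers \eqref{l5.15} with constants inherited from Theorem \ref{t3.2}.

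From \eqref{l5.15} the remaining conclusions are formal. The upper estimate shows that $A_{\Omega,2m}(a,b,q)$ is bounded as a map $\big(W_0^{2m,2}(\Omega),\|\cdot\|_{W^{2m,2}}\big)\to L^2(\Omega)$. The lower estimate shows closedness in $L^2(\Omega)$: if $f_k\to f$ and $A_{\Omega,2m}(a,b,q) f_k\to g$ in $L^2(\Omega)$, then \eqref{l5.15} forces $\{f_k\}$ to be Cauchy in $W_0^{2m,2}(\Omega)$, so its $W_0^{2m,2}(\Omega)$-limit coincides with $f$, placing $f$ in $\dom(A_{\Omega,2m}(a,b,q))$ and giving $g=A_{\Omega,2m}(a,b,q)f$ by continuity. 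Density of $C_0^\infty(\Omega)$ in $W_0^{2m,2}(\Omega)$ then yields density with respect to the equivalent graph norm, proving $\ol{A_{min,\Omega,2m}(a,b,q)}=A_{\Omega,2m}(a,b,q)$, and symmetry together with nonnegativity transfer from $C_0^\infty(\Omega)$ to $W_0^{2m,2}(\Omega)$ along the same approximating sequences.

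The only delicate point in the argument is the identification of $\tau_{2m}(a,b,q)\tilde f$ with the zero-extension of $\tau_{2m}(a,b,q) f$ and the preservation of the $W^{2m,2}$-norm under the extension operator on $W_0^{2m,2}(\Omega)$; this, however, is a standard consequence of the definition of $W_0^{2m,2}(\Omega)$ as the $W^{2m,2}$-closure of $C_0^\infty(\Omega)$ and requires no regularity of $\partial\Omega$.
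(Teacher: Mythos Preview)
Your proof is correct and follows essentially the same approach as the paper: both arguments reduce to the $\bbR^n$-estimate \eqref{k3} via zero-extension of elements of $W_0^{2m,2}(\Omega)$, and deduce the closure identification from the equivalence of the graph norm with the $W_0^{2m,2}(\Omega)$-norm. The only cosmetic difference is the order of steps---the paper first establishes the norm equivalence on $C_0^\infty(\Omega)$, identifies the closure (using a distributional pairing to pin down the action of $\ol{A_{min,\Omega,2m}(a,b,q)}$), and then reads off \eqref{l5.15}, whereas you prove \eqref{l5.15} on all of $W_0^{2m,2}(\Omega)$ first and then derive closedness and \eqref{l5.2a} from it.
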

\begin{proof} Using \eqref{k3} with $v\in C_0^{\infty} (\R^n)$, $\supp(v) \subset \Omega$ one 
concludes that the graph norm of $A_{\Omega,2m} (a,b,q)$ is equivalent to the norm of 
$\mathring{W}^{2m,2}(\Omega)$ on $C_0^{\infty} (\Omega)$. Therefore, 
$\dom\big(\ol{A_{min,\Omega,2m} (a,b,q)}\big) = \mathring{W}^{2m,2}(\Omega)$. In order to prove that
$\ol{A_{min,\Omega,2m} (a,b,q)} f = \tau_{2m} (a,b,q) f$, we consider $\{f_j\}_{j \in\bbN} \subset C_0^{\infty}(\Omega)$, $f, g \in L^2(\Omega)$, 
such that 
\begin{equation} \lb{15.13}
\lim_{j \to \infty} \|f_j - f\|_{L^2(\Omega)} = 0 \, \text{ and } \,  
\lim_{j \to \infty} \big\|A_{min,\Omega,2m} (a,b,q) f_j - g\big\|_{L^2(\Omega)} = 0.
\end{equation} 
Since $A_{min,\Omega,2m} (a,b,q)$ is symmetric and hence closable in $L^2(\Omega)$, one infers that 
\begin{equation} 
f\in\dom\big(\ol{A_{min,\Omega,2m} (a,b,q)}\big) = W_0^{2m,2} (\Omega) \, \text{ and } \,  
\ol{A_{min,\Omega,2m} (a,b,q)} f = g. 
\end{equation} 
Taking arbitrary $\psi \in C_0^{\infty}(\Omega)$, and recalling our notation for the distributional pairing 
${}_{\cD'(\Omega)} \langle \, \cdot \,, \, \cdot \, \rangle_{\cD(\Omega)} $ (compatible with the scalar product $(\, \cdot \,, \, \cdot \,)_{L^2(\Omega)}$), one concludes that 
\begin{align}
& (g, \psi)_{L^2(\Omega)} = {}_{\cD'(\Omega)} \langle g, \psi \rangle_{\cD(\Omega)} 
= \lim_{j \to \infty} {}_{\cD'(\Omega)} \langle \tau_{2m} (a,b,q) f_j , \psi \rangle_{\cD(\Omega)} \no \\ 
& \quad = \lim_{j\to \infty} \int_{\Omega} \ol{f_j(x)} \, \big(\tau_{2m} (a,b,q) \psi\big)(x) \, d^n x = \int_{\Omega} \ol{f(x)} \, \big(\tau_{2m} (a,b,q) \psi\big)(x) \, d^n x    \no \\
& \quad =  {}_{\cD'(\Omega)} \langle \tau_{2m} (a,b,q) f , \psi \rangle_{\cD(\Omega)},    
\end{align}
implying $g = \tau_{2m} (a,b,q) f$ and hence,  
$\ol{A_{min,\Omega,2m} (a,b,q)} f = A_{\Omega,2m} (a,b,q) f$ implying \eqref{l5.2a}. This also completes the proof of 
\eqref{l5.15}. Finally, being the closure of the symmetric operator $A_{min,\Omega,2m} (a,b,q)$, also 
$A_{\Omega,2m} (a,b,q)$ is symmetric in $L^2(\Omega)$ (cf., e.g., \cite[Theorem~5.4\,(b)]{We80}). 
\end{proof}

Next, still assuming Hypothesis \ref{h3.1}\,$(i)$, we introduce the form 
$Q_{A_{\Omega,2m} (a,b,q)}$ in $L^2(\Omega)$ 
generated by $A_{\Omega,2m} (a,b,q)$, via 
\begin{align}
\begin{split}
& Q_{A_{\Omega,2m} (a,b,q)}(f,g):=(f,A_{\Omega,2m} (a,b,q) g)_{L^2(\R^n)},    \\
& f, g \in \dom(Q_{A_{\Omega,2m} (a,b,q)}) := W_0^{2m,2}(\Omega).     \lb{m4.1} 
\end{split} 
\end{align}

\begin{lemma}\lb{l3.6} 
Assume Hypothesis \ref{h3.1}\,$(i)$. 
Then the form $Q_{A_{\Omega,2m} (a,b,q)}$  is closable and its closure in $L^2(\Omega)$, denoted 
by $Q_{A_{F,\Omega,2m} (a,b,q)}$, is the form uniquely associated to the Friedrichs extension 
$A_{F,\Omega,2m} (a,b,q)$ of $A_{\Omega,2m} (a,b,q)$, that is, 
\begin{align}\lb{m10} 
\begin{split}
& Q_{A_{F,\Omega,2m} (a,b,q)} (f,g) 
= \big(A_{F,\Omega,2m} (a,b,q)^{1/2} f, A_{F,\Omega,2m} (a,b,q)^{1/2} g\big)_{L^2(\Omega)},  \\
& f, g \in \dom \big(Q_{A_{F,\Omega,2m} (a,b,q)}) = \dom \big(A_{F,\Omega,2m} (a,b,q)^{1/2}\big) 
= W_0^{m,2}(\Omega).
\end{split} 
\end{align}
\end{lemma}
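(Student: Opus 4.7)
The plan is to apply the standard Friedrichs--Kato construction of a self-adjoint extension via closure of an operator-generated form to the symmetric, nonnegative, closed operator $A_{\Omega,2m}(a,b,q)$ from Theorem \ref{t3.5}, and then to identify the resulting form domain with $W_0^{m,2}(\Omega)$ by transplanting Lemma \ref{l3.3} via zero-extension.

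For the first step, I would invoke the general fact that for any densely defined symmetric operator $S\geq 0$ in a Hilbert space $\cH$, the form $Q_S(f,g):=(f,Sg)_{\cH}$ defined on $\dom(S)$ is densely defined, symmetric, and nonnegative, hence closable; its closure $Q_S^{\mathrm{cl}}$ is the form uniquely associated with the Friedrichs extension $S_F$, and by the first representation theorem $\dom(Q_S^{\mathrm{cl}}) = \dom(S_F^{1/2})$ with $Q_S^{\mathrm{cl}}(f,g) = \big(S_F^{1/2}f, S_F^{1/2}g\big)_{\cH}$ (e.g., \cite[Theorem VI.2.1 and VI.2.23]{Ka80}). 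Applying this with $S=A_{\Omega,2m}(a,b,q)$ (permissible thanks to Theorem \ref{t3.5}) yields closability of $Q_{A_{\Omega,2m}(a,b,q)}$, identifies its closure with $Q_{A_{F,\Omega,2m}(a,b,q)}$, and produces the first equality in \eqref{m10} together with the identification $\dom(Q_{A_{F,\Omega,2m}(a,b,q)}) = \dom\big(A_{F,\Omega,2m}(a,b,q)^{1/2}\big)$.

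The second step is to identify this form domain concretely as $W_0^{m,2}(\Omega)$. For $k\in\bbN$, zero-extension $E\colon W_0^{k,2}(\Omega)\hookrightarrow W^{k,2}(\bbR^n)$ is an isometry, and the locality of $\tau_{2m}(a,b,q)$ furnishes the compatibility identity $\wti A_{2m}(a,b,q)(Ef) = E(A_{\Omega,2m}(a,b,q)f)$ for $f\in W_0^{2m,2}(\Omega)$, from which
\begin{equation}
Q_{A_{\Omega,2m}(a,b,q)}(f,f) = \big(Ef,\wti A_{2m}(a,b,q)\, Ef\big)_{L^2(\bbR^n)}, \quad f\in W_0^{2m,2}(\Omega).
\end{equation}
Combining this with the $\bbR^n$-bound \eqref{m6} of Lemma \ref{l3.3} and the norm-preservation of $E$ yields
\begin{equation}
c\|f\|_{W^{m,2}(\Omega)}^2 \leq Q_{A_{\Omega,2m}(a,b,q)}(f,f) + \|f\|_{L^2(\Omega)}^2 \leq C\|f\|_{W^{m,2}(\Omega)}^2, \quad f\in W_0^{2m,2}(\Omega).
\end{equation}
Since $C_0^\infty(\Omega)\subset W_0^{2m,2}(\Omega)$ is dense in $W_0^{m,2}(\Omega)$ in the $W^{m,2}$-norm (by definition of $W_0^{m,2}$), and the form norm and $W^{m,2}$-norm are equivalent on $W_0^{2m,2}(\Omega)$, completion in the form norm recovers precisely $W_0^{m,2}(\Omega)$, giving \eqref{m10}.

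I expect the main technical step to be the locality identity $\wti A_{2m}(a,b,q)(Ef) = E(A_{\Omega,2m}(a,b,q)f)$ for $f\in W_0^{2m,2}(\Omega)$: it is transparent for $f\in C_0^\infty(\Omega)$, but for general $f$ one needs an approximation argument combining the density $C_0^\infty(\Omega)\subset W_0^{2m,2}(\Omega)$ (used in the proof of Theorem \ref{t3.5}) with the equivalence \eqref{l5.15} of the graph norm of $A_{\Omega,2m}(a,b,q)$ and the $W^{2m,2}$-norm, together with the corresponding equivalence \eqref{k3} on $\bbR^n$, to pass to the limit on both sides.
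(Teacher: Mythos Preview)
Your proposal is correct and follows essentially the same route as the paper: closability via the abstract Friedrichs construction for nonnegative symmetric operators, followed by identification of the form domain through zero-extension to $\bbR^n$ and application of the norm equivalence \eqref{m6} from Lemma \ref{l3.3}. The paper handles your locality identity more tersely, simply remarking that $\supp(\wti f)\subseteq\Omega$ to pass from $(\wti f,\wti A_{2m}(a,b,q)\wti f)_{L^2(\bbR^n)}$ to $(f,A_{\Omega,2m}(a,b,q)f)_{L^2(\Omega)}$, but this is exactly the step you make explicit (and your approximation argument for it is the right justification).
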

\begin{proof}
That $Q_{A_{\Omega,2m} (a,b,q)}$ is closable follows from abstract results relating sectorial (in particular, non-negative, symmetric) operators and their forms (cf., e.g., 
\cite[Theorem~IV.2.3]{EE87}, \cite[Theorem~VI.1.27]{Ka80}, \cite[Theorem~X.23]{RS75}). 
In order to prove \eqref{m10}, we fix $f \in W_0^{2m,2}(\Omega)$ and denote its extension 
by zero outside of $\Omega$ by $\wti f$. Then $\wti f \in W^{2m,2}(\R^n)$ and employing \eqref{m6} with $u$ replaced by $\wti f$, and using the fact that supp$(\wti u)\subseteq \Omega$, one obtains 
\begin{equation}\lb{m11}
c\|f\|^2_{W_0^{m,2}(\Omega)} \leq (f,A_{\Omega,2m} (a,b,q) f)_{L^2(\Omega)} 
+ \|f\|^2_{L^2(\Omega)} \leq C\|f\|^2_{W_0^{m,2}(\Omega)},
\end{equation}
that is, 
\begin{equation}\lb{m12}
c\|f\|^2_{W_0^{m,2}(\Omega)} \leq Q_{A_{\Omega,2m}(a,b,q)}(f,f) + \|f\|^2_{L^2(\Omega)} 
\leq C \|f\|^2_{W_0^{m,2}(\Omega)},
\end{equation}
for some finite constants $0<c<C$, independent of $f$, proving that the domain of the closure of the form 
$Q_{A_{\Omega,2m} (a,b,q)}$ equals 
$W_0^{m,2}(\Omega)$. Together with \cite[Sect.~VI.2.3]{Ka80} or 
\cite[Theorem~X.23]{RS75}, and the second representation theorem for forms (see, e.g., 
\cite[Theorem IV.2.6, Theorem IV.2.8] {EE87}, \cite[Theorem~VI.2.23]{Ka80}), this proves \eqref{m10}.  
\end{proof}

In Section \ref{s4} we will also use the following explicit representation of the form 
$Q_{A_{F,\Omega,2m} (a,b,q)}$, 
\begin{align}\lb{m15} 
& Q_{A_{F,\Omega,2m} (a,b,q)} (f,g)= 
\begin{cases} 
(\tau_{2 \ell} f,\tau_{2 \ell} g)_{L^2(\Omega)}, \quad m=2 \ell, \, \ell \in \bbN, \\
\sum_{j,k=1}^n ((- i \partial_j - b_j) \tau_{2 \ell} f, a_{j,k} (-i \partial_k - b_k) \tau_{2 \ell} g)_{L^2(\Omega)} \\
+ (\tau_{2 \ell} f, q \, \tau_{2 \ell} g)_{L^2(\Omega)}, \quad m=2 \ell+1, \, k \in \bbN \cup \{0\},  
\end{cases}      \no \\
& \hspace*{8.65cm} f,g\in W_0^{m,2}(\Omega).   
\end{align} 
(Again, we use the convention $\tau_0 = 1$.)

Finally, we introduce the following symmetric form in $L^2(\Omega)$, 
\begin{align}\lb{l5.16} 
\begin{split}
&\mathfrak{a}_{\Omega,4m,a,b,q} (f,g) 
:=(A_{\Omega, 2m} (a,b,q) f,A_{\Omega, 2m} (a,b,q) g)_{L^2(\Omega)}, \\
&f,g\in\dom(\mathfrak{a}_{\Omega, 4m,a,b,q}):=\dom(A_{\Omega, 2m} (a,b,q)),   
\end{split} 
\end{align}
and the Hilbert space 
\begin{align}
\begin{split} 
\cH_{A_{\Omega,2m} (a,b,q)}
:=& (\dom( A_{\Omega,2m} (a,b,q)), \mathfrak{a}_{\Omega,4m,a,b,q} (\, \cdot \,,\, \cdot \,))  \\
=&\big (W_0^{2m,2} (\Omega), \mathfrak{a}_{\Omega,4m,a,b,q} (\, \cdot \,,\, \cdot \,)\big), 
\lb{3.52a}
\end{split}
\end{align} 
equipped with the scalar product $ \mathfrak{a}_{\Omega,4m,a,b,q} (\, \cdot \,,\, \cdot \,)$.

\begin{lemma}\lb{l3.7}
Assume Hypothesis \ref{h3.1}\,$(i)$. Then the Hilbert space $\cH_{A_{\Omega,2m} (a,b,q)}$ embeds 
compactly into $L^2(\Omega)$.
\end{lemma}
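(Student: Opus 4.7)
The plan is to reduce the question to the classical Rellich compactness of $W_0^{2m,2}(\Omega) \hookrightarrow L^2(\Omega)$, by showing that the norm on $\cH_{A_{\Omega,2m}(a,b,q)}$ induced by $\mathfrak{a}_{\Omega,4m,a,b,q}$ is equivalent to $\|\,\cdot\,\|_{W_0^{2m,2}(\Omega)}$. Once this is established, any bounded sequence in $\cH_{A_{\Omega,2m}(a,b,q)}$ is bounded in $W_0^{2m,2}(\Omega)$, and hence admits a subsequence converging in $L^2(\Omega)$.

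For the norm equivalence I would start from the two-sided Sobolev estimate \eqref{l5.15} of Theorem \ref{t3.5}. The only discrepancy between \eqref{l5.15} and the desired equivalence is the extra $\|f\|_{L^2(\Omega)}^2$ appearing in the middle. To absorb it I would invoke the strict positivity bound $A_{\Omega,2m}(a,b,q) \geq \varepsilon_0 I_\Omega$ for some $\varepsilon_0 > 0$ (established elsewhere in Section \ref{s3} via the diamagnetic inequality together with Poincar\'e's inequality on the bounded set $\Omega$). By Cauchy--Schwarz this implies
\[
\|A_{\Omega,2m}(a,b,q) f\|_{L^2(\Omega)} \geq \varepsilon_0 \|f\|_{L^2(\Omega)}, \quad f \in W_0^{2m,2}(\Omega),
\]
so that the middle of \eqref{l5.15} is trapped between $\|A_{\Omega,2m}(a,b,q) f\|_{L^2(\Omega)}^2$ and $\big(1 + \varepsilon_0^{-2}\big) \|A_{\Omega,2m}(a,b,q) f\|_{L^2(\Omega)}^2$. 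Combined with \eqref{l5.15} this yields
\[
c'\|f\|_{W_0^{2m,2}(\Omega)}^2 \leq \|A_{\Omega,2m}(a,b,q) f\|_{L^2(\Omega)}^2 \leq C'\|f\|_{W_0^{2m,2}(\Omega)}^2, \quad f \in W_0^{2m,2}(\Omega),
\]
for suitable $0 < c' < C'$. The Rellich step is then standard: extension by zero embeds $W_0^{2m,2}(\Omega)$ isometrically into the closed subspace of $W^{2m,2}(\bbR^n)$ consisting of functions supported in $\overline{\Omega}$, and the boundedness of $\Omega$ together with the classical Rellich--Kondrachov theorem (which requires no regularity of $\partial\Omega$) delivers compactness of the embedding into $L^2(\Omega)$.

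The main obstacle, to my mind, is securing the strict positivity $A_{\Omega,2m}(a,b,q) \geq \varepsilon_0 I_\Omega$, since only non-negativity \eqref{3.32a} has been recorded up to this point. This bound hinges on the boundedness of $\Omega$ (through Poincar\'e's inequality on $W_0^{m,2}(\Omega)$), the uniform ellipticity \eqref{f3}, the non-negativity of $q$, and the diamagnetic inequality to neutralize the magnetic potential $b$; the passage from $m=1$ to general $m$ proceeds via $\tau_{2m}(a,b,q) = (\tau_2(a,b,q))^m$ together with spectral calculus applied to the corresponding Friedrichs extension.
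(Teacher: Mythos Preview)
Your proposal is correct and follows the same route as the paper: invoke the norm equivalence \eqref{l5.15} together with the Rellich compact embedding $W_0^{2m,2}(\Omega)\hookrightarrow L^2(\Omega)$. You are in fact more careful than the paper's one-line proof, since you correctly note that passing from the graph-norm equivalence in \eqref{l5.15} to equivalence with the $\cH_{A_{\Omega,2m}(a,b,q)}$-norm $\|A_{\Omega,2m}(a,b,q)\,\cdot\,\|_{L^2(\Omega)}$ requires the strict positivity bound, which the paper only records afterwards in Theorem~\ref{t3.8} and Lemma~\ref{l3.8}; your sketch of how that bound is obtained (diamagnetic inequality, Poincar\'e on bounded $\Omega$, then iteration to higher $m$) matches the paper's argument there.
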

\begin{proof}
This is a consequence of the compact embedding of $\mathring{W}^{2m,2}(\Omega)$ into $L^2(\Omega)$ (see, e.g., \cite[Theorem~V.4.18]{EE87}) and the inequalities \eqref{l5.15}.
\end{proof}

At this point we strengthen the lower bounds \eqref{3.30}, \eqref{3.32a}:

\begin{theorem}\lb{t3.8}
Assume Hypothesis \ref{h3.1}\,$(i)$ with $m=1$. Then there exists 
$\varepsilon > 0$, depending only on $a$ and $\Omega$, such that  
$A_{min,\Omega,2} (a,b,q)$ defined as in \eqref{3.29} with $m=1$ satisfies 
\begin{equation}\lb{3.44a} 
A_{min,\Omega,2} (a,b,q) \geq \varepsilon I_{\Omega},
\end{equation} 
and hence, 
\begin{equation}\lb{3.45a} 
A_{\Omega,2} (a,b,q) \geq \varepsilon I_{\Omega} \, \text{ and } \, 
A_{F,\Omega,2} (a,b,q) \geq \varepsilon I_{\Omega}.
\end{equation} 
\end{theorem}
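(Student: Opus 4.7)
The plan is to prove $A_{min,\Omega,2}(a,b,q) \geq \varepsilon I_\Omega$ directly at the level of the quadratic form on $C_0^\infty(\Omega)$, and then pass to the closure and to the Friedrichs extension using monotonicity of the form under closure.

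Fix $f \in C_0^\infty(\Omega)$. Integrating by parts (which is unproblematic for test functions), one obtains the standard identity
\begin{align*}
\big(f, A_{min,\Omega,2}(a,b,q) f\big)_{L^2(\Omega)}
&= \sum_{j,k=1}^n \big((-i\partial_j - b_j)f,\, a_{j,k}(-i\partial_k - b_k)f\big)_{L^2(\Omega)} \\
&\quad + (f, q f)_{L^2(\Omega)}.
\end{align*}
The uniform ellipticity \eqref{f3} applied pointwise to the complex vector $y_j = (-i\partial_j - b_j)f(x)$ (splitting into real and imaginary parts, using that $a$ is real symmetric) together with $q \geq 0$ yields
$$
\big(f, A_{min,\Omega,2}(a,b,q) f\big)_{L^2(\Omega)} \geq \varepsilon_a \sum_{j=1}^n \|(-i\partial_j - b_j)f\|_{L^2(\Omega)}^2.
$$

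Next I would invoke the diamagnetic inequality, which asserts that for any $f \in C_0^\infty(\Omega)$ and real-valued $b_j \in L^\infty_{\mathrm{loc}}(\bbR^n)$, the function $|f|$ lies in $W_0^{1,2}(\Omega)$ and
$$
|\nabla |f|(x)| \leq |(-i\nabla - b)f(x)| \quad \text{for a.e.\ } x \in \Omega,
$$
so that $\sum_{j=1}^n \|(-i\partial_j - b_j)f\|_{L^2(\Omega)}^2 \geq \|\nabla |f|\|_{L^2(\Omega)}^2$. Since $\Omega$ is bounded (hence contained in some slab), the Poincar\'e inequality for $W_0^{1,2}(\Omega)$ functions—which requires no regularity of $\partial\Omega$—provides a constant $C_P(\Omega)>0$ such that
$$
\|\nabla |f|\|_{L^2(\Omega)}^2 \geq C_P(\Omega)\, \||f|\|_{L^2(\Omega)}^2 = C_P(\Omega)\, \|f\|_{L^2(\Omega)}^2.
$$
Setting $\varepsilon := \varepsilon_a\, C_P(\Omega) > 0$, these estimates combine to give \eqref{3.44a} on the dense domain $C_0^\infty(\Omega)$.

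Finally, since $A_{\Omega,2}(a,b,q) = \ol{A_{min,\Omega,2}(a,b,q)}$ by Theorem \ref{t3.5}, and a lower bound on a symmetric operator is preserved by taking the operator closure (the inequality $(g,Ag)_{L^2(\Omega)} \geq \varepsilon \|g\|_{L^2(\Omega)}^2$ extends from $C_0^\infty(\Omega)$ to $W_0^{2,2}(\Omega)$ by approximating in the graph norm), one obtains $A_{\Omega,2}(a,b,q) \geq \varepsilon I_\Omega$. The Friedrichs extension $A_{F,\Omega,2}(a,b,q)$ has form domain equal to the closure of $\dom(A_{\Omega,2}(a,b,q))$ in the form norm, and the lower bound is preserved under form closure (cf.\ \cite[Theorem~VI.1.27]{Ka80} or the construction of the Friedrichs extension), yielding $A_{F,\Omega,2}(a,b,q) \geq \varepsilon I_\Omega$ and completing \eqref{3.45a}. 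The only genuinely nontrivial ingredient is the diamagnetic inequality, which delivers the crucial reduction from the magnetic gradient to the ordinary gradient of $|f|$ so that the classical Poincar\'e inequality—valid for arbitrary bounded $\Omega$—can be applied; everything else is routine.
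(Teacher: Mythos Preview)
Your proof is correct and follows essentially the same approach as the paper: both arguments use uniform ellipticity to reduce to the magnetic gradient, then the diamagnetic inequality to pass to $\nabla|f|$, and finally the Poincar\'e inequality (equivalently, strict positivity of the Dirichlet Laplacian on a bounded $\Omega$) to obtain the lower bound. The only cosmetic difference is that the paper establishes the bound for $A_{F,\Omega,2}(a,b,q)$ first via the min-max principle on the form domain $W_0^{1,2}(\Omega)$ and deduces the bound for the smaller operators, whereas you work on $C_0^\infty(\Omega)$ first and pass upward; the substance is the same.
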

\begin{proof}
It suffices to prove that there 
exists $\varepsilon>0$ such that $A_{F,\Omega,2} (a,b,q) \geq \varepsilon I_{\Omega}$. Since 
$\dom \big(A_{F,\Omega,2} (a,b,q)^{1/2}\big) = W_0^{1,2}(\Omega)$ according to 
\eqref{m10}, one recalls that  
\begin{equation}
f \in  W_0^{1,2}(\Omega) \, \text{ implies } \, |f| \in W_0^{1,2}(\Omega)
\end{equation}
(cf., e.g., \cite[Corollary~VI.2.4]{EE87}), and that by \cite[Proposition~4.4]{Ou05}, 
\begin{equation}
\partial_j |f| = \Re\big(\sgn\big(\ol f\big) (\partial_j f)\big)  
\text{ a.e.,} \quad f \in W_0^{1,2}(\Omega), 
\; 1 \leq j \leq n,
\end{equation}
with 
\begin{equation}
\sgn (g(x)) = \begin{cases} g(x)/|g(x)|, & \text{if $g(x) \neq 0$,} \\
0, & \text{if $g(x) = 0$.}   \end{cases} 
\end{equation}
Thus, $\nabla |f| = \Re \big(\sgn\big(\ol f\big) (\nabla f)\big)$, $ f \in  W_0^{1,2}(\Omega)$, and 
hence one obtains the diamagnetic inequality on $\Omega$, 
\begin{align}
& |\nabla |f|| \leq \big|\Re \big(\sgn\big(\ol f\big) (\nabla f)\big)\big| 
= \big|\Re \big(\sgn\big(\ol f\big) ((\nabla - i b) f)\big)\big| \leq |(- i \nabla - b) f| \text{ a.e.,}  \no \\
& \hspace*{9cm} f \in  W_0^{1,2}(\Omega), 
\end{align} 
since $b_j$, $1 \leq j \leq n$, are real-valued, according to a device of Kato \cite{Ka72} and Simon \cite{Si76} (see also \cite[Theorem~4.5.1]{BE11}, \cite[Theorem~7.21]{LL01}). Hence, 
employing the min-max principle for the infimum of the spectrum of self-adjoint operators bounded 
from below one estimates,
\begin{align}
& \inf (\sigma(A_{F,\Omega,2} (a,b,q))) 
= \inf_{f \in W_0^{1,2}(\Omega), \, \|f\|_{L^2(\Omega)}=1} 
Q_{A_{F,\Omega,2} (a,b,q)} (f, f)   \no \\
& \quad = \inf_{f \in W_0^{1,2}(\Omega), \, \|f\|_{L^2(\Omega)}=1} 
\big(A_{F,\Omega,2} (a,b,q)^{1/2} f, A_{F,\Omega,2} (a,b,q)^{1/2} f\big)_{L^2(\Omega)} 
\no \\
& \quad = \inf_{f \in W_0^{1,2}(\Omega), \, \|f\|_{L^2(\Omega)}=1} 
\bigg(\sum_{j,k=1}^n ((- i \partial_j - b_j) f, a_{j,k} (-i \partial_k - b_k) f)_{L^2(\Omega)} 
\no \\
& \hspace*{8.5cm} + (f, q \, f)_{L^2(\Omega)}\bigg)   \no \\
& \quad \geq \varepsilon_a 
 \inf_{f \in W_0^{1,2}(\Omega), \, \|f\|_{L^2(\Omega)}=1} 
((- i \nabla - b) f, (-i \nabla - b) f)_{L^2(\Omega)^n}   \no \\
& \quad \geq \varepsilon_a 
 \inf_{f \in W_0^{1,2}(\Omega), \, \|f\|_{L^2(\Omega)}=1} 
(|\nabla |f||, |\nabla |f||)_{L^2(\Omega)}   \no \\
& \quad = \varepsilon_a 
 \inf_{f \in W_0^{1,2}(\Omega), \, \|f\|_{L^2(\Omega)}=1} 
(\nabla |f|, \nabla |f|)_{L^2(\Omega)^n}   \no \\
& \quad \geq \varepsilon_a  \inf_{\varphi \in W_0^{1,2}(\Omega), \, \|\varphi\|_{L^2(\Omega)}=1} 
\big((\nabla \varphi, \nabla \varphi)_{L^2(\Omega)^n}\big)   \no \\
& \quad \geq \varepsilon_a \inf \big(\sigma\big(- \Delta^D_{\Omega}\big)\big)    \no \\
& \quad = \varepsilon_a \varepsilon_{\Omega} =: \varepsilon,
\end{align}
using the fact that $- \Delta^D_{\Omega} \geq \varepsilon_{\Omega} I_{\Omega}$ for some 
$\varepsilon_{\Omega} > 0$, since $\Omega$ is bounded (see, for instance, \cite[p.~31]{Da89}, 
or use domain monotonicity, \cite[p.~270]{RS78} together with the well-known strictly positive lower bounds for a ball or cube that encloses $\Omega$). 
\end{proof}

The result \eqref{3.45a} holds under more general assumptions on the coeffcients $a,b,q$ and 
also for certain boundary conditions other than Dirichlet, but the current setup suffices for our 
discussion in Section \ref{s4} (we intend to revisit this issue elsewhere).  

Next, we note that as a consequence of Hypothesis \ref{h3.1}\,$(i)$, also all higher-order powers  
$A_{\Omega,2m} (a,b,q) = A_{\Omega,2} (a,b,q)^m$, $m \in \bbN$, $m \geq 2$, of $A_{\Omega,2} (a,b,q)$ are strictly positive.  

\begin{lemma} \lb{l3.8}
Assume Hypothesis \ref{h3.1}\,$(i)$. Then there exists $\varepsilon_m >0$ such that 
\begin{equation}
A_{\Omega,2m} (a,b,q) \geq \varepsilon_m I_{\Omega}, \quad m \in \bbN.
\end{equation}
\end{lemma}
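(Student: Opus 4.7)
The plan is to realize $A_{\Omega,2m}(a,b,q)$ as an operator-theoretic restriction of the $m$-th power of the Friedrichs extension $A_{F,\Omega,2}(a,b,q)$ and then propagate the strict positivity bound already established in Theorem \ref{t3.8}. Indeed, Theorem \ref{t3.8} gives $A_{F,\Omega,2}(a,b,q) \geq \varepsilon I_{\Omega}$ for some $\varepsilon > 0$, so the self-adjoint functional calculus yields $A_{F,\Omega,2}(a,b,q)^{m} \geq \varepsilon^{m} I_{\Omega}$, and once the operator inclusion
\begin{equation*}
A_{\Omega,2m}(a,b,q) \subseteq A_{F,\Omega,2}(a,b,q)^{m}
\end{equation*}
is in place, one immediately obtains the desired bound with $\varepsilon_m := \varepsilon^{m}$.

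To establish this inclusion, I would fix $f \in \dom(A_{\Omega,2m}(a,b,q)) = W_0^{2m,2}(\Omega)$ and prove by induction on $\ell \in \{0,1,\dots,m\}$ that
\begin{equation*}
\tau_2(a,b,q)^{\ell} f \in W_0^{2m-2\ell,2}(\Omega) \quad \text{and} \quad A_{F,\Omega,2}(a,b,q)^{\ell} f = \tau_2(a,b,q)^{\ell} f.
\end{equation*}
For the inductive step, whenever $\ell \leq m-1$, the membership $\tau_2(a,b,q)^{\ell} f \in W_0^{2,2}(\Omega) = \dom(A_{\Omega,2}(a,b,q)) \subseteq \dom(A_{F,\Omega,2}(a,b,q))$ legitimizes one further application of $A_{F,\Omega,2}(a,b,q)$, which acts as the differential expression $\tau_2(a,b,q)$ because $A_{F,\Omega,2}(a,b,q)$ extends $A_{\Omega,2}(a,b,q)$ (cf.\ Theorem \ref{t3.5}). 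Taking $\ell=m$ then produces $A_{F,\Omega,2}(a,b,q)^{m} f = \tau_{2m}(a,b,q) f = A_{\Omega,2m}(a,b,q) f$, finishing the inclusion.

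The main step requiring care is the mapping property $\tau_2(a,b,q) \colon W_0^{s,2}(\Omega) \to W_0^{s-2,2}(\Omega)$ for $2 \leq s \leq 2m$, which feeds the induction above. One verifies it by approximation from $C_0^{\infty}(\Omega)$ combined with the Leibniz rule: the regularity $a_{j,k} \in C^{(2m-1)}(\bbR^n) \cap L^{\infty}(\bbR^n)$, $b \in \big[W^{(2m-1),\infty}(\bbR^n)\big]^{n}$, and $q \in W^{(2m-2),\infty}(\bbR^n)$ imposed in Hypothesis \ref{h3.1}\,$(i)$ is exactly tuned so that, upon expanding $\tau_2(a,b,q)$ and differentiating up to $s-2$ times, all resulting coefficient factors remain in $L^{\infty}(\bbR^n)$, which is enough for the products with the derivatives of $f$ to stay in $W_0^{s-2,2}(\Omega)$. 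Once that mapping property is in hand, the rest of the argument is just standard self-adjoint spectral theory, and the conclusion holds without any regularity assumption on $\partial \Omega$.
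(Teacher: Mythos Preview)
Your proof is correct and takes a genuinely different route from the paper's. The paper argues by strong induction on $m$, splitting into the cases $m=2\ell$ and $m=2\ell+1$: in the even case it writes $(f,A_{\Omega,2m}f)_{L^2(\Omega)} = \|A_{\Omega,2}^{\ell}f\|_{L^2(\Omega)}^2$ and invokes the inductive bound $\|A_{\Omega,2\ell}f\|_{L^2(\Omega)} \geq \varepsilon_\ell \|f\|_{L^2(\Omega)}$, while in the odd case it first uses the $m=1$ bound from Theorem~\ref{t3.8} on $A_{\Omega,2}^{\ell}f$ and then the inductive bound. Your argument instead realizes $A_{\Omega,2m}(a,b,q)$ as a restriction of the self-adjoint power $A_{F,\Omega,2}(a,b,q)^{m}$ and reads off the bound $\varepsilon^m$ directly from the spectral calculus, thereby avoiding the even/odd case split altogether. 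Both approaches ultimately rest on the same mapping property $\tau_2(a,b,q)\colon W_0^{s,2}(\Omega)\to W_0^{s-2,2}(\Omega)$ for $2\leq s\leq 2m$ (which the paper uses implicitly when writing $A_{\Omega,2}^{\ell}$ on $W_0^{2m,2}(\Omega)$, and which you identify explicitly); your version has the advantage of making this domain-theoretic content transparent and of producing the constant $\varepsilon_m=\varepsilon^m$ in one stroke, while the paper's version stays entirely within the symmetric operators $A_{\Omega,2k}$ and never needs to invoke the Friedrichs extension.
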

\begin{proof}
We employ induction with respect to $m \in \bbN$. The case $m=1$ holds by 
Hypothesis \ref{h3.1}\,$(i)$. Assume that the statement holds for all $k<m$ and fix any 
$0\not=f\in\dom (A_{\Omega,2m} (a,b,q))$. We consider two cases: \\[1mm] 
$(i)$ $m=2 \ell$, $\ell\in\bbN$. Then due to symmetry of $A_{\Omega,2}(a,b,q)^{\ell}$ one obtains
\begin{equation}\lb{b1}
(f,A_{\Omega,2m} (a,b,q) f)_{L^2({\Omega})} = \big(f,A_{\Omega,2}(a,b,q)^{2\ell} f\big)_{L^2({\Omega})} 
= \big\|A_{\Omega,2}(a,b,q)^{\ell} f\big\|^2_{L^2({\Omega})}. 
\end{equation}
By the induction hypothesis, there exists $\varepsilon_\ell>0$
such that, $A_{\Omega,2\ell} (a,b,q)\geq \varepsilon_{\ell}$, and hence 
\begin{equation}\lb{b3}
\varepsilon_{\ell} \|f\|_{L^2({\Omega})}^2\leq (f,A_{\Omega,2\ell} (a,b,q) f)_{L^2({\Omega})}
\leq \|f\|_{L^2({\Omega})} \|A_{\Omega,2\ell} (a,b,q) f\|_{L^2({\Omega})},
\end{equation}
implying 
\begin{align}
(f,A_{\Omega,2m} (a,b,q) f)_{L^2({\Omega})} 
= \big\|A_{\Omega,2}(a,b,q)^{\ell} f\big\|^2_{L^2({\Omega})} \geq 
\varepsilon_{\ell}^2 \|f\|^2_{L^2({\Omega})} = \varepsilon_{m} \|f\|^2_{L^2({\Omega})},
\end{align}
with $\varepsilon_m = \varepsilon_{\ell}^2$. \\[1mm] 
$(ii)$ $m=2\ell+1$, $\ell\in\bbN$. Then by \eqref{3.44a}
\begin{align}
&(f,A_{\Omega,2m} (a,b,q)f)_{L^2({\Omega})}
= \big(f,A_{\Omega,2}(a,b,q)^{2\ell+1}f\big)_{L^2({\Omega})} 
\no \\
& \quad = \big(A_{\Omega,2}(a,b,q)^{\ell}f, A_{\Omega,2}(a,b,q) 
A_{\Omega,2}(a,b,q)^{\ell}f\big)_{L^2({\Omega})}   \no\\ 
&\quad \geq \varepsilon \big\|A_{\Omega,2}(a,b,q)^{\ell}f\big\|^2_{L^2({\Omega})} 
\geq \varepsilon\varepsilon_{\ell}^2 \|f\|^2_{L^2({\Omega})} = \varepsilon_m \|f\|^2_{L^2({\Omega})},\lb{b2}
\end{align} 
with $\varepsilon_m = \varepsilon \varepsilon_{\ell}^2$. 
\end{proof}

\section{An Upper Bound for the Eigenvalue Counting Function for the Krein--von Neumann and Friedrichs  Extensions of Higher-Order Operators} \lb{s4} 

In this section we derive an upper bound for the eigenvalue counting function for Krein--von Neumann  extensions of higher-order differential operators on open, bounded, nonempty domains $\Omega \subset \bbR^n$. In particular, no assumptions on the boundary of $\Omega$ will be made.

In the following we denote by 
$A_{K, \Omega, 2m} (a,b,q)$ and $A_{F, \Omega, 2m} (a,b,q)$ the Krein--von Neumann and 
Friedrichs extensions of $A_{\Omega,2m} (a,b,q)$ in $L^2(\Omega)$. Since by 
Lemma \ref{l3.7}, $\cH_{A_{\Omega,2m} (a,b,q)}$ embeds 
compactly into $L^2(\Omega)$, $A_{\Omega,2m} (a,b,q)^* A_{\Omega,2m} (a,b,q)$ has purely discrete spectrum by 
Lemma \ref{l2.10}. Equivalently, $A_{\Omega,2m} (a,b,q)^* A_{\Omega,2m} (a,b,q)$ has a 
compact resolvent, in particular, 
\begin{equation} 
[A_{\Omega,2m} (a,b,q)^* A_{\Omega,2m} (a,b,q)]^{-1} \in \cB_{\infty}\big(L^2(\Omega)\big). 
\end{equation} 
Consequenty, also
\begin{equation} 
|A_{\Omega,2m} (a,b,q)|^{-1} = [A_{\Omega,2m} (a,b,q)^* A_{\Omega,2m} (a,b,q)]^{-1/2} \in 
\cB_{\infty}\big(L^2(\Omega)\big), 
\end{equation}       
implying 
\begin{equation}
\big(\hatt A_{K, \Omega, 2m} (a,b,q)\big)^{-1} \in \cB_{\infty}\big(L^2(\Omega)\big) 
\end{equation}  
by \eqref{11.20a}. Thus,  
\begin{equation}
\sigma_{ess}(A_{K, \Omega, 2m} (a,b,q)) \subseteq \{0\}.   
\end{equation} 
We recall that the form $\mathfrak{a}_{\Omega,4m,a,b,q}(\, \cdot\, , \, \cdot \,)$ in $L^2(\Omega)$ associated with the operator $A_{\Omega,2m} (a,b,q)^* A_{\Omega,2m} (a,b,q)$ has been introduced in \eqref{l5.16}. 

Let $\{\lambda_{K, \Omega, j}\}_{j\in\bbN}\subset(0,\infty)$ be the strictly positive eigenvalues 
of $A_{K, \Omega, 2m} (a,b,q)$ enumerated in nondecreasing order, counting multiplicity, and let
\begin{equation}\label{4455}
N(\lambda; A_{K, \Omega, 2m} (a,b,q)):=\#\{j\in\bbN\,|\,0<\lambda_{K,\Omega,j} < \lambda\}, \quad 
\lambda > 0, 
\end{equation}
be the eigenvalue distribution function for $A_{K, \Omega, 2m} (a,b,q)$. 

To derive an effective estimate for $N(\lambda; A_{K, \Omega, 2m} (a,b,q))$ we need to introduce 
one more spectral hypothesis imposed on $\wti A_{2m} (a,b,q)$:

\begin{hypothesis} \lb{h4.1}
Assume Hypothesis \ref{h3.1}. \\
$(i)$ Suppose there exists $\phi:\R^n\times \R^n\rightarrow \C$ such that the operator 
\begin{equation} 
(\bbF f)(\xi):=(2\pi)^{- n/2}\int_{\R^n}f(x)\overline{\phi(x,\xi)} \,d^nx, \quad \xi\in \R^n, 
\end{equation} 
originally defined on functions $f\in L^2(\R^n)$ with compact support, can be extended to a unitary operator in $L^2(\R^n)$, such that 
\begin{equation}\lb{k9}
f \in W^{2,2}(\R^n; d^n x) \, \text{ if and only if } \, |\xi|^2(\bbF f)(\xi)\in L^2(\R^n; d^n \xi),
\end{equation} 
and 
\begin{equation}\lb{k10}
\wti A_{2} (a,b,q)=\bbF^{-1}M_{|\xi|^2}\bbF,
\end{equation}
where $M_{|\xi|^2}$ represents the maximally defined operator of multiplication by $|\xi|^2$ in 
$L^2(\R^n; d^n \xi)$. \\[1mm] 
$(ii)$ In addition, suppose that 
\begin{equation}
\lb{gef}
\sup_{\xi\in \R^n}\|\phi(\, \cdot \,, \xi)\|_{L^2(\Omega)}<\infty. 
\end{equation} 
\end{hypothesis}

\begin{remark} \lb{r4.2} 
$(i)$ As becomes clear from Theorems \ref{t4.3} and \ref{t4.4} below, our primary concerns are the operators $A_{K, \Omega, 2m} (a,b,q)$ and $A_{F, \Omega, 2m} (a,b,q)$ in $L^2(\Omega)$, and hence we are primarily interested  in the coefficients $a,b,q$ on the open, bounded, but otherwise arbitrary, set $\Omega$. However, since the existence of an eigenfunction expansion of a self-adjoint ``continuation'' of this pair of operators to all of $\bbR^n$, denoted by $\wti A_{2m} (a,b,q)$, is a 
crucial tool in our derivation of the bound on the corresponding eigenvalue counting functions of 
$A_{K, \Omega, 2m} (a,b,q)$ and $A_{F, \Omega, 2m} (a,b,q)$, the continuation of the coefficients 
$a,b,q$ through a possibly highly nontrivial boundary $\partial \Omega$ of $\Omega$ becomes a 
nontrivial issue. To avoid intricate technicalities, we chose to simply assume a sufficiently smooth behavior of $a,b,q$ throughout $\bbR^n$ in Hypothesis \ref{h3.1}\,$(i)$. \\[1mm] 
$(ii)$ Hypothesis \ref{h4.1}\,$(i)$ implies that  $\wti A_{2} (a,b,q)$ (and hence any of its powers) 
is spectrally purely absolutely continuous (i.e., its point and singular continuous spectra are 
empty), while Hypothesis \ref{h4.1}\,$(ii)$ requires a uniform $L^2(\Omega)$-bound on 
$\phi(\, \cdot \,, \xi)$, $\xi \in \bbR^n$. In particular, $\phi(\, \cdot \,, \, \cdot \,)$ represent the suitably normalized generalized eigenfunctions of $\wti A_2 (a,b,q)$ satisfying 
\begin{equation}
\wti A_2 (a,b,q) \phi(\, \cdot \,,\xi)=|\xi|^2\phi(\, \cdot \,,\xi),  \quad \xi \in \bbR^n,
\end{equation} 
in the distributional sense. In the special Laplacian case $a = I_n$, $b= q = 0$, one obtains
\begin{equation} 
\phi(x, \xi) = e^{i \xi \cdot x}, \quad 
\|\phi(\, \cdot \,, \xi)\|^2_{L^2(\Omega)} = |\Omega|, \quad (x, \xi) \in \bbR^{2n}.
\end{equation}
$(iii)$ In the case $a = I_n$, and with the exception of a possible zero-energy resonance 
and/or eigenvalue of $\wti A_{2}(I_n,b,q)$ in $L^2(\bbR^n)$, we 
expect Hypothesis \ref{h4.1} to hold for $\wti A_{2} (I_n,b,q)$ under the regularity assumptions 
made on $b,q$ in Hypothesis \ref{h3.1}\,$(i)$ assuming in addition that $b$ and $q$ 
have sufficiently fast decay as $|x| \to \infty$ (e.g., if  $b, q$  have compact support). While 
we have not found the corresponding statement in the literature, and an attempt to prove it in full generality would be an independent project, we will illustrate in our final Section \ref{s5} explicit situations in which Hypothesis \ref{h4.1} holds for $a = I_n$. The case $a \neq I_n$, on the other hand,  is much more involved due trapping/non-trapping issues which affect the existence of bounds of the type \eqref{egs}; we refer, for instance, to \cite{Bu01}, \cite{Bu02}, \cite{DDZ15}, \cite{Vo00}, 
\cite{Vo00a}, and the literature therein. \\[1mm]
$(iv)$ We note from the outset, that a zero-energy resonance and/or eigenvalue of $\wti A_{2m}$ cannot be excluded even in the special case $a = I_n$, $b=0$, and $q \in C_0^{\infty}(\bbR^n)$. However, the existence of such zero-energy resonances or eigenvalues is highly unstable with 
respect to small variations of $a,b,q$ and hence their absence holds generically. In particular, 
by slightly varying $R_0  > 0$ in Hypothesis \ref{h3.1}\,$(ii)$, or the $\varepsilon$-neighborhood 
$\Omega_{\varepsilon}$ of $\Omega$ mentioned after \eqref{ajk}, or by slightly perturbing the coefficients $a$, $b$, or $q$ outside $B_n(0; R_0)$, or outside $\Omega_{\varepsilon}$, one can guarantee the absence of such zero-energy resonances and/or eigenvalues. Since we are primarily interested in the operators $A_{K, \Omega, 2m} (a,b,q)$ and $A_{F, \Omega, 2m} (a,b,q)$ in $L^2(\Omega)$, we can indeed freely choose the form of $a, b, q$ in an $\varepsilon$-neighborhood outside of $\Omega$, especially, in a neighborhood of infinity. ${}$ \hfill $\diamond$ 
\end{remark}

With the standard notation 
\begin{equation}
x_+ := \max \, (0, x), \quad x \in \bbR,
\end{equation}
we have the following estimate for $N(\, \cdot \, ;A_{K, \Omega, 2m} (a,b,q))$ (extending the 
results in \cite{La97} where the special case $a = I_n$, $b=q=0$ has been considered): 

\begin{theorem} \lb{t4.3}
Assume Hypothesis \ref{h4.1}. Then the following estimate holds:
\begin{align}
& N(\lambda; A_{K, \Omega, 2m} (a,b,q)) \leq \frac {v_n} {(2\pi)^n}	
\bigg(1+\frac{2m}{2m+n}\bigg)^{n/(2m)} \sup_{\xi\in\R^n}\|\phi(\, \cdot \,, \xi)\|^2_{L^2(\Omega)} \, 
\lambda^{n/(2m)},     \no \\
& \hspace*{8.7cm} \, \text{ for all } \,  \lambda > 0,    \lb{3.36} 
\end{align} 
where $v_n := \pi^{n/2}/\Gamma((n+2)/2)$ denotes the $($Euclidean$)$ volume of the unit ball in 
$\bbR^n$ $($$\Gamma(\cdot)$ being the Gamma function$)$, and $\phi (\, \cdot \, , \, \cdot \,)$ 
represents the suitably normalized generalized eigenfunctions of $\wti A_2 (a,b,q)$ satisfying 
$\wti A_2 (a,b,q) \phi(\, \cdot \,,\xi)=|\xi|^2\phi(\, \cdot \,,\xi)$ in the distributional sense 
$($cf.\ Hypothesis \ref{h4.1}$)$. 
\end{theorem}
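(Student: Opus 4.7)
The plan is to combine the abstract buckling correspondence of Lemma~\ref{l2.7} with the distorted Fourier representation supplied by Hypothesis~\ref{h4.1}, establish a Bessel-type pointwise bound for the relevant eigenfunction sum, and close with a Berezin--Lieb/Jensen argument whose sharp optimization is carried out in Appendix~\ref{sA}.

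First, Lemmas~\ref{l2.7} and \ref{l2.9} allow me to reformulate $N(\lambda; A_{K,\Omega,2m}(a,b,q))$ as the number of buckling eigenvalues $\lambda_j$ strictly less than $\lambda$, where $\mathfrak{a}(f,u_j) = \lambda_j \mathfrak{b}(f,u_j)$ for every $f \in W_0^{2m,2}(\Omega)$. Normalizing so that $\mathfrak{b}(u_j,u_k) = \delta_{jk}$ (hence $\mathfrak{a}(u_j,u_k) = \lambda_j \delta_{jk}$) yields $\{v_j := \lambda_j^{-1/2} A_{\Omega,2m}(a,b,q) u_j\}_{j\in\bbN}$ orthonormal in $L^2(\Omega)$. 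Extending $u_j$ by zero to $\tilde u_j \in W^{2m,2}(\bbR^n)$ and setting $\psi_j := \bbF \tilde u_j$, the identity $\wti A_{2m}(a,b,q) = \bbF^{-1} M_{|\xi|^{2m}} \bbF$, which follows from Hypothesis~\ref{h4.1}\,$(i)$ via functional calculus applied to $\wti A_2(a,b,q)$, transforms the two orthogonality relations into
\begin{equation*}
\int_{\bbR^n} |\xi|^{2m}\overline{\psi_j(\xi)}\psi_k(\xi) \, d^n\xi = \delta_{jk}, \qquad \int_{\bbR^n} |\xi|^{4m}\overline{\psi_j(\xi)}\psi_k(\xi) \, d^n\xi = \lambda_j\delta_{jk}.
\end{equation*}
In particular, $\sigma_j(\xi) := \lambda_j^{-1}|\xi|^{4m}|\psi_j(\xi)|^2$ is a probability density on $\bbR^n$ whose expectation of $|\xi|^{-2m}$ equals $1/\lambda_j$.

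The key pointwise bound comes from Bessel's inequality for the $L^2(\Omega)$-orthonormal system $\{v_j\}_{j\in\bbN}$, tested against $\phi(\, \cdot \,, \xi)\chi_\Omega$ (whose $L^2(\Omega)$-norm is finite and uniformly bounded in $\xi$ by Hypothesis~\ref{h4.1}\,$(ii)$). The vanishing of $u_j$ and its normal derivatives up to order $2m-1$ on $\partial \Omega$ permits $2m$-fold integration by parts, which combined with the distributional identity $\tau_{2m}(a,b,q)\phi(\, \cdot \,, \xi) = |\xi|^{2m}\phi(\, \cdot \,, \xi)$ produces the formula $(A_{\Omega,2m}(a,b,q) u_j, \phi(\, \cdot \,, \xi))_{L^2(\Omega)} = |\xi|^{2m}(2\pi)^{n/2}\overline{\psi_j(\xi)}$. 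Bessel then yields the crucial estimate
\begin{equation*}
\sum_{j\in\bbN} \sigma_j(\xi) \, = \, \sum_{j\in\bbN} \lambda_j^{-1} |\xi|^{4m} |\psi_j(\xi)|^2 \, \leq \, (2\pi)^{-n} \|\phi(\, \cdot \,, \xi)\|^2_{L^2(\Omega)}, \quad \xi \in \bbR^n.
\end{equation*}

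Finally, the Berezin--Lieb/Jensen step. For any nonnegative convex function $f$, Jensen's inequality applied fibrewise to the densities $\sigma_j$, combined with the bound above, gives
\begin{equation*}
\sum_{j\in\bbN} f(1/\lambda_j) \, \leq \, \int_{\bbR^n} f(|\xi|^{-2m}) \sum_{j\in\bbN} \sigma_j(\xi) \, d^n\xi \, \leq \, (2\pi)^{-n} \sup_{\xi \in \bbR^n} \|\phi(\, \cdot \,, \xi)\|^2_{L^2(\Omega)} \int_{\bbR^n} f(|\xi|^{-2m}) \, d^n\xi.
\end{equation*}
Combining this with the elementary inequality $N(\lambda_0)(1/\lambda_0 - 1/\lambda) \leq \sum_{j\in\bbN}(1/\lambda_j - 1/\lambda)_+$ valid for $0 < \lambda_0 < \lambda$, and optimizing over the free parameters, will produce \eqref{3.36}. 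The hard part will be precisely this final optimization: a naïve one-parameter choice $f(t) = (t - 1/\lambda)_+$ followed by minimization over $\lambda$ only yields the inferior constant $(n/(n-2m))^{n/(2m)}$, and moreover diverges for $n \leq 2m$; the sharp constant $(1 + 2m/(n+2m))^{n/(2m)}$, finite and valid for all $n, m \in \bbN$, will emerge from the refined minimization carried out in Appendix~\ref{sA}.
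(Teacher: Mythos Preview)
Your approach is a genuine alternative route via a Li--Yau/Berezin--Lieb argument applied directly to the buckling eigenfunctions, and everything up to and including the pointwise Bessel estimate $\sum_j \sigma_j(\xi)\le (2\pi)^{-n}\|\phi(\cdot,\xi)\|_{L^2(\Omega)}^2$ is correct. However, the final step has a real gap: your Jensen framework cannot produce the integral that Appendix~\ref{sA} minimizes, so invoking Appendix~\ref{sA} is not justified.

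The issue is structural. Your probability densities $\sigma_j$ carry exactly one moment constraint, $\int|\xi|^{-2m}\sigma_j=1/\lambda_j$, so Jensen yields bounds of the form $\sum_j f(1/\lambda_j)\le C\int f(|\xi|^{-2m})\,d^n\xi$ for convex $f$ of \emph{one} variable. After the counting-function lower bound and optimization over $f$ (equivalently over the single cutoff $\tau$ in $f(t)=(t-\tau)_+$), the best this yields is the constant $(n/(n-2m))^{n/(2m)}$, which you yourself observe is both inferior and divergent for $n\le 2m$. Appendix~\ref{sA}, by contrast, minimizes $\alpha^{-1}\int_{\bbR^n}[\alpha+|\eta|^{2m}-|\eta|^{4m}]_+\,d^n\eta$, a genuinely two-parameter problem that does \emph{not} arise from your single-moment Jensen inequality. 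There is no choice of convex $f$ in your framework that reproduces this integrand. The paper obtains the two-parameter integral by a different device: it introduces the auxiliary self-adjoint operator $L_{\Omega,4m,\lambda}=A_{\Omega,2m}^*A_{\Omega,2m}-\lambda A_{\Omega,2m}$, whose eigenfunctions $\{\varphi_j\}$ form a \emph{full} orthonormal basis of $L^2(\Omega)$ (not merely of $\hatt\cH$), and then uses the Berezin-type bound $n_-(L)\le\mu^{-1}\sum_j[\mu-\mu_j]_+$. Writing $\mu_j=\int(|\xi|^{4m}-\lambda|\xi|^{2m})|\bbF\tilde\varphi_j|^2\,d^n\xi$ and applying $[\,\cdot\,]_+$-convexity plus Parseval (equality, not just Bessel) produces $\mu^{-1}\int[\mu-|\xi|^{4m}+\lambda|\xi|^{2m}]_+M(\xi)\,d^n\xi$, and only \emph{this} is the object handled in Appendix~\ref{sA}. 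The extra parameter $\mu$ (absent in your scheme) is what brings the constant down to $(1+2m/(2m+n))^{n/(2m)}$ and makes the bound finite for all $m,n$.

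A minor side remark: for arbitrary open bounded $\Omega$ with no regularity on $\partial\Omega$, the phrase ``normal derivatives up to order $2m-1$ on $\partial\Omega$'' is not meaningful; the correct justification for your integration-by-parts identity is simply that $u_j\in W_0^{2m,2}(\Omega)$ implies $\tilde u_j\in W^{2m,2}(\bbR^n)$ with $\partial^\alpha\tilde u_j=\widetilde{\partial^\alpha u_j}$ for $|\alpha|\le 2m$, whence $(A_{\Omega,2m}u_j)\,\tilde{}=\wti A_{2m}\tilde u_j$ and the Fourier-side identity follows.
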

\begin{proof}
Following our abstract Section \ref{s2}, we introduce in addition to the symmetric form 
$\mathfrak{a}_{\Omega,4m,a,b,q}$ in $L^2(\Omega)$ (cf.\ \eqref{l5.16}), the form 
\begin{align} 
\begin{split}  
& \mathfrak{b}_{\Omega,2m,a,b,q}(f,g) :=(f,A_{\Omega, 2m} (a,b,q)g)_{L^2(\Omega)}, \\ 
& f,g\in\dom(\mathfrak{b}_{\Omega,2m,a,b,q}):=\dom(A_{\Omega, 2m} (a,b,q)).    
\end{split} 
\end{align}
By Lemma \ref{l2.7}, particularly, by \eqref{2.45}, one concludes that 
\begin{align}
\begin{split} 
& N(\lambda; A_{K, \Omega, 2m} (a,b,q)) \leq \max \big(\dim\, \big\{f \in \dom(A_{\Omega, 2m} (a,b,q)) \,\big|    \\
& \hspace*{3.2cm} 
\mathfrak{a}_{\Omega,4m,a,b,q}(f,f) 
- \lambda \, \mathfrak{b}_{\Omega,2m,a,b,q}(f,f) < 0\big\}\big).   \lb{3.32} 
 \end{split} 
\end{align}
Here we also employed \eqref{2.47} and the fact that 
\begin{align}
\begin{split} 
& \mathfrak{a}_{\Omega,4m,a,b,q}(f_{K,\Omega,j},f_{K,\Omega,j})) - \lambda \, 
\mathfrak{b}_{\Omega,2m,a,b,q}(f_{K,\Omega,j},f_{K,\Omega,j})    \\ 
& \quad = (\lambda_{K,\Omega,j} - \lambda) \|f_{K,\Omega,j}\|_{L^2(\Omega)}^2 < 0,
\end{split} 
\end{align}
where $f_{K,\Omega,j} \in \dom(A_{\Omega, 2m} (a,b,q)) \backslash \{0\}$ additionally satisfies 
\begin{align} 
\begin{split} 
& f_{K,\Omega,j} \in \dom(A_{\Omega, 2m} (a,b,q)^* A_{\Omega, 2m} (a,b,q)) \, \text{ and } \\ 
&A_{\Omega, 2m} (a,b,q)^* A_{\Omega, 2m} (a,b,q) f_{K,\Omega,j} 
= \lambda_{K,\Omega,j} \, A_{\Omega, 2m} (a,b,q) f_{K,\Omega,j}. 
\end{split} 
\end{align}
To further analyze \eqref{3.32} we now fix $\lambda \in (0,\infty)$ and introduce the auxiliary operator 
\begin{align}
\begin{split}
& L_{\Omega, 4m, \lambda} (a,b,q) := A_{\Omega, 2m} (a,b,q)^* A_{\Omega, 2m} (a,b,q) 
- \lambda \, A_{\Omega, 2m} (a,b,q), \\
& \dom(L_{\Omega, 4m, \lambda} (a,b,q)) := \dom(A_{\Omega, 2m} (a,b,q)^* A_{\Omega, 2m} (a,b,q)). 
\end{split} 
\end{align}
By Lemma \ref{l2.8}, $L_{\Omega, 4m, \lambda} (a,b,q)$ is self-adjoint, bounded from below, with purely 
discrete spectrum as its form domain satisfies (cf.\ \eqref{3.52a})
\begin{equation} 
\dom\big(|L_{\Omega, 4m, \lambda} (a,b,q)|^{1/2}\big) = \cH_{A_{\Omega, 2m} (a,b,q)}, 
\end{equation}  
and the latter embeds compactly into $L^2(\Omega)$ by Lemma \ref{l3.7} (cf.\ Lemma \ref{l2.10}). 
We will study the auxiliary eigenvalue problem,
\begin{equation}
L_{\Omega, 4m, \lambda} (a,b,q) \varphi_j = \mu_j \varphi_j, \quad 
\varphi_j \in \dom(L_{\Omega, 4m, \lambda} (a,b,q)), 
\end{equation}
where $\{\varphi_j\}_{j \in \bbN}$ represents an orthonormal basis of eigenfunctions in 
$L^2(\Omega)$ and for simplicity of notation we repeat the eigenvalues $\mu_j$ of 
$L_{\Omega, 4m, \lambda} (a,b,q)$ according to their multiplicity. Since 
$\varphi_j \in W_0^{2m,2} (\Omega)$, the zero-extension of $\varphi_j$ to all of $\bbR^n$, 
\begin{equation}
\wti \varphi_j (x) := \begin{cases} \varphi_j(x), & x \in \Omega, \\ 0, & 
x \in \bbR^n \backslash \Omega,  \end{cases}
\end{equation} 
satisfies  
\begin{equation} 
\wti \varphi_j \in W^{2m,2} (\bbR^n), \quad 
\partial^{\alpha} \wti \varphi_j = \wti{\partial^{\alpha} \varphi_j}, \quad 0 \leq |\alpha| \leq 2m. 
\end{equation} 

Next, given $\mu > 0$, one estimates
\begin{align} 
\begin{split} 
\mu^{-1} \sum_{\substack{j \in \bbN \\ \mu_j < \mu}} (\mu - \mu_j) & \geq 
\mu^{-1} \sum_{\substack{j \in \bbN, \\ \mu_j < 0, \, \mu_j < \mu}} (\mu - \mu_j) \geq 
\mu^{-1} \sum_{\substack{j \in \bbN, \\ \mu_j < 0, \, \mu_j < \mu}} \mu   \\
& = n_-(L_{\Omega, 4m, \lambda} (a,b,q)), 
\end{split} 
\end{align}
where $n_-(L_{\Omega, 4m, \lambda} (a,b,q))$ denotes the number of strictly negative eigenvalues of $L_{\Omega, 4m,\lambda} (a,b,q)$. Combining, Lemma \ref{l2.9} and \eqref{3.32} one concludes that 
\begin{align}
& N(\lambda; A_{K, \Omega, 2m} (a,b,q)) 
\leq \max \big(\dim\, \big\{f \in \dom(A_{min, \Omega, 2m}) \,\big|    \no \\ 
& \hspace*{2,8cm}
 \mathfrak{a}_{\Omega,4m, a,b,q}(f,f) - \lambda \, \mathfrak{b}_{\Omega,2m,a,b,q}(f,f) < 0\big\}\big)  
 \lb{3.44}  \\ 
& \quad = n_-(L_{\Omega, 4m, \lambda} (a,b,q)) \leq 
\mu^{-1} \sum_{\substack{j \in \bbN \\ \mu_j < \mu}} (\mu - \mu_j) 
= \mu^{-1} \sum_{j \in \bbN} [\mu - \mu_j]_+, \quad \mu > 0.     \no 
\end{align} 

Next, we focus on estimating the right-hand side of \eqref{3.44}. 
\begin{align}
& N(\lambda; A_{K, \Omega, 2m} (a,b,q)) \leq \mu^{-1} \sum_{j \in \bbN} (\mu - \mu_j)_+ 
= \mu^{-1} \sum_{j \in \bbN} \big[(\varphi_j,(\mu - \mu_j) \varphi_j)_{L^2(\Omega)}\big]_+ \no \\   
&\quad = \mu^{-1} \sum_{j \in \bbN} \Big[\mu \|\varphi_j\|_{L^2(\Omega)}^2  
- \|A_{\Omega,2m} (a,b,q)\varphi_j\|_{L^2(\Omega)}^2    \no \\
&  \hspace*{3.3cm} 
+\lambda (\varphi_j, A_{\Omega,2m} (a,b,q) \varphi_j)_{L^2(\Omega)}\Big]_+   \no \\ 
&\quad = \mu^{-1} \sum_{j \in \bbN} \Big[\mu \|\wti \varphi_j\|_{L^2(\bbR^n)}^2  
-\big\|\wti A_{2m} (a,b,q) \wti \varphi_j\big\|_{L^2(\bbR^n)}^2     \no \\ 
& \hspace*{3.4cm} 
+\lambda  \big(\wti \varphi_j, \wti A_{2m} (a,b,q) \wti \varphi_j\big)_{L^2(\bbR^n)}\Big]_+   \no \\
&\quad = \mu^{-1} \sum_{j \in \bbN} \bigg[\int_{\bbR^n} 
\big[\mu -|\xi|^{4m} +\lambda|\xi|^{2m}\big] 
|(\bbF{\wti \varphi}_j)(\xi)|^2 \, d^n \xi\bigg]_+   \no \\
&\quad \leq \mu^{-1} \sum_{j \in \bbN} \int_{\bbR^n} 
\big[\mu -|\xi|^{4m} +\lambda|\xi|^{2m}\big]_+  
|(\bbF{\wti \varphi}_j)(\xi)|^2 \, d^n \xi    \no \\
&\quad  \leq \mu^{-1} \int_{\bbR^n} 
\big[\mu -|\xi|^{4m} +\lambda|\xi|^{2m}\big]_+  
\sum_{j \in \bbN} |(\bbF{\wti \varphi}_j)(\xi)|^2 \, d^n \xi. \lb{3.45} 
\end{align}
Since $\Omega$ is bounded, $\wti{\varphi}_j$ has compact support and hence  
\begin{align}
& (\bbF{\wti \varphi}_j)(\xi)=(2\pi)^{- n/2}\int_{\R^n}{\wti \varphi}_j(x)\overline{\phi(x,\xi)} \, d^nx,
\end{align}
and 
\begin{align} 
\begin{split} 
& \sum_{j \in \bbN} |(\bbF{\wti \varphi}_j)(\xi)|^2 
= (2\pi)^{-n}\sum_{j \in \bbN} \bigg|\int_{\R^n}{\wti \varphi}_j(x)\overline{\phi(x,\xi)} \, d^nx\bigg|^2 \\
& \quad 
=(2\pi)^{-n}\sum_{j \in \bbN} \big|\int_{\Omega}\varphi_j(x)\overline{\phi(x,\xi)} \, d^nx\big|^2 
= (2\pi)^{-n}\|\phi(\, \cdot \,, \xi)\|^2_{L^2(\Omega, d^nx)},    \lb{l3.51} 
\end{split} 
\end{align}
are well-defined. Combining \eqref{3.45} and \eqref{l3.51} one arrives at 
\begin{align}
& N(\lambda; A_{K, \Omega, 2m} (a,b,q)) \leq  \mu^{-1} \int_{\bbR^n} 
\big[\mu -|\xi|^{4m} +\lambda|\xi|^{2m}\big]_+  
\sum_{j \in \bbN} |(\bbF{\wti \varphi}_j)(\xi)|^2 \, d^n \xi\no\\
& \quad = (2\pi)^{-n}  \mu^{-1}\int_{\bbR^n} 
\big[\mu -|\xi|^{4m} +\lambda|\xi|^{2m}\big]_+  
\|\phi(\, \cdot \,, \xi)\|^2_{L^2(\Omega)} \, d^n \xi\no\\
& \quad \leq (2\pi)^{-n}\sup_{\xi\in\R^n}\|\phi(\, \cdot \,, \xi)\|^2_{L^2(\Omega)}  \mu^{-1}\int_{\bbR^n} \lb{l3.52}
\big[\mu -|\xi|^{4m} +\lambda|\xi|^{2m}\big]_+  \, d^n \xi.    
\end{align}
Introducing $\alpha = \lambda^{-2}\mu$, changing 
variables, $\xi = \lambda^{1/(2m)} \eta$, and taking the minimum with respect to $\alpha > 0$, 
proves the bound  
\begin{align}
\begin{split} 
& N(\lambda; A_{K, \Omega, 2m} (a,b,q)) \leq (2 \pi)^{-n} 
\sup_{\xi\in\R^n}\|\phi(\, \cdot \,, \xi)\|^2_{L^2(\Omega)}    \\
&\quad \times \min_{\alpha > 0}
\bigg(\alpha^{-1}\int_{\bbR^n} \big[\alpha - |\eta|^{4m} + |\eta|^{2m}\big]_+ d^n \eta\bigg) \lambda^{n/(2m)} ,  \quad \lambda > 0.    \lb{3.53} 
\end{split} 
\end{align} 
Explicitly computing the minimum over $\alpha > 0$ in \eqref{3.53} yields the 
result \eqref{3.36}. This minimization step is carried out in detail in Appendix \ref{sA}. 
\end{proof}

Next, we also derive an upper bound for the eigenvalue counting function 
of the Friedrichs extension $A_{F,\Omega,2m} (a,b,q)$ of $A_{\Omega,2m} (a,b,q)$. 

\begin{theorem} \lb{t4.4}
Assume Hypothesis \ref{h4.1}. Then the following estimate holds:
\begin{align}
& N(\lambda; A_{F,\Omega,2m} (a,b,q)) \leq \frac {v_n} {(2\pi)^n}	
\bigg(1+\frac{2m}{n}\bigg)^{n/(2m)} 
\sup_{\xi\in\R^n}\|\phi(\, \cdot \,, \xi)\|^2_{L^2(\Omega)} \, \lambda^{n/(2m)},      \no \\ 
& \hspace*{8.7cm} \, \text{ for all } \, \lambda > 0,      \lb{II3.36} 
\end{align} 
with $v_n := \pi^{n/2}/\Gamma((n+2)/2)$ and  $\phi (\, \cdot \, , \, \cdot \,)$ given as in 
Theorem \ref{t4.3}. 
\end{theorem}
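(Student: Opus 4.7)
My plan is to mirror the proof of Theorem \ref{t4.3}, but bypass the buckling-problem reformulation: since we want to count all eigenvalues of the Friedrichs extension below $\lambda$ (rather than only the nonzero Krein--von Neumann eigenvalues), the natural auxiliary operator is the shift $A_{F,\Omega,2m}(a,b,q) - \lambda I_{\Omega}$ on $L^2(\Omega)$, and no recourse to Lemma \ref{l2.7} is needed. First I would record that, by Lemma \ref{l3.6}, the form domain of $A_{F,\Omega,2m}(a,b,q)$ equals $W_0^{m,2}(\Omega)$, which embeds compactly into $L^2(\Omega)$ by Rellich--Kondrachov; Lemma \ref{l2.10} then guarantees that $A_{F,\Omega,2m}(a,b,q)$ has purely discrete spectrum, so $N(\lambda;A_{F,\Omega,2m}(a,b,q))$ genuinely counts eigenvalues $\lambda_{F,\Omega,j} < \lambda$ with multiplicity.

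The next step is the standard variational/trace estimate. Letting $\{\varphi_j\}_{j\in\bbN}\subset\dom(A_{F,\Omega,2m}(a,b,q))$ be an $L^2(\Omega)$-orthonormal basis of eigenfunctions with eigenvalues $\lambda_{F,\Omega,j}$, and setting $\mu_j := \lambda_{F,\Omega,j}-\lambda$, one has for any $\mu>0$ that
\begin{equation}
N(\lambda;A_{F,\Omega,2m}(a,b,q)) = \#\{j\in\bbN : \mu_j < 0\} \leq \mu^{-1}\sum_{j\in\bbN}[\mu-\mu_j]_+.
\end{equation}
The key move is then to extend each $\varphi_j\in W_0^{m,2}(\Omega)$ by zero to $\wti\varphi_j \in W^{m,2}(\bbR^n)$ and to observe, by direct comparison of the explicit representations \eqref{m14} and \eqref{m15}, that $Q_{A_{F,\Omega,2m}(a,b,q)}(\varphi_j,\varphi_j) = Q_{\wti A_{2m}(a,b,q)}(\wti\varphi_j,\wti\varphi_j)$. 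Using Hypothesis \ref{h4.1}\,$(i)$, which iterates to $\wti A_{2m}(a,b,q) = \bbF^{-1}M_{|\xi|^{2m}}\bbF$, this identity rewrites (together with the unitarity of $\bbF$) as
\begin{equation}
\mu - \mu_j = \int_{\bbR^n}\bigl(\mu - |\xi|^{2m} + \lambda\bigr)\,|(\bbF\wti\varphi_j)(\xi)|^2\,d^n\xi.
\end{equation}

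From this point the argument will follow \eqref{3.45}--\eqref{l3.52} almost verbatim: I drop $[\,\cdot\,]_+$ pointwise inside the sum, interchange summation and integration, invoke the Parseval-type identity $\sum_{j}|(\bbF\wti\varphi_j)(\xi)|^2 = (2\pi)^{-n}\|\phi(\,\cdot\,,\xi)\|_{L^2(\Omega)}^2$ already established in \eqref{l3.51}, and majorise by $\sup_\xi\|\phi(\,\cdot\,,\xi)\|_{L^2(\Omega)}^2$ via Hypothesis \ref{h4.1}\,$(ii)$. The substitution $\xi=\lambda^{1/(2m)}\eta$, $\mu=\lambda\alpha$, then reduces the estimate to minimising $\alpha^{-1}\int_{\bbR^n}[\alpha+1-|\eta|^{2m}]_+\,d^n\eta$ over $\alpha>0$. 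Polar coordinates evaluate the integral to $v_n\tfrac{2m}{2m+n}(\alpha+1)^{(2m+n)/(2m)}$, and differentiating $\alpha^{-1}(\alpha+1)^{(2m+n)/(2m)}$ locates the minimizer at $\alpha = 2m/n$; back-substitution then produces exactly the constant $v_n(1+2m/n)^{n/(2m)}$ appearing in \eqref{II3.36}.

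The only technical point I expect to wrestle with is the equality $Q_{A_{F,\Omega,2m}(a,b,q)}(f,f) = Q_{\wti A_{2m}(a,b,q)}(\wti f,\wti f)$ for $f\in W_0^{m,2}(\Omega)$: no regularity is imposed on $\partial\Omega$, so trace theory is unavailable. The remedy is to work exclusively with the explicit integral representations \eqref{m14}, \eqref{m15}, together with the elementary fact that $f\in W_0^{m,2}(\Omega)$ extends by zero to an element of $W^{m,2}(\bbR^n)$ whose weak partial derivatives of order $\leq m$ vanish almost everywhere outside $\overline{\Omega}$; once this is in hand the integrals over $\bbR^n$ and $\Omega$ coincide automatically, and the remainder of the argument is routine distorted-Fourier calculus and scalar optimisation.
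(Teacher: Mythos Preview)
Your proposal is correct and follows essentially the same approach as the paper's own proof: introduce the shifted operator $A_{F,\Omega,2m}(a,b,q)-\lambda I_{\Omega}$, bound $N(\lambda;\cdot)$ by $\mu^{-1}\sum_j[\mu-\mu_j]_+$, pass to $\bbR^n$ via zero extension and the form identity coming from \eqref{m14}--\eqref{m15}, use the distorted Fourier diagonalization $\wti A_{2m}(a,b,q)=\bbF^{-1}M_{|\xi|^{2m}}\bbF$ together with the Parseval-type identity \eqref{l3.51}, and then perform the scalar minimization over $\alpha>0$ exactly as in \eqref{f10}--\eqref{f13}. Your identification of the one delicate point (the form equality on rough $\Omega$, handled through \eqref{m14}, \eqref{m15} and the zero-extension property of $W_0^{m,2}(\Omega)$) matches the paper's treatment in \eqref{m16}.
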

\begin{proof}
First, one notices that 
\begin{align}
\begin{split}
& N(\lambda; A_{F,\Omega,2m} (a,b,q)) \leq 
\max \big(\dim\, \big\{f \in \dom(A_{F,\Omega,2m} (a,b,q)) \,\big|   \lb{4.25} \\  
& \hspace*{3cm} 
(f, A_{F,\Omega,2m} (a,b,q)f)_{L^2(\Omega)}- \lambda \|f\|^2_{L^2(\Omega)}< 0\big\}\big),
\end{split}
\end{align}
To further analyze the right--hand side of \eqref{4.25} fix $\lambda \in (0,\infty)$ and introduce the auxiliary operator 
\begin{align} 
\begin{split}
& K_{\Omega,2m, \lambda} (a,b,q) := A_{F,\Omega,2m} (a,b,q) - \lambda I_{\Omega}, \\
& \dom(K_{\Omega,2m,\lambda} (a,b,q)) := \dom(A_{F,\Omega,2m} (a,b,q)). 
\end{split} 
\end{align}
We will study the eigenvalue problem,
\begin{equation} 
K_{\Omega,2m, \lambda} (a,b,q) \varphi_j = \mu_j \varphi_j, \quad 
\varphi_j \in \dom(K_{\Omega,m, \lambda} (a,b,q)), 
\end{equation}
where $\{\varphi_j\}_{j \in \bbN}$ represents an orthonormal basis of eigenfunctions in $L^2(\Omega)$ and for simplicity of notation we repeat the eigenvalues $\mu_j$ of 
$K_{\Omega,2m,\lambda} (a,b,q)$ according to their multiplicity. Since 
$\varphi_j \in W_0^{m}(\Omega)$, their zero-extension to all of $\bbR^n$, 
\begin{equation} 
\wti \varphi_j (x) := \begin{cases} \varphi_j(x), & x \in \Omega, \\ 0, & 
x \in \bbR^n \backslash \Omega,  \end{cases}
\end{equation} 
satisfies 
\begin{equation}  \lb{b4}
\wti \varphi_j \in W^{m} (\bbR^n), \quad 
\partial^{\alpha} \wti \varphi_j = \wti{\partial^{\alpha} \varphi_j}, \quad 0 \leq |\alpha| \leq m.
\end{equation} 
	
Next, given $\mu > 0$, one estimates
\begin{align} 
\begin{split} 
\mu^{-1} \sum_{\substack{j \in \bbN \\ \mu_j < \mu}} (\mu - \mu_j) &\geq 
\mu^{-1} \sum_{\substack{j \in \bbN, \\ \mu_j < 0, \, \mu_j < \mu}} (\mu - \mu_j) \geq 
\mu^{-1} \sum_{\substack{j \in \bbN, \\ \mu_j < 0, \, \mu_j < \mu}} \mu   \\
&= n_-(K_{\Omega, 2m, \lambda} (a,b,q)),   
\end{split} 
\end{align}
where $n_-(K_{\Omega, 2m, \lambda} (a,b,q))$ denotes the number of strictly negative eigenvalues of $K_{\Omega, 2m,\lambda} (a,b,q)$. Then one has
\begin{align}
& N(\lambda; A_{F,\Omega,2m} (a,b,q))   \no \\
& \quad \leq \max \big(\dim\, \big\{f \in \dom(A_{F,\Omega,2m} (a,b,q)) \,\big|    \no \\ 
& \qquad \;
(f, A_{F,\Omega,2m} (a,b,q) f)_{L^2(\Omega)}- \lambda \|f\|^2_{L^2(\Omega)}< 0\big\} \big)  
 \lb{3.44II}  \\ 
& \quad = n_-(K_{\Omega, 2m, \lambda} (a,b,q)) \leq 
\mu^{-1} \sum_{\substack{j \in \bbN \\ \mu_j < \mu}} (\mu - \mu_j) 
= \mu^{-1} \sum_{j \in \bbN} [\mu - \mu_j]_+, \quad \mu > 0.    \no 
\end{align} 
To estimate the right-hand side of \eqref{3.44II} we rewrite 
$(\psi_1, A_{F,\Omega,2m} (a,b,q) \psi_2)_{L^2({\Omega})}$ for 
$\psi_1,\psi_2\in \dom (A_{F,\Omega,2m} (a,b,q))$, as follows
\begin{align}\lb{m16}
& (\psi_1, A_{F,\Omega,2m} (a,b,q) \psi_2)_{L^2({\Omega})}
=Q_{A_{F,\Omega,2m} (a,b,q)}(\psi_1,\psi_2) 
= Q_{\wti A_{2m}(a,b,q)} \big(\wti \psi_1,\wti \psi_2\big)   \no \\
& \quad = \Big(\big(\wti A_{2m} (a,b,q)\big)^{1/2} \wti \psi_1, 
\big(\wti A_{2m} (a,b,q)\big)^{1/2}\wti \psi_2\Big)_{L^2(\bbR^n)},
\end{align}
the second equality in \eqref{m16} following from representations \eqref{m14}, \eqref{m15}. 
Next, we focus on estimating the right-hand side of \eqref{3.44II}. 
\begin{align}
& N(\lambda; A_{F, \Omega, 2m} (a,b,q)) \leq \mu^{-1} \sum_{j \in \bbN} (\mu - \mu_j)_+ 
= \mu^{-1} \sum_{j \in \bbN} \big[(\varphi_j,(\mu - \mu_j) \varphi_j)_{L^2(\Omega)}\big]_+ \no \\   
& \quad = \mu^{-1} \sum_{j \in \bbN} \Big[\mu \|\varphi_j\|_{L^2(\Omega)}^2  
+\lambda \|\varphi_j\|_{L^2(\Omega)}^2 
- (\varphi_j, A_{F,\Omega,2m} (a,b,q) \varphi_j)_{L^2(\Omega)}\Big]_+   \no \\ 
& \quad = \mu^{-1} \sum_{j \in \bbN} \Big[\mu \|\bbF\wti \varphi_j\|_{L^2(\bbR^n)}^2  
+\lambda \|\bbF \wti \varphi_j\|_{L^2(\bbR^n)}^2 
- \||\xi|^m\bbF\wti\varphi_j\|_{L^2(\R^n)}^2\Big]_+   \no \\
& \quad = \mu^{-1} \sum_{j \in \bbN} \bigg[\int_{\bbR^n} 
\big[\mu +\lambda-|\xi|^{2m}\big] |(\bbF{\wti \varphi}_j)(\xi)|^2 \, d^n \xi\bigg]_+   \no \\
& \quad \leq \mu^{-1} \sum_{j \in \bbN} \int_{\bbR^n} 
\big[\mu +\lambda-|\xi|^{2m}\big]_+  
|(\bbF{\wti \varphi}_j)(\xi)|^2 \, d^n \xi    \no \\
& \quad \leq \mu^{-1} \int_{\bbR^n} 
\big[\mu +\lambda-|\xi|^{2m}\big]_+  
\sum_{j \in \bbN} |(\bbF{\wti \varphi}_j)(\xi)|^2 \, d^n \xi.    \lb{II3.45} 
\end{align}
Combining \eqref{l3.51} and \eqref{II3.45} one arrives at 
\begin{align}
& N(\lambda; A_{F,\Omega,2m} (a,b,q)) \leq  \mu^{-1} \int_{\bbR^n} 
\big[\mu +\lambda-|\xi|^{2m}\big]_+  
\sum_{j \in \bbN} |(\bbF{\wti \varphi}_j)(\xi)|^2 \, d^n \xi\no\\
& \quad = (2\pi)^{-n}  \mu^{-1}\int_{\bbR^n} 
\big[\mu +\lambda-|\xi|^{2m}\big]_+  
\|\phi(\, \cdot \,, \xi)\|^2_{L^2(\Omega)} \, d^n \xi\no\\
& \quad \leq (2\pi)^{-n}\sup_{\xi\in\R^n}\|\phi(\, \cdot \,, \xi)\|^2_{L^2(\Omega)}  \mu^{-1}\int_{\bbR^n} \lb{IIl3.52}
\big[\mu +\lambda-|\xi|^{2m}\big]_+  \, d^n \xi. 
\end{align}

Introducing $\alpha = \lambda^{-1}\mu$, changing 
variables, $\xi = \lambda^{1/(2m)} \eta$, and taking the minimum with respect to $\alpha > 0$, 
proves the bound, 
\begin{align} 
\begin{split} 
& N(\lambda; A_{F, \Omega, 2m} (a,b,q)) 
\leq (2 \pi)^{-n} \sup_{\xi\in\R^n}\|\phi(\, \cdot \,, \xi)\|^2_{L^2(\Omega)}    \\
&\quad \times \min_{\alpha > 0}
\bigg(\alpha^{-1}\int_{\bbR^n} \big[\alpha+1- |\eta|^{2m}\big]_+ d^n \eta\bigg) \lambda^{n/(2m)},   
\quad \lambda > 0.    \lb{II3.53} 
\end{split} 
\end{align} 
Denoting 
\begin{equation}\lb{f10}
\cI_F(\alpha):=\alpha^{-1}\int_{\bbR^n} \big[\alpha+1- |\eta|^{2m}\big]_+ d^n \eta,
\end{equation}
one explicitly computes $\cI_F(\alpha)$ and obtains 
\begin{align}
 &\cI_F(\alpha)=\frac{2mv_n}{2m+n}\alpha^{-1}(\alpha+1)^{(2m+n)/(2m)},\lb{f11}\\
 &\cI'_F(\alpha)=\frac{nv_n}{2m+n}(\alpha+1)^{n/(2m)}\alpha^{-2}\left(\alpha-\frac{2m}{n}\right),\lb{f12}\\
 &\min_{\alpha>0} \big(\cI_F(\alpha)\big) = \cI_F(2m/n)=v_n\left(1+\frac{2m}{n}\right)^{n/(2m)}.\lb{f13}
\end{align}
Equation \eqref{f13} together with \eqref{II3.53} yields \eqref{II3.36}.
\end{proof}

\begin{remark} \lb{r4.5}
$(i)$ One notes that whenever the property 
\begin{equation}
\sup_{(x,\xi) \in \Omega \times \bbR^{n}} (|\phi(x,\xi)|) < \infty  
\end{equation} 
has been established, then 
\begin{equation} 
\sup_{\xi \in \bbR^n} \|\phi(\, \cdot \,, \xi)\|^2_{L^2(\Omega)} \leq |\Omega| 
\sup_{(x,\xi) \in \Omega \times \bbR^{n}} \big(|\phi(x,\xi)|^2\big), 
\end{equation}
explicitly exhibits the volume dependence on $\Omega$ of the right-hand sides of \eqref{3.36} 
and \eqref{II3.36}, respectively. We will briefly revisit this in Section \ref{s5}.  \\[1mm]
$(ii)$ Given two self-adjoint operators $A$, $B$ in $\cH$ bounded from below with purely 
discrete spectra such that $A \leq B$ in the sense of quadratic forms, then clearly 
$N(\lambda; B) \leq N(\lambda; A)$, $\lambda \in \bbR$; in addition, 
$N(\lambda; \alpha A) = N(\lambda / \alpha; A)$, $\alpha >0$, $\lambda \in \bbR$. Thus, since 
$a$ is real symmetric, the uniform ellipticity condition \eqref{f3} implies $a \geq \varepsilon_a I_n$, 
and hence $A_{F,\Omega,2}(a,b,q) \geq \varepsilon_a A_{F,\Omega,2}(I_n,b,q)$ assuming 
$\varepsilon_a \in (0,1]$ without loss of generality. Combining this 
with \eqref{2.24} then yields
\begin{align}
\begin{split} 
N(\lambda; A_{K,\Omega,2}(a,b,q)) &\leq N(\lambda; A_{F,\Omega,2}(a,b,q)) \leq 
N(\lambda; \varepsilon_a A_{F,\Omega,2}(I_n,b,q))    \\
&= N(\lambda / \varepsilon_a; A_{F,\Omega,2}(I_n,b,q)), \quad \lambda \in \bbR.  
\end{split} 
\end{align} 
Finally, we note that estimates of the type 
$N(\lambda; A) \leq c_A \lambda^\gamma$ for $A \geq 0$ yield lower bounds for the $j$th 
eigenvalue $\lambda_j(A)$ of the form $\lambda_j (A) \geq d_A j^{1/\gamma}$, clearly 
applicable in the context of \eqref{3.36} and \eqref{II3.36}.   
\hfill $\diamond$
\end{remark}

\begin{remark} \lb{r4.6}
As far as we know, employing the technique of the eigenfunction transform (i.e., the distorted 
Fourier transform) associated with the variable coefficient operator $\wti A_{2m} (a,b,q)$ 
(replacing the standard Fourier transform in connection with the constant coefficient case in \cite{GLMS15}) to derive the results \eqref{1.11} and \eqref{1.12} is new. 

On the other hand, the literature on eigenvalue counting function bounds in connection with arbitrary bounded open sets $\Omega \subset \bbR^n$ (or even open sets $\Omega \subset \bbR^n$ of finite Euclidean volume) is fairly extensive, originating with the seminal work by Birman--Solomyak, Rozenblum, and others. More specifically, starting around 1970, in this context of rough sets 
$\Omega$, Birman and Solomyak pioneered the leading-order Weyl asymptotics and eigenvalue counting function estimates for generalized (linear pencil) eigenvalue problems of the form $A f = \lambda B f$ for elliptic partial differential operators $A$ of order $n_A$ and lower-order differential operators $B$ of order $n_B < n_A$ and obtained great generality of the coeffcients in $A$ and $B$ 
by systematically employing a variational formulation of this generalized eigenvalue problem. The boundary conditions employed are frequently of Dirichlet type, but Neumann and Robin boundary conditions are studied as well. In particular (focusing on the Dirichlet case only), the variational form 
of the problem associated with  
\begin{equation}
\sum_{|\alpha| = |\beta| = m} D^{\alpha} \big(a_{\alpha, \beta}(x) D^{\beta} u\big)(x) 
= \lambda \, p(x) \, u(x), \quad u \in W_0^{m,2}(\Omega),    \lb{4.46}
\end{equation}
with special emphasis on the polyharmonic case, $(- \Delta)^m u = \lambda \, p \, u$, and extensions to 
the situation 
\begin{align}
\begin{split} 
\sum_{|\alpha| = |\beta| = m} D^{\alpha} \big(a_{\alpha, \beta}(x) D^{\beta} u\big)(x) = \lambda 
\sum_{0 \leq |\gamma|, |\delta| \leq m} D^{\gamma} \big(b_{\gamma, \delta}(x) D^{\delta} u\big)(x),&   \\ 
|\gamma| + |\delta| = 2 \ell, \, 0 \leq \ell < m, \; u \in W_0^{m,2}(\Omega),&   \lb{4.47}
\end{split}
\end{align}
including the scenario where $a$, $b$ are block matrices, or $b$ is an appropriate (matrix-valued) measure, were studied in \cite{BS70}--\cite{BS80}, \cite{Ro71}--\cite{Ro76}, \cite[Ch.~5]{RSS94}. 
In particular, the hypotheses on $a_{\alpha, \beta}$ are very general 
($a \in L^1_{loc}(\Omega)^{m \times m}$, $a$ positive definite a.e., 
$a^{-1} \in L^{\alpha}(\Omega)^{m \times m}$ for appropriate $\alpha \geq 1$) permitting a certain weak degeneracy of the ellipticity of the left-hand side in \eqref{4.46}, \eqref{4.47}. The case of the Friedrichs extension for $m=1$ corresponding to $\tau_2(a,b,q)$ was treated in \cite{MR96}. 

Thus, in the case $m=1$, $p(\cdot)=1$, and in some particular higher-order cases, where 
$m > 1$, in the context of $A_{F,\Omega,2m}(a,0,0)$ (i.e., $b=q=0$), there is clearly some overlap 
of our result \eqref{II3.36} with the above results concerning \eqref{4.46}. The same applies to the magnetic field results in \cite{MR96} in connection with $\tau_2(a,b,q)$. Similarly, considering the perturbed buckling problem 
in the form
\begin{equation}
(-\Delta)^{2m} u = \lambda \, (-\Delta)^m u, \quad u \in W_0^{2m,2}(\Omega), 
\end{equation}
there is of course some overlap between our result \eqref{3.36} (actually, the result in \cite{GLMS15}) and the results concerning \eqref{4.47} with $m \in \bbN$, $a=I_n$, $b=q=0$, but since lower-order terms are not explicitly included on the left-hand side of \eqref{4.47}, a direct comparison is difficult. According to G.\ Rozenblum (private communication), the left-hand sides in \eqref{4.46}, \eqref{4.47} can be extended to include also lower-order terms under appropriate hypotheses on the coefficients, but this seems not to have appeared explicitly in print.  

Since we focused on the case of nonconstant coefficients throughout, we did not enter the vast 
literature on eigenvalue counting function estimates in connection with the Laplacian and its 
(fractional) powers. In this context we refer, for instance, to \cite{FLW09}, \cite{HH11}, \cite{YY12}, 
\cite{YY14}, and the extensive literature cited therein.  
\hfill $\diamond$
\end{remark}

Although Weyl asymptotics itself is not the main objective of this paper, we conclude this section with the following observation.

\begin{remark} \lb{r4.7}
The Weyl asymptotics of $N(\, \cdot \,; A_{K, \Omega, 2} (a,b,q))$ in \cite[Sect.~8]{AGMT10} in the 
case of quasi-convex domains and in \cite{BGMM16} in the case of bounded Lipschitz domains 
derived an error bound of the form $\Oh\big(\lambda^{(n - (1/2))/2}\big)$ as $\lambda \to \infty$. If 
one is only interested in the leading-order asymptotics results, combining the spectral equivalence 
of nonzero eigenvalues of $A_{K, \Omega, 2m} (a,b,q))$ to the (generalized) buckling problem 
(cf.\ Lemma \ref{l2.5}), with results by Kozlov \cite{Ko79}--\cite{Ko84}, and taking into account that 
lower-order differential operator perturbations do not influence the leading-order asymptotics of 
$N(\, \cdot \, ;A_{K, \Omega, 2m} (a,b,q))$ (cf.\ \cite[Lemmas~1.3, 1.4]{BS72}) imply 
\begin{align}
& N(\lambda; A_{K,\Omega,2m} (a,b,q))   \no \\ 
& \quad \underset{\lambda \to \infty}{=} 
\frac{1}{n (2\pi)^n} \bigg(\int_{\Omega}d^nx\int_{|\xi|=1}d\omega_{n-1}(\xi)(\xi,a(x) \, \xi)^{-\frac{n}{2}}_{\R^n} \bigg)
 \, \lambda^{n/(2m)} + \oh\big( \lambda^{n/(2m)}\big)     \no \\
& \quad \underset{\lambda \to \infty}{=} \frac{v_n}{(2\pi)^n} \bigg( \int_{\Omega}d^nx \, 
(\det a(x))^{-1/2} \bigg)
 \, \lambda^{n/(2m)}   
+ \oh\big( \lambda^{n/(2m)}\big),    \lb{4.49}
\end{align}
for any bounded open set $\Omega \subset \bbR^n$. Here $d\omega_{n-1}$ denotes the surface measure on the unit sphere $S^{n-1}=\{\xi\in\bbR^n\,|\,|\xi|=1\}$ in $\bbR^n$. Of course, the same 
leading-order asymptotics applies to $N(\, \cdot \, ; A_{F,\Omega,2m} (a,b,q))$. 

Since $N(\lambda; A) \underset{\lambda \to \infty}{=} c(A) \lambda^{\alpha}$ is equivalent to 
$\lambda_j(A) \underset{j \to \infty}{=} (j/c(A))^{1 / \alpha}$, relation \eqref{4.49} yields the corresponding result for the eigenvalues of $A_{K,\Omega,2m} (a,b,q)$ and $A_{F,\Omega,2m} (a,b,q)$. 
\hfill $\diamond$
\end{remark}

\section{Illustrations} \lb{s5}
 
To demonstrate why we expect Hypothesis \ref{h4.1} to hold under Hypothesis \ref{h3.1} alone in the case $a = I_n$ (with the obvious exception of zero-energy resonances and eigenvalues, which generically will be absent), we discuss three exceedingly complex scenarios in this section.

We start with the most elementary case which nevertheless served as the guiding motivation for 
this paper:   

\begin{example} \lb{e5.1} Let $a:=I_n$, $n \in \bbN$, $b= q = 0$, then the operator $\bbF$ from Theorem \ref{h4.1} 
is the standard Fourier transform in $L^2(\bbR^n)$, and $\phi(\xi,x)=e^{i\xi \cdot x}$, $(\xi, x) \in \bbR^{2n}$. Thus, Hypothesis \ref{h4.1} obviously holds for $\wti A_2 (I_n,0,0) = H_0$, and 
\begin{equation}\lb{aa1}
\sup_{\xi\in \R^n}\|\phi(\, \cdot \,, \xi)\|_{L^2(\Omega)}^2 = |\Omega|. 
\end{equation}
\end{example}

In this rather special case the estimate for the eigenvalue counting function 
$N(\lambda; - \Delta_{K,\Omega})$ was previously obtained in \cite{GLMS15}, while that 
of $N(\lambda; - \Delta_{D,\Omega})$ was derived in \cite{La97}.

Next, we turn to Schr\"odinger operators in $L^2(\bbR^n)$. 

\begin{example} \lb{e5.2} Assume that $a = I_n$, $b=0$, and 
$0\leq q\in L^{\infty}(\R^n)$, $\supp (q)$ compact. In addition, suppose that zero is neither an eigenvalue nor a resonance  of $\wti A_2(I_n,b,q)$ $($cf.\ \cite{EGS09}$)$. Then 
Hypothesis \ref{h4.1} holds.

In addition, in the special case $n=3$, there exists $C(q) \in (0,\infty)$ such that 
\begin{equation}
\sup_{(x,\xi) \in \R^6}|\phi(x,\xi)| \leq C({q}).   \lb{5.2}
\end{equation}  
\end{example}

Indeed, the absence of strictly positive eigenvalues of $\wti A_2 (I_n,0,q)$ was established by 
Kato \cite{Ka59} (see also \cite{Si67}), and the existence of the distorted Fourier transform $\bbF$ and hence an eigenfunction 
transform was established by Ikebe \cite[Theorem~5]{Ik60} for $n=3$ and Thoe \cite[Sect.~4]{Th67} 
for $n \geq 4$, and Alsholm and Schmidt \cite{AS71} for $n\geq 3$ (see also 
\cite[Theorem~XI.41]{RS79}, \cite[Theorems~XIII.33 and XIII.58]{RS78}, \cite{RT15}, 
\cite[Sect.~V.4]{Si71}), implying, in particular, that 
\begin{align}
\begin{split} 
& \sigma \big(\wti A_2 (I_n,0,q)\big)=\sigma_{ac}\big(\wti A_2(I_n,0,q)\big)=[0,\infty),  \lb{5.3} \\
&\sigma_{sc}\big(\wti A_2(I_n,0,q)\big) = \sigma_{p}\big(\wti A_2(I_n,0,q)\big) \cap (0,\infty) 
= \emptyset.
\end{split}
\end{align}
Moreover, it is shown in \cite{Ik60} and \cite{Th67} that for all $R>0$, 
\begin{equation}\lb{aa2}
	\sup_{\xi\in B_n(0;R), \, x\in\R^n}|\phi(x,\xi)|=:c({q,R}) < \infty. 
\end{equation}  
Thus we will focus on proving that 
\begin{equation}\lb{5.5}
\sup_{\xi\in \R^n} \|\phi(\, \cdot \,,\xi)\|_{L^{2}(\Omega)}<\infty,    
\end{equation}
and in the special case $n=3$ that for sufficiently large $R > 0$, 
\begin{equation}\lb{aa3}
\sup_{\xi\in \R^3 \backslash B_3(0;R), \, x\in\R^3}|\phi(x,\xi)|=:C({q,R}) < \infty. 
\end{equation}  
Clearly, estimates \eqref{aa2} and \eqref{aa3} imply \eqref{5.2}. 

The distorted plane waves $\phi(\, \cdot \, , \, \cdot \,)$ can be chosen as one of 
$\phi_+(\, \cdot \, , \, \cdot \,)$  or $\phi_-(\, \cdot \, , \, \cdot \,)$, which are defined as solutions of the following Lippmann--Schwinger integral equation, 
\begin{equation}\lb{aa4}
\phi_{\pm}(x,\xi)=e^{i\xi\cdot x} 
- \int_{\bbR^n} G_n \big(|\xi|^2 \pm i 0;x,y\big) q(y)\phi_{\pm}(y,\xi) \, d^ny, 
\quad (x,\xi) \in \bbR^{2n}, 
\end{equation} 
where
\begin{align}
& G_n(z;x,y) = \begin{cases} \f{i}{4} \Big(\f{2\pi |x-y|}{z^{1/2}}\Big)^{(2-n)/2} 
H^{(1)}_{(n-2)/2}\big(z^{1/2}|x-y|\big), & n\ge 2, \; z\in\bbC\backslash\{0\}, \\
\f{-1}{2\pi} \ln(|x-y|), & n=2, \; z=0, \\
\f{1}{(n-2) \omega_{n-1}} |x-y|^{2-n}, & n \ge 3, \; z=0, 
\end{cases}    \no \\
& \hspace*{6.15cm}   \Im\big(z^{1/2}\big)\geq 0, \; x, y \in\bbR^n, \, x \neq y, 
\end{align}
represents the fundamental solution of the Helmholtz equation $(-\Delta -z)\psi(z;\dott) =0$ in 
$\bbR^n$, that is, the Green's function of the $n$-dimensional Laplacian, $n\in\bbN$, $n\ge 2$. 
Here $H^{(1)}_{\nu}(\dott)$ denotes the Hankel function of the first kind 
with index $\nu\geq 0$ (cf.\ \cite[Sect.\ 9.1]{AS72}) and 
$\omega_{n-1}=2\pi^{n/2}/\Gamma(n/2)$ ($\Gamma(\dott)$ the Gamma function, 
cf.\ \cite[Sect.\ 6.1]{AS72}) represents the volume of the unit sphere 
$S^{n-1}$ in $\bbR^n$.   
For simplicity we focus on $n \geq 3$ for the rest of this example, but note that the cases $n=1,2$ 
can be treated exactly along the same lines (see, e.g., the results in \cite{BGD88}--\cite{Ch84}).
  
Multiplying both sides of this equation by the weight $w > 0$ satisfying 
\begin{align} 
\begin{split}
& w \in C^{\infty}(\bbR^n), \quad 0 < w \leq 1, \quad w(x):= \begin{cases} 1, & 0 \leq |x| \leq R, \\
\exp(-|x|^2), & |x| \geq 2R, \end{cases} \\ 
& \Omega \subset B_n(0; R), 
\end{split} 
\end{align} 
for some $R>0$, \eqref{aa4} can be written as follows
\begin{align}\lb{aa5}
\begin{split} 
\Phi_{\pm}(x,\xi)=\Phi_0(x,\xi) - \int_{\bbR^n} w(x)G_n \big(|\xi|^2 \pm i 0;x,y\big) 
w(y) \frac{q(y)}{w^{2}(y)} \Phi_{\pm}(y,\xi) \, d^3y,&    \\
(x,\xi) \in \bbR^{2n},& 
\end{split} 
\end{align} 
where 
\begin{equation}\lb{aa15}
\Phi_{\pm}(x,\xi):= w(x) \phi_{\pm}(x,\xi), \quad  \Phi_0(x,\xi):= w(x) e^{i\xi\cdot x}, 
\quad (x,\xi) \in \bbR^{2n}. 
\end{equation} 
In this form \eqref{aa5} becomes an integral equation in $L^2(\R^n)$ since 
$\Phi_0(\, \cdot \,,\xi)\in L^2(\R^n)$. In fact, \eqref{aa5} will be viewed in $L^2(\bbR^n)$ as 
\begin{equation}\lb{aa6}
\Phi_{\pm}(\, \cdot \,,\xi) = \Phi_0(\, \cdot \,,\xi) + K_{\pm}(\xi) M_{q/w^2} 
\Phi_{\pm}(\, \cdot \,,\xi), \quad \xi \in \bbR^n, 
\end{equation} 
or equivalently, as 
\begin{equation}\lb{aa7}
[I_{L^2(\R^n,d^n x)} - K_{\pm}(\xi) M_{q/w^2}]\Phi_{\pm}(\, \cdot \,,\xi) 
= \Phi_0(\, \cdot \,,\xi), \quad \xi \in \bbR^n, 
\end{equation}
where we introduced the Birman--Schwinger-type operator $K_{\pm}(\xi)$, $\xi \in \bbR^n$, in 
$L^2(\bbR^n)$, 
\begin{align} 
& K_{\pm}(\xi) \in \cB\big(L^2(\R^n)\big),    \no \\
&(K_{\pm}(\xi)f)(x):= - \int_{\bbR^n} w(x) G_n\big(|\xi|^2 \pm i 0;x,y\big) w(y) f(y,\xi) \, d^n y,   
 \lb{aa8} \\ 
& \hspace*{5.4cm} f \in L^2(\bbR^n), \; (x,\xi) \in \bbR^{2n},   \no 
\end{align} 
and the operator of multiplication by the function $q/w^2$, $M_{q/w^2}$ in $L^2(\bbR^n)$, 
\begin{equation}
M_{q/w^2} \in \cB\big(L^2(\R^n)\big), \quad 
(M_{q/w^2} f)(x):= q(x) w(x)^{-2} f(x), \quad f \in L^2(\bbR^n), \; x \in \bbR^n.    \lb{aa9}
\end{equation} 
One recalls from \cite[Sect.~V.4]{Si71} for $n =3$ and \cite{Fa71} for $n \geq 3$ (the case $n=2$ 
being analogous) that 
\begin{equation} 
\|K_{\pm}(\xi)\|_{\cB(L^2(\R^n))}\underset{|\xi| \to \infty}{\longrightarrow} 0,
\end{equation}  
and hence, 
\begin{align}
&\|\Phi_{\pm}(\, \cdot \,,\xi) - \Phi_0(\, \cdot \,,\xi)\|_{L^2(\R^n)}     \no \\
& \quad = \big\|\left(I_{L^2(\R^n)}-(I_{L^2(\R^n)} - K_{\pm}(\xi) M_{q/w^2}))^{-1}\right) 
\Phi_0(\, \cdot \,,\xi)\big\|_{L^2(\R^n)}    \no\\
&\quad\leq \|w(\cdot)\|_{L^2(\R^n)} \big\|I_{L^2(\R^n)}-(I_{L^2(\R^n)} 
- K_{\pm}(\xi) M_{q/w^2}))^{-1}\big\|_{\cB(L^2(\R^n))}      \no \\ 
& \, \underset{|\xi|\rightarrow \infty }{=}\oh(1),    
\end{align}
implying, 
\begin{equation}\lb{aa11}
\|\Phi_{\pm}(\, \cdot \,,\xi)\|_{L^2(\R^n)} \underset{|\xi|\rightarrow \infty}{=} \Oh(1), 
\end{equation}
and hence \eqref{5.5}.

In the special case $n=3$, where 
\begin{equation} 
G_3(z;x,y) = (4 \pi |x-y|)^{-1} e^{i z^{1/2} |x-y|}, \quad \Im\big(z^{1/2}\big)\geq 0, \; 
x, y \in \bbR^3, \, x \neq y, 
\end{equation} 
one can easily go one step further: Using the Cauchy--Schwarz inequality, \eqref{aa11}, and 
the fact that $q$ has compact support, one estimates the second term in \eqref{aa5} as follows, 
\begin{align}
&\bigg|\int_{\bbR^3}\frac{w(x)e^{\pm i|\xi||x-y|}w(y)}{4 \pi |x-y|}\frac{q(y)}{w^{2}(y)} 
\Phi_{\pm}(y,\xi) \, d^3 y\bigg|  
\no \\
&\quad \leq (4 \pi)^{-1} w(x)\int_{\supp (q)}\frac{w(y)}{|x-y|}\frac{q(y)}{w^{2}(y)} 
|\Phi_{\pm}(y,\xi)| \, d^3 y   \no \\
& \quad \leq (4 \pi)^{-1} w(x)\|qw^{-2}\|_{L^{\infty}(\R^3)} 
\bigg(\int_{\supp (q)}\frac{w^2(y)}{|x-y|^2} \, d^n y\bigg)^{1/2} 
\|\Phi_{\pm}(\, \cdot \,,\xi)\|_{L^2(\R^3)}  \no \\
& \, \underset{ |\xi|\rightarrow \infty}{=} w(x)\Oh(1),  \quad x \in \bbR^3,  \lb{aa14}
\end{align}
with the $\Oh(1)$-term bounded uniformly in $(x, \xi) \in\R^{6}$. Combining \eqref{aa15}, 
\eqref{aa6}, and \eqref{aa14} one obtains 
\begin{equation} 
\sup_{x\in\R^3}|\phi_{\pm}(x,\xi)| \underset{|\xi| \to \infty}{=} \Oh(1),   \lb{5.17} 
\end{equation}
proving \eqref{aa2} since $\phi_{\pm}$ is continuous on $\bbR^{6}$ (see, e.g., 
\cite[Sect.~4]{Ik60}, \cite[Sect.~3]{Th67}). \hfill $\square$

\begin{example} \lb{e5.3} 
Assume that $n\in\bbN$, $a=I_n,\ b\in \big[W^{1,\infty}(\R^n)\big]^n$, $\supp (b)$ compact, 
$0\leq q\in L^{\infty}(\R^n)$, $\supp (q)$ compact. In addition, suppose that zero is neither an eigenvalue nor a resonance  of $\wti A_2(I_n,b,q)$ $($cf.\ \cite{EGS09}$)$. Then Hypothesis \ref{h4.1} holds.
\end{example}

We start verifying this claim by noting that under these assumptions on $a,b,q$, $\wti A_2(I_n,b,q)$ 
has empty singular continuous spectrum and no strictly positive eigenvalues, see, for instance, 
Erdogan, Goldberg, and Schlag \cite{EGS08}, \cite{EGS09}, Ikebe and Sait{\=o} \cite{IS72}, (see also, \cite{AZ15}, \cite{BA10}, \cite{Fa09}, \cite{Ga15}, \cite{KT09}, \cite{St90}); in particular, the analog of \eqref{5.3} holds for $\wti A_2(I_n,b,q)$.

Next, we recall the unperturbed operator $H_0:=-\Delta$, $\dom (H_0)=W^{2,2}(\R^n)$, and introduce the first-order perturbation term, 
\begin{equation}\lb{e1n}
L_1 f =2i \sum_{k=1}^{n} b_k\partial_k f + (i \, \text{div}(b)+|b|^2+q)f,   \quad  
f \in \dom(L_1) = W^{1,2}(\bbR^n).	
\end{equation} 
We denote the distorted plane waves associated with $\wti A_2 (I_n,b,q)$ by 
$\phi(\, \cdot \,,\, \cdot \,)$, and abbreviate
\begin{equation}
	\phi_0(x,\xi):=e^{i\xi\cdot x}, \quad (x,\xi) \in \R^{2n}.
\end{equation} 

In the following we will show that
\begin{equation}\lb{kk11n}
\sup_{\xi\in \R^n} \|\phi(\, \cdot \,,\xi)\|_{L^{2}(\Omega)}<\infty.   
\end{equation}
To this end, we employ \cite[Theorem 1.2]{EGS09} (see also \cite[Theorem~2]{EGS08}) with 
$\alpha=0$, $\sigma=1$ and infer 
\begin{equation}\lb{egs}
K:=\sup_{|\xi| \geq 0} \, \big(\langle |\xi| \rangle \big\|\langle \, \cdot \, \rangle^{-2} 
\big(\wti A_2 (I_n,b,q)-(|\xi|^2 \pm i0)\big)^{-1}\langle \, \cdot \, \rangle^{-2}\big\|_{\cB(L^2(\R^n))}\big) 
< \infty,
\end{equation}
abbreviating $\langle \, \cdot \, \rangle:=\big[1+(\, \cdot \,)^2\big]^{1/2}$.

The distorted plane wave $\phi(\, \cdot \,, \, \cdot \,)$ can again be chosen as one of 
$\phi_+(\, \cdot \,, \, \cdot \,)$ or $\phi_-(\, \cdot \,, \, \cdot \,)$ and be decomposed in the form 
\begin{equation}\lb{aaa3}
\phi_{\pm}(x,\xi) = \phi_0(x,\xi) + \psi_{\pm}(x,\xi), \quad (x,\xi) \in \bbR^{2n}, 
\end{equation}
where
\begin{equation}\lb{aaa4}
\psi_{\pm}(x,\xi) := - \big(\big(\wti A_2 (I_n,b,q)-(|\xi|^2 \pm i0)\big)^{-1} (L_1 \phi_0)\big)(x,\xi), 
\quad (x,\xi) \in \bbR^{2n}.  
\end{equation}
(In this context we recall that 
\begin{align} 
\begin{split} 
|\xi|^2 \phi_{\pm}(x,\xi) &= \big(\wti A_2 (I_n,b,q) \phi_{\pm}\big)(x,\xi)   \\
&= |\xi|^2 \phi_0(x,\xi) + (L_1 \phi_0)(x,\xi) + \big(\wti A_2 (I_n,b,q) \psi_{\pm}\big)(x,\xi), 
\end{split} 
\end{align}
or equivalently, 
\begin{equation}
- (L_1 \phi_0)(x,\xi) = \big(\big(\wti A_2(I_n,b,q)) - |\xi|^2\big) \psi_{\pm}\big)(x,\xi), 
\end{equation}
in the sense of distributions, illustrating \eqref{aaa4}.)

One then infers  
\begin{align}
&\|\psi_{\pm}(\, \cdot \,, \xi)\|_{L^2(\Omega)}   \no \\
& \quad = \big\|\chi_{\Omega}\langle \, \cdot \, \rangle^{2}\langle \, \cdot \, \rangle^{-2} 
\big(\wti A_2 (I_n,b,q)-(|\xi|^2 \pm i0)\big)^{-1} \langle \, \cdot \, \rangle^{-2} 
\langle \, \cdot \, \rangle^{2} (L_1 \phi_0)\big\|_{L^2(\R^n)}   \no \\
&\quad\leq \big\|\chi_{\Omega}\langle \, \cdot \, \rangle^{2}\big\|_{L^{\infty}(\R^n)} \, 
\big\|\langle \, \cdot \,\rangle^{-2} \big(\wti A_2 (I_n,b,q)-(|\xi|^2 \pm i0)\big)^{-1} 
\langle \, \cdot \, \rangle^{-2}\big\|_{\cB(L^2(\R^n))}     \no \\
&\qquad \times \big\|\langle \, \cdot \, \rangle^{2} (L_1 \phi_0)\big\|_{L^2(\R^n)}.	 \lb{525new} 
\end{align}
Employing \eqref{egs}, the fact that $\Omega$ is bounded, and that the coefficients of $L_1$ have 
compact support (cf.\ \eqref{e1n}), one concludes 
\begin{align}
&\big\|\langle \, \cdot \,\rangle^{-2} \big(\wti A_2 (I_n,b,q)-(|\xi|^2 \pm i0)\big)^{-1} 
\langle \, \cdot \, \rangle^{-2}\big\|_{\cB(L^2(\R^n))} \big\|\langle \, \cdot \, \rangle^{2} (L_1 \phi_0)\big\|_{L^2(\R^n)}     \no \\
&\quad \leq K\langle \, |\xi| \, \rangle^{-1}\big\|\langle \, \cdot \, \rangle^2\phi_0(\, \cdot\, ,\xi)
\big( -2b\cdot\xi + i \, \text{div}(b)+|b|^2+q\big)\big\|_{L^2(\R^n)}\no\\
&\quad \leq 2K|\xi|\langle \, |\xi| \, \rangle^{-1} \big\|\langle \, \cdot \, \rangle^2 b\big\|_{[L^2(\R^n)]^n} 
+ K\langle \, |\xi| \, \rangle^{-1} \big\|\langle \, \cdot \, \rangle^2\left(i \, \text{div}(b)+|b|^2+q\right)\big\|_{L^2(\R^n)} \no\\
&\, \underset{|\xi| \to \infty}{=} \Oh(1).     \lb{526new}
\end{align}
Combining \eqref{525new} and \eqref{526new} one obtains the required estimate \eqref{kk11n}. 
\hfill $\square$

\appendix
\section{A Minimization Problem} 
\lb{sA}
\renewcommand{\theequation}{A.\arabic{equation}}
\renewcommand{\thetheorem}{A.\arabic{theorem}}
\setcounter{theorem}{0} \setcounter{equation}{0}

In this appendix we carry out the explicit minimization in $\alpha$ for $\alpha > 0$ 
of the integral 
\begin{equation}
\cI_K(\alpha) := \alpha^{-1}\int_{\bbR^n} \big[\alpha - |\eta|^{4m} + |\eta|^{2m}\big]_+ \, d^n \eta. 
\end{equation}   
Since the integral is only over the region of $n$-space where $\alpha - |\eta|^{4m} + |\eta|^{2m}$ 
is positive, and this function is radial, our problem immediately reduces to the minimization of 
$\alpha^{-1}$ times a 
radial integral in $r=|\eta|.$  Since the function $r^{4m}-r^{2m}=r^{2m}(r^{2m}-1)$ is 
negative on $0< r <1$ and is positive and increasing for $r > 1,$ for $\alpha > 0$ 
the relation $\alpha=r^{4m}-r^{2m}$ implicitly determines a unique value $r_\alpha>1,$ 
with $r_\alpha^{2m}$ given explicitly by 
\begin{equation}
r_\alpha^{2m}=\dfrac12+\Big(\alpha+\dfrac14 \Big)^{1/2}.  \lb{A.1}
\end{equation} 
It is clear that the value of $r_\alpha$ is a strictly increasing function of $\alpha$ and runs from 
1 to $\infty$ as $\alpha$ runs from 0 to $\infty.$  

By the reductions mentioned above, one obtains  
\begin{equation}
\cI_K(\alpha) 
= n v_n \alpha^{-1} \int_0^{r_\alpha} \, [\alpha + r^{2m}-r^{4m}] \, r^{n-1} \, dr,   \lb{A.2} 
\end{equation}
where $v_n$ is the volume of the ball of unit radius in $\bbR^n$ as mentioned with \eqref{3.36}.  Since 
the $v_n$ here is included explicitly in \eqref{3.36}, to prove \eqref{3.36}, in what remains we will show 
that the function $f_{n,m}(\alpha)$ defined by 
\begin{equation}
f_{n,m}(\alpha) := n \alpha^{-1} \int_0^{r_\alpha} \, [\alpha + r^{2m}-r^{4m}] \, r^{n-1} \, dr  \lb{A.3}
\end{equation}
has minimum given by 
\begin{equation}
\wti{f}_{n,m} := \Big(1+\dfrac{2m}{n+2m} \Big)^{n/(2m)},  \quad m, n \in \bbN.   \lb{A.4}
\end{equation}

By integrating \eqref{A.3}, it is easy to see that 
\begin{equation} 
f_{n,m}(\alpha) := n \alpha^{-1}  \bigg[\dfrac{\alpha r_\alpha^n}{n}+\dfrac{r_\alpha^{n+2m}}{n+2m}-\dfrac{r_\alpha^{n+4m}}{n+4m} \bigg].   \lb{A.5} 
\end{equation}
Replacing the explicit $\alpha$ appearing inside the square brackets here 
using $\alpha=r_\alpha^{4m}-r_\alpha^{2m}$ and simplifying, one finds  
\begin{equation}
\alpha \, f_{n,m}(\alpha)=\dfrac{4m \, r_\alpha^{n+4m}}{n+4m}-\dfrac{2m \, r_\alpha^{n+2m}}{n+2m}.   \lb{A.6}
\end{equation} 
We shall have need of this expression shortly.  

Next, some further properties of $f_{n,m}$ and its derivative will be developed. One has  
\begin{equation} 
\alpha \, f_{n,m}(\alpha)=n \int_0^{r_\alpha} \, [\alpha+r^{2m}-r^{4m}] \, r^{n-1} \, dr,    \lb{A.7} 
\end{equation} 
and therefore, by Leibniz's rule,  
\begin{align}
[\alpha \, f_{n,m}(\alpha)]' =& n [\alpha+r_\alpha^{2m}-r_\alpha^{4m}] \, r_\alpha^{n-1} \, r'_\alpha + n \int_0^{r_\alpha} \, r^{n-1} \, dr = r_\alpha^n,  \lb{A.8} 
\end{align}
with the simplification in the last step occurring due to the implicit relation defining $r_\alpha.$  
From \eqref{A.8} it follows that 
\begin{equation}
\alpha \, f_{n,m}'(\alpha)=r_\alpha^n-f_{n,m}(\alpha),   \lb{A.9} 
\end{equation}
and hence, using \eqref{A.6} and $\alpha=r_\alpha^{4m}-r_\alpha^{2m}$, that 
\begin{align}
\alpha^2 f'_{n,m}(\alpha)=&\alpha \, r_\alpha^n - \bigg[\dfrac{4m \, r_\alpha^{n+4m}}{n+4m}-\dfrac{2m \, r_\alpha^{n+2m}}{n+2m} \bigg]  \no \\
=& n \bigg(r_\alpha^{2m}- \dfrac{n+4m}{n+2m} \bigg) \, \dfrac{r_\alpha^{n+2m}}{n+4m}.  \lb{A.10} 
\end{align} 
It is now clear that $f_{n,m}(\alpha)$ has a minimum on $\alpha \in (0,\infty),$ and that it occurs at 
\begin{equation}
r_\alpha^{2m}=\dfrac{n+4m}{n+2m} := \wti{r}_\alpha^{2m}   \lb{A.11} 
\end{equation}
(one notes that this value is clearly larger than 1, and hence corresponds to an $\alpha > 0$).  
The corresponding value of $\alpha,$ denoted by $\wti{\alpha},$ may then be computed as 
\begin{equation}
\wti{\alpha}=\wti{r}_\alpha^{2m}(\wti{r}_\alpha^{2m}-1)=\dfrac{n+4m}{n+2m} \dfrac{2m}{n+2m}=\dfrac{2m(n+4m)}{(n+2m)^2}.   \lb{A.12} 
\end{equation} 

Finally one computes $\wti{f}_{n,m}$ using \eqref{A.6}, \eqref{A.11}, and \eqref{A.12}, which leads to 
\begin{equation}
\wti{f}_{n,m}=f_{n,m}(\wti{\alpha})=\Big(\dfrac{n+4m}{n+2m} \Big)^{n/(2m)}  
= \Big(1+\dfrac{2m}{n+2m} \Big)^{n/(2m)},  \quad m, n \in \bbN,    \lb{A.13} 
\end{equation}
in accordance with our statement above.  This completes the proof of Theorem \ref{t4.3}.  

We conclude with some remarks comparing the constant $\wti{f}_{n,m}$ found here with the corresponding constants 
$\wti{g}_{n,m}$ (our notation) found by Laptev in \cite{La97} (the comparison is most apt if we restrict our attention to the 
case of the Laplacian (i.e., $a = I_n$, $b=q=0$), as that is the main case considered by Laptev \cite{La97}).  Laptev's $\wti{g}_{n,m}$ are given by 
\begin{equation}
\wti{g}_{n,m}=\Big(1+\dfrac{2m}{n} \Big)^{n/(2m)}, \quad m, n \in \bbN.   \lb{A.14} 
\end{equation}
It is clear from these expressions that 
\begin{equation}
\wti{f}_{n,m}<\wti{g}_{n,m}, \quad m, n \in \bbN.    \lb{A.15}
\end{equation} 
This shows that the bound given in Theorem \ref{t4.3} 
is always better than the bound \eqref{2.24} combined with the earlier work of Laptev \cite{La97}. 
Of course in the large $n$ limit (for fixed $m$) both constants become arbitrarily close, since 
the limit of either $\wti{g}_{n,m}$ or $\wti{f}_{n,m}$ as $ n \to \infty$ is $e\approx 2.71828.$  
On the other hand, in the large $m$ limit (with $n$ fixed) both constants go to 1 from above (with 1 being the best possible value of the constant that could be obtained in our upper bounds, at least 
in the case of the Laplacian).  

In fact, it is generally true that 
\begin{equation}
1 < \wti{f}_{n,m} < \wti{g}_{n,m} < e, \quad m, n \in \bbN,   \lb{A.16}
\end{equation} 
that is, that 
\begin{equation} 
1 < (1+2m/(n+2m))^{n/2m} < (1 + (2m/n))^{n/(2m)} < e, \quad m, n \in \bbN,  \lb{A.17} 
\end{equation} 
with 1 and $e$ being the best possible lower and upper bounds for both $\wti{f}_{n,m}$ and 
$\wti{g}_{n,m}$ for all $m, n \in \bbN$.  These claims can be proved using 
elementary calculus by focusing on the functions $G(x):=(\ln(1+x))/x$ and $F(x):=(\ln(1+x/(1+x)))/x$ for $x>0$ (note that with the 
identification $x=2m/n$ these are the logarithms of $\wti{g}_{n,m}$ and $\wti{f}_{n,m},$ respectively, and that all $x>0$ can be 
approximated arbitrarily closely by such ratios for $m, n \in \bbN$).  In fact, one can show that the functions $G(x)$ 
and $F(x)$ are both strictly decreasing on $(0,\infty),$ with limiting value $1$ as $x \to 0^+,$ and with limiting value $0$ as $x \to \infty.$  This implies, in particular, that in all upper bound formulas 
for counting functions $N(\cdot)$ in this paper the bound would continue to hold (as a strict inequality) if the constant represented by $(1+2m/(n+2m))^{n/2m}$ were replaced by the value $e.$  

\medskip

\noindent
{\bf Acknowledgments.} We are indebted to Tanya Christiansen for discussions and to Grigori Rozenblum and Wilhelm Schlag for very helpful correspondence. 

 
\end{document}